\documentclass{amsart}

\usepackage{amssymb}
\usepackage{amsmath}
\usepackage{amscd}
\usepackage{graphicx}
\usepackage{epstopdf}
\usepackage{epsfig,pinlabel}
\usepackage{lpic}

\usepackage{parskip}
\setlength{\parindent}{15pt}

\newtheorem{thm}{Theorem}[section]
\newtheorem{prop}[thm]{Proposition}
\newtheorem{lem}[thm]{Lemma}
\newtheorem{cor}[thm]{Corollary}

\theoremstyle{definition}
\newtheorem{definition}[thm]{Definition}

\theoremstyle{remark}

\newcommand{\Z}{\mathbf{Z}}  %Integers.
\newcommand{\G}{\Gamma}  %group notation
\newcommand{\I}{\mathcal{I}}  %torelli
\newcommand{\K}{\mathcal{K}}  %Johnson kernel
\newcommand{\N}{\mathcal{N}}
\newcommand{\el}{\mathcal{L}}
\newcommand{\F}{\mathbf{F}}

\begin{document}

\title{Two Mod-p Johnson Filtrations}

\author{James Cooper}
\address{Rice University}
\email{jmc10@rice.edu}
\urladdr{math.rice.edu/~jmc10}

\begin{abstract}
We consider two mod-p central series of the free group given by Stallings and Zassenhaus. Applying these series to definitions of Dennis Johnson's filtration of the mapping class group we obtain two mod-p Johnson filtrations. Further, we adapt the definition of the Johnson homomorphisms to obtain mod-p Johnson homomorphisms.

We calculate the image of the first of these homomorphisms. We give generators for the kernels of these homomorphisms as well. We restrict the range of our mod-p Johnson homomorphisms using work of Morita. We finally prove the announced result of Perron that a rational homology 3-sphere may be given as a Heegaard splitting with gluing map coming from certain members of our mod-p Johnson filtrations.
\end{abstract}

\maketitle

%%%%%%%%%%%%%%%%%%%%%%%%%%%%%%%%%%%%%%%%%%%%%%%%%%%%
%%%%%%%%%%%%%%%%%%%%%%%%%%%%%%%%%%%%%%%%%%%%%%%%%%%%
\section{Introduction}

The mapping class group $Mod_{g,1}$ of an orientable surface of genus 
$g$ with 1 boundary component $\Sigma_{g,1}$ is the quotient $Homeo^+(\Sigma_{g,1},\partial\Sigma_{g,1})/Homeo_0^+(\Sigma_{g,1})$. That is, the group of orientation preserving 
homeomorphisms of $\Sigma_{g,1}$ fixing the boundary $\partial\Sigma_{g,1}$ modulo isotopy relative 
to $\partial\Sigma_{g,1}$. In the 1980s Johnson introduced the Johnson filtration of the mapping class group and 
the Johnson homomorphisms at each  level of the filtration. We introduce mod-$p$ analogues of Johnson's filtration
and homomorphisms. We prove results about these mod-$p$ versions that are analogous to several of Johnson's foundational results. For a survey of Johnson's work see 
\cite{Johnson1}, for instance. We also adapt certain results of Morita and Pitsch regarding the Johnson filtration to our mod-$p$ versions.

\subsection{Central series} A mapping class class $f \in Mod_{g,1}$ induces a group homomorphism $f_*$ on $\G = \pi_1(\Sigma_{g,1})$.
Having a single boundary component allows us 
to choose our base point to be on the boundary, which is fixed by $f$, making $f_*$ well-defined. If 
$N\triangleleft\G$ is a normal subgroup such that the mapping class $f$ leaves $N$ invariant (i.e., $f_*(N) < N$) 
then $f_*$ descends to a homomorphism on the quotient $\G/N$. Johnson studies such induced maps on quotients by terms of the lower central series of $\G$. Letting $[-,-]$ 
denote the commutator bracket we may define the lower central series $\{G_k\}$ of a group $G$.

\begin{definition}
The lower central series $\{G_k\}$ of a group $G$ is defined recursively by $G_1 = G$ and $G_k = [G,G_{k-1}]$.
\end{definition}

We consider two other series. Fix an odd prime $p$. The first series 
$\{\G_k^Z\}$ is defined by Hans Zassenhaus in terms of the lower central series $\{\G_k\}$ of $\G$.

\begin{definition}[Zassenhaus, {\cite{Zassenhaus}}]
The Zassenhaus mod-p central series $\{\G_k^Z\}$ of a group $\G$ is defined by $\G_k^Z = \prod\limits_{ip^j\geq k}(\G_i)^{{p}^j}$.
\end{definition}

\noindent The Zassenhaus mod-$p$ central series is the fastest descending series such that:
\begin{itemize}
\item $[\G_k^Z,\G_l^Z] < \G_{k+l}^Z$
\item $(\G_k^Z)^p < \G_{pk}^Z$
\end{itemize}

The second series $\{\G_k^S\}$ is defined by John Stallings recursively, much like the lower central series.

\begin{definition}[Stallings, {\cite{Stallings}}]
The Stallings mod-p central series $\{\G_k^S\}$ of a group $\G$ is defined by $\G_1^S = \G$ and $\G_k^S = [\G,\G_{k-1}^S](\G_{k-1}^S)^p$.
\end{definition}

\noindent The Stallings mod-$p$ central series is the fastest descending central series such that:
\begin{itemize}
\item $[\G_k^S,\G_l^S] < \G_{k+l}^S$
\item $(\G_k^S)^p < \G_{k+1}^S$
\end{itemize}

Note that whereas the successive quotients $\el_k$ of the lower central series are free abelian groups, the successive 
quotients of our series are $\F_p$-vector spaces. In this way each of these series can be interpreted as a 
type of mod-p lower central series. We discuss these series more in Section \ref{Section:Series}

\subsection{Johnson filtrations} Each subgroup $\G_k$ in the lower central series of $\G$ is a characteristic 
subgroup (i.e., every $f\in Aut(\G)$ satisfies $f(\G_k) < \G_k$). We therefore have that each
mapping class $f$ induces an automorphism of the quotient $\N_k = \G/\G_{k+1}$ for each $k$. 
Johnson filters $Mod_{g,1}$ by considering for which $k$ the map $f_k$ is trivial.

\begin{definition}
The Johnson filtration is the filtration $\{\I_{g,1}(k)\}$ of $Mod_{g,1}$ where:
\[
\I_{g,1}(k) = \{f\in Mod_{g,1} | f(x) \equiv x\ \text{mod}\ \G_{k+1}\text{ for all } x \in \G\}.
\]
\end{definition}

In the same way that Johnson uses the lower central series to define the filtration $\{\I_{g,1}(k)\}$ we use our 
two mod-p central series to define two filtrations $\{\I_{g,1}^Z(k)\}$ and $\{\I_{g,1}^S(k)\}$ of $Mod_{g,1}$.

\begin{definition}
The Zassenhaus mod-p Johnson filtration is the filtration $\{\I_{g,1}^Z(k)\}$ of $Mod_{g,1}$ where:
\[
\I_{g,1}^Z(k) = \{f\in Mod_{g,1} | f(x) \equiv x\ \text{mod}\ \G_{k+1}^Z\text{ for all } x \in \G\}.
\]
\end{definition}

\begin{definition}
The Stallings mod-p Johnson filtration is the filtration $\{\I_{g,1}^S(k)\}$ of $Mod_{g,1}$ where:
\[
\I_{g,1}^S(k) = \{f\in Mod_{g,1} | f(x) \equiv x\ \text{mod}\ \G_{k+1}^S\text{ for all } x \in \G\}.
\]
\end{definition}

The first member of the Johnson filtration $\I_{g,1}(1)$ is the Torelli group $\I_{g,1}$, 
which is the subgroup of $Mod_{g,1}$ that acts trivially on $H = H_1(\Sigma_{g,1};\Z)$. We note one property in 
particular: $\I_{g,1} < Mod_{g,1}$ is of infinite index.

The first term of each of the mod-p filtrations is the level-p congruence subgroup 
$Mod_{g,1}[p] = \I_{g,1}^Z(1) = \I_{g,1}^S(1)$, which is the subgroup of $Mod_{g,1}$ consisting of mapping 
classes that act trivially on $H_1(\Sigma_{g,1}; \F_p)$. Unlike $\I_{g,1}$, the subgroup 
$Mod_{g,1}[p]$ is finite index in $Mod_{g,1}$.

\subsection{Johnson homomorphisms} The Johnson filtration of $Mod_{g,1}$ is a filtration by subgroups 
consisting of mapping classes that act trivially on the quotients $\N_k$. Equivalently, $f \in \I_{g,1}(k)$ if and 
only if  $f(x)x^{-1} \in \G_{k+1}$ for all $x \in \G$. Let $[x]$ denote the homology class of $x$ in $H$.
Let $\el_{k} = \G_{k}/\G_{k+1}$. For fixed $f\in \I_{g,1}(k)$ Johnson proved that the map
\begin{align*}
 H &\longrightarrow \el_{k+1}\\
 [x] &\longmapsto f(x)x^{-1}\text{ mod }\G_{k+2}
\end{align*}
\noindent is a well-defined homomorphism.

\begin{definition}
The Johnson homomorphisms $\tau_k:\I_{g,1}(k) \longrightarrow Hom(H, \el_{k+1})$ are defined for $f\in\I_{g,1}(k)$ and $x\in\G$ by:
\[
\tau_k(f)([x]) = f(x)x^{-1}\ \text{mod}\ \G_{k+2}
\]
\end{definition}

\noindent Johnson shows these are well-defined homomorphisms. For $* = S$ or $Z$ let $\el_{k}^*$ denote $\G_{k}^*/\G_{k+1}^*$.
Our mod-p Johnson homomorphisms are defined as follows:

\begin{definition}
The Zassenhaus mod-$p$ Johnson homomorphisms $\tau_k^Z:\I_{g,1}^Z(k) \longrightarrow Hom(H_1(\Sigma_{g,1}; \F_p), \el_{k+1}^Z)$ 
are defined for $f\in\I_{g,1}^Z(k)$ and $x\in\G$ by:
\[
\tau_k^Z(f)([x]) = f(x)x^{-1}\ \text{mod}\ \G_{k+2}^Z,
\]
\noindent where $[x]$ denotes the class of $x$ in $H_1(\Sigma_{g,1}; \F_p)$.
\end{definition}

\begin{definition}
The Stallings mod-$p$ Johnson homomorphisms $\tau_k^S:\I_{g,1}^S(k) \longrightarrow Hom(H_1(\Sigma_{g,1}; \F_p), \el_{k+1}^S)$ 
are defined for $f\in\I_{g,1}^S(k)$ and $x\in\G$ by:
\[
\tau_k^S(f)([x]) = f(x)x^{-1}\ \text{mod}\ \G_{k+2}^S,
\]
\noindent where $[x]$ denotes the class of $x$ in $H_1(\Sigma_{g,1}; \F_p)$.
\end{definition}
\noindent We show that these are well-defined homomorphisms in Section \ref{Section:JohnsonFiltAndHomo}.

Johnson calculates the image of the first homomorphism $\tau_1$. By definition this image is contained in 
$Hom(H, \el_2)$. This is isomorphic to $H\otimes(\bigwedge^2H)$ under the isomorphism 
$\el_2 \longrightarrow \bigwedge^2H$ that takes the coset of $[x,y]\in\G_2$ to $[x]\wedge[y]$ and by virtue of $H$ being isomorphic to its dual.
Further, $H\otimes(\bigwedge^2H)$ contains $\bigwedge^3H$ as a subgroup via the inclusion:
\[
x\wedge y\wedge z \mapsto x\otimes(y\wedge z) + y\otimes(z\wedge x) + z\otimes(x\wedge y)
\]

\noindent Johnson shows that $\tau_1$ surjects onto this subgroup.

\begin{thm}[Johnson, {\cite{Johnson0}}]
$\tau_1(\I_{g,1}) \cong \bigwedge^3H$ for $g\geq 2$.
\end{thm}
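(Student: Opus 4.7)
My plan has three parts: identify the codomain $Hom(H,\el_2)$ in concrete terms, restrict the image of $\tau_1$ to $\bigwedge^3 H$ using the boundary relation, and finally exhibit explicit mapping classes in $\I_{g,1}$ whose $\tau_1$-images span $\bigwedge^3 H$.

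Since $\G = \pi_1(\Sigma_{g,1})$ is free of rank $2g$, a standard computation (for instance via the Magnus expansion) gives a canonical isomorphism $\el_2 \cong \bigwedge^2 H$ sending the class of $[x,y]\bmod \G_3$ to $[x]\wedge[y]$. Combined with the self-duality $H \cong H^*$ furnished by the symplectic intersection form, this yields $Hom(H,\el_2) \cong H \otimes \bigwedge^2 H$, which is where $\tau_1$ a priori takes values.

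To cut the image down to $\bigwedge^3 H$, I would use that every $f \in Mod_{g,1}$ fixes $\partial\Sigma_{g,1}$ pointwise and hence fixes the homotopy class of the boundary loop $\zeta = \prod_{i=1}^g [a_i,b_i] \in \G_2$, once we choose a symplectic generating set $a_1,b_1,\ldots,a_g,b_g$ of $\G$. For $f \in \I_{g,1}$ the identity $f(\zeta) = \zeta$, reduced modulo $\G_3$, becomes a linear constraint on $\tau_1(f) \in H \otimes \bigwedge^2 H$: it must lie in the kernel of the contraction map dual to wedging with the symplectic form $\omega = \sum_i a_i\wedge b_i$. A short linear algebra calculation identifies this kernel precisely with the image of the antisymmetrization $\bigwedge^3 H \hookrightarrow H \otimes \bigwedge^2 H$ appearing in the statement, giving the containment $\tau_1(\I_{g,1}) \subseteq \bigwedge^3 H$.

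For surjectivity I would evaluate $\tau_1$ on bounding pair maps. Given disjoint homologous non-separating simple closed curves $\alpha, \beta$ whose union bounds a subsurface $S \subset \Sigma_{g,1}$ of positive genus $h$, the mapping class $T_\alpha T_\beta^{-1}$ lies in $\I_{g,1}$, and the standard formula for the action of a Dehn twist on $\pi_1$ gives, after reduction modulo $\G_3$, a value of the form $[\alpha]\wedge\omega_S \in \bigwedge^3 H$, where $\omega_S \in \bigwedge^2 H$ is the symplectic form of $H_1(S;\Z)$. Varying $\alpha, \beta$, and $S$ and using $g \geq 2$ to guarantee enough independent configurations, the resulting decomposable $3$-vectors already span $\bigwedge^3 H$. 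This last step is the main obstacle: the bookkeeping required to compute $\tau_1(T_\alpha T_\beta^{-1})$ modulo $\G_3$ is delicate, and the spanning argument requires some care in exhibiting bounding pairs whose associated $[\alpha]\wedge\omega_S$ exhaust a basis of $\bigwedge^3 H$.
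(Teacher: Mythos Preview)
The paper does not prove this theorem; it is cited as a result of Johnson and then used as input for the mod-$p$ computations in Section~\ref{Section:Images}. Your outline is essentially Johnson's own argument: identify $Hom(H,\el_2)\cong H\otimes\bigwedge^2 H$, prove the containment $\tau_1(\I_{g,1})\subset\bigwedge^3 H$ via the boundary relation, and establish surjectivity by evaluating $\tau_1$ on bounding pair maps, obtaining the formula $\tau_1(T_\alpha T_\beta^{-1})=\pm[\alpha]\wedge\omega_S$ that you quote.

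There is, however, a real gap in your containment step. Since $\zeta\in\G_2$ and $f\in\I_{g,1}(1)$, one already has $f(\zeta)\zeta^{-1}\in\G_3$ (this is the $k=l=1$ case of Andreadakis' lemma), so ``reducing modulo $\G_3$'' yields no constraint whatsoever; the nontrivial information lives one step deeper, in $\el_3=\G_3/\G_4$. More importantly, the linear map whose kernel you need is not a symplectic contraction. Expanding $f(\zeta)\zeta^{-1}$ modulo $\G_4$ shows the constraint is that $\tau_1(f)$, viewed in $H\otimes\bigwedge^2 H$, lies in the kernel of the Lie bracket
\[
H\otimes\bigwedge^2 H\longrightarrow\el_3,\qquad x\otimes(y\wedge z)\longmapsto[x,[y,z]].
\]
The Jacobi identity places $\bigwedge^3 H$ inside this kernel, and a dimension count (Witt's formula gives $\dim\el_3=\tfrac{1}{3}((2g)^3-2g)$) shows equality. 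The symplectic form enters only through the identification $H\cong H^*$ and through the fact that the class of $\zeta$ in $\el_2\cong\bigwedge^2 H$ equals $\omega$; no map describable as ``contraction dual to wedging with $\omega$'' has $\bigwedge^3 H$ as its kernel in $H\otimes\bigwedge^2 H$. Once you replace your map by the bracket and work modulo $\G_4$, the rest of your plan, including the surjectivity via bounding pairs, is exactly Johnson's and goes through.
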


We calculate the images of our first mod-p homomorphisms $\tau_1^Z$ and $\tau_1^S$, proving the following theorems
in Section \ref{Section:Images}.

\begin{thm}\label{Theorem:ZassenhausImage}
$image(\tau_1^Z) \cong \bigwedge^3H_1(\Sigma_{g,1};\F_p)$ for $g\geq 2$.
\end{thm}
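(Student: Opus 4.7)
The plan is to mirror Johnson's original argument for $\tau_1(\I_{g,1}) \cong \bigwedge^3 H$, using the mod-$p$ integral/Zassenhaus comparison to get the lower bound and a boundary-cycle constraint to get the upper bound. Write $H = H_1(\Sigma_{g,1};\F_p)$. The first step is to identify the target: in Section \ref{Section:Series} one checks that for odd $p$ one has $\G_2^Z = \G_2\G^p$ and $\G_3^Z = \G_3\G^p$, and the $p$-power in the associated restricted Lie algebra of the Zassenhaus filtration sits in degree $p \geq 3$, so degree two is pure bracket; this yields $\el_2^Z \cong \bigwedge^2 H$. Combining this with the mod-$p$ symplectic duality $H \cong H^*$ gives
\[
\mathrm{Hom}(H,\el_2^Z) \;\cong\; H \otimes \bigwedge^2 H,
\]
containing $\bigwedge^3 H$ via the standard cyclic embedding $x\wedge y\wedge z \mapsto x\otimes(y\wedge z)+y\otimes(z\wedge x)+z\otimes(x\wedge y)$.

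For the containment $\bigwedge^3 H \subseteq \operatorname{image}(\tau_1^Z)$, I would exploit the inclusion $\I_{g,1} \subseteq \I_{g,1}^Z(1) = Mod_{g,1}[p]$ (acting trivially on $H_1(\Z)$ forces acting trivially on $H_1(\F_p)$). The natural surjection $\G_2/\G_3 \twoheadrightarrow \G_2\G^p/\G_3\G^p = \el_2^Z$ together with the mod-$p$ reduction on $H$ fits into a commutative square whose right edge is the reduction $\bigwedge^3 H_1(\Sigma_{g,1};\Z) \twoheadrightarrow \bigwedge^3 H$ and whose top edge is Johnson's surjection $\tau_1$. This forces $\operatorname{image}(\tau_1^Z) \supseteq \bigwedge^3 H$.

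For the reverse containment $\operatorname{image}(\tau_1^Z) \subseteq \bigwedge^3 H$, I would run Johnson's boundary argument verbatim in $\G/\G_3^Z$. Choose a symplectic generating set $a_1,b_1,\dots,a_g,b_g$ for $\G$, so that the boundary loop represents $\omega = \prod_{i=1}^g [a_i,b_i] \in \G_2$. Since $f \in Mod_{g,1}$ fixes $\partial \Sigma_{g,1}$ pointwise, $f(\omega) = \omega$. Writing $f(a_i) = a_i u_i$ and $f(b_i) = b_i v_i$ with $u_i,v_i \in \G_2^Z$ and expanding $f(\omega) = \prod_i [a_i u_i, b_i v_i]$ modulo $\G_3^Z$ using standard commutator identities (which are formally identical to those in $\G/\G_3$ once $\el_2^Z \cong \bigwedge^2 H$ is in place) yields a linear relation on $\tau_1^Z(f)$ that, under the identification above, is precisely the vanishing of the contraction with the symplectic form. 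The kernel of this contraction is exactly $\bigwedge^3 H$.

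The main obstacle is the identification $\el_2^Z \cong \bigwedge^2 H$, specifically showing that $\G_2 \cap \G_3\G^p = \G_3 \cdot (\G_2)^p$ (or equivalently that no $p$-th power of a degree-one element contaminates weight two). This is where the hypothesis that $p$ is odd is essential and is the technical heart of the argument; once it is in place the rest is a routine transcription of Johnson's proof to the mod-$p$ setting, with the mild caveat of verifying that the bilinear form on $H$ induced by intersection is still nondegenerate over $\F_p$, which it is.
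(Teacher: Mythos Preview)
Your argument is correct, but your route to the upper bound is different from the paper's. You invoke the boundary constraint $f(\omega)=\omega$ and identify the resulting linear condition on $\tau_1^Z(f)$ with the bracket map $H\otimes\bigwedge^2H\to\el_3\otimes\F_p$, whose kernel is $\bigwedge^3H$ by Jacobi. (One slip: the expansion must be carried out modulo $\G_4^Z$, not $\G_3^Z$, since $\omega\in\G_2^Z$ already forces $f(\omega)\omega^{-1}\in\G_3^Z$; the nontrivial constraint lives in $\el_3^Z$.) The paper instead argues on generators: by Lemma~\ref{ModPGenerators}, $Mod_{g,1}[p]$ is generated by $\I_{g,1}$ together with the set $D_p$ of $p^{th}$ powers of Dehn twists, and one checks directly via Lemma~\ref{lemma:TwistContributions} that $\tau_1^Z(T_\gamma^p)=0$ because $\G^p<\G_3^Z$ for $p\ge 3$, while $\tau_1^Z|_{\I_{g,1}}=\tau_1\bmod p$ recovers Johnson's $\bigwedge^3H$. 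Your approach is closer to Johnson's original and sidesteps the Bass--Milnor--Serre input behind Lemma~\ref{ModPGenerators}; the paper's approach, by contrast, isolates exactly the contribution of $D_p$, which is then reused in the Stallings calculation (Theorem~\ref{Theorem:StallingsImage}), where $\G^p\not<\G_3^S$ and the $D_p$ generators account for the extra $\mathfrak{sp}_{2g}(\F_p)$ summand.
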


\begin{thm}\label{Theorem:StallingsImage}
$image(\tau_1^S) \cong \bigwedge^3H_1(\Sigma_{g,1};\F_p) \oplus \mathfrak{sp}_{2g}(\F_p)$ for $g\geq 2$.
\end{thm}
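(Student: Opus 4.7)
The plan is to pin down the structure of $\el_2^S$, then analyze the commutator and $p$-th-power components of $\tau_1^S(f)$ separately using the boundary relation $f(\partial) = \partial$ together with the symplectic action on mod-$p^2$ homology, and finally realize the full target by combining Johnson's bounding-pair constructions with preimages under $Mod_{g,1} \twoheadrightarrow Sp_{2g}(\Z/p^2)$.

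First I would establish the canonical splitting $\el_2^S \cong \bigwedge^2 H \oplus H$, where the two summands are the images of $[\G,\G]$ and $\G^p$ in $\G_2^S/\G_3^S$. Applying the Hall--Petresco formula in the class-$2$ nilpotent quotient $\G/\G_3^S$ reduces $(xy)^p$ to $x^p y^p [y,x]^{p(p-1)/2}$ modulo $\G_3^S$, and for odd $p$ the correction term dies since $p \mid p(p-1)/2$ while $(\G_2^S)^p < \G_3^S$. Hence $x \mapsto x^p$ descends to a well-defined homomorphism $H \to \el_2^S$. That the two subspaces intersect trivially follows by passing to the abelianization $(\G/\G_3^S)^{ab} = H_1(\Sigma_{g,1};\Z/p^2) = (\Z/p^2)^{2g}$, in which the $p$-th-power subspace injects onto $p \cdot (\Z/p^2)^{2g}$ while the commutator subspace dies. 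Writing $\tau_1^S(f) = (\alpha_f, \beta_f)$ with $\alpha_f \in Hom(H, \bigwedge^2 H)$ and $\beta_f \in End(H)$ splits the target accordingly.

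To see $\beta_f \in \mathfrak{sp}_{2g}(\F_p)$, I would use the action on $H_1(\Sigma_{g,1};\Z/p^2)$: since $f \in \I_{g,1}^S(1) = Mod_{g,1}[p]$ acts trivially mod $p$, its symplectic action on $H_1(\Sigma_{g,1};\Z/p^2)$ lies in the kernel of $Sp_{2g}(\Z/p^2) \to Sp_{2g}(\F_p)$, which is precisely $I + p \cdot \mathfrak{sp}_{2g}(\F_p)$, and this defect matrix coincides with $\beta_f$ after unwinding the $p$-th-power part of $\delta_f(a_i) = f(a_i)a_i^{-1}$ in $\el_2^S$. To see $\alpha_f \in \bigwedge^3 H$, I would mimic Johnson's original argument: expanding $f(\partial) = \partial$ modulo $\G_4^S$ via standard commutator identities gives $\sum_i ([a_i,\delta_f(b_i)] + [\delta_f(a_i),b_i]) \equiv 0$ in $\el_3^S$. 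Splitting each $\delta_f(\cdot)$ into its commutator and $p$-th-power parts and using the identity $[x,y^p] \equiv [x,y]^p \pmod{\G_4^S}$ separates this single relation into two independent pieces inside $\el_3^S$: the Lie-bracket piece is Johnson's original condition and forces $\alpha_f \in \bigwedge^3 H$, while the $p$-th-power-of-commutator piece re-derives $\beta_f \in \mathfrak{sp}$. The main obstacle will be verifying that these two subspaces of $\el_3^S$ are linearly independent --- that is, that the triple Lie brackets $[a_i,[a_j,a_k]]$ are linearly independent from the $p$-th powers $[a_j,a_k]^p$ modulo $\G_4^S$ --- which I would do with a mod-$p$ Magnus expansion or by projecting onto a suitable intermediate quotient of $\G/\G_4^S$.

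For surjectivity, Johnson's bounding-pair maps lie in $\I_{g,1} \subset \I_{g,1}^S(1)$, have $\beta_f = 0$ (their $\delta_f(x)$ live in $[\G,\G]$), and by the theorem of Johnson already cited their $\tau_1$-images sweep out $\bigwedge^3 H_{\Z}$; reducing mod $p$ realizes every element of $\bigwedge^3 H \oplus \{0\}$. Since $Mod_{g,1} \twoheadrightarrow Sp_{2g}(\Z) \twoheadrightarrow Sp_{2g}(\Z/p^2)$, the subgroup $\I_{g,1}^S(1)$ surjects onto $I + p \cdot \mathfrak{sp}_{2g}(\F_p)$, so every $\beta \in \mathfrak{sp}_{2g}(\F_p)$ is realized as $\beta_f$ for some $f$. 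Subtracting a bounding-pair map realizing the commutator part of this $f$ then produces $(0,\beta)$ in the image, and combining with the Johnson contributions gives the full direct sum $\bigwedge^3 H \oplus \mathfrak{sp}_{2g}(\F_p)$.
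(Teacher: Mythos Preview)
Your strategy is sound and genuinely different from the paper's. The paper never analyzes an arbitrary $f\in Mod_{g,1}[p]$ directly; instead it first proves (via Bass--Milnor--Serre) that $Mod_{g,1}[p]$ is generated by $\I_{g,1}$ together with $p$-th powers of Dehn twists, computes $\tau_1^S$ on each of those two families, and then packages the result as a short exact sequence $0\to\bigwedge^3 H\to image(\tau_1^S)\to\mathfrak{sp}_{2g}(\F_p)\to 0$ whose quotient map factors through $Sp_{2g}(\Z)[p]/Sp_{2g}(\Z)[p^2]$. The splitting is obtained only abstractly, as a splitting of $\F_p$-vector spaces. By contrast, your Hall--Petresco argument gives a \emph{canonical} decomposition $\el_2^S\cong\bigwedge^2 H\oplus H$ for odd $p$, and your identification of $\beta_f$ with the defect matrix of the $\Z/p^2$-symplectic action is cleaner than the paper's route and avoids invoking any normal-generation statement for $Sp_{2g}(\Z)[p]$ or $Sp_{2g}(\Z)[p^2]$. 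What you buy is a more elementary and more structured proof; what you pay is the technical verification you flag.

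That flagged obstacle is real but tractable. For the boundary-relation argument to force $\alpha_f\in\bigwedge^3 H$ you need the images of $\G_3$ and of $[\G,\G]^p$ in $\el_3^S$ to be linearly independent, and you also need the natural map $\el_3\otimes\F_p\to\el_3^S$ (inclusion of the triple-commutator piece) to be injective. Both hold for odd $p$: one clean way is to observe that the associated graded $\bigoplus_k\el_k^S$ is the free \emph{restricted} Lie algebra on $H$, in which degree-$3$ Lie words and $p$-th powers of degree-$2$ Lie words are independent; alternatively, the mod-$p$ Magnus expansion you propose works, since a nontrivial element of $\el_3\otimes\F_p$ has lowest-degree term $3$ while $[x,y]^p$ has lowest-degree term $2p\geq 6$. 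Your identity $[x,y^p]\equiv[x,y]^p\pmod{\G_4^S}$ is also correct for odd $p$ (the Hall--Petresco correction $[[y,x],y]^{\binom{p}{2}}$ lies in $(\G_3)^p\subset\G_4^S$), so the separation of the boundary relation goes through as you describe. Once that is in hand, your surjectivity argument is fine: Johnson's bounding pairs hit all of $\bigwedge^3 H\oplus\{0\}$, the surjection $Mod_{g,1}[p]\twoheadrightarrow\ker(Sp_{2g}(\Z/p^2)\to Sp_{2g}(\F_p))$ realizes every $\beta$, and subtracting off $\alpha_f\in\bigwedge^3 H$ by a Torelli element yields $(0,\beta)$.
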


The abelianization of $Mod_{g,1}[p]$ in our case of $p$ being an odd prime is known due to independent work of 
Perron in \cite{Perron2}, Putman in \cite{Putman1}, and Sato in \cite{Sato}.

\begin{thm}[Perron, {\cite{Perron2}}; Putman, {\cite{Putman1}}; Sato, {\cite{Sato}}]
For $p$ an odd prime and $g\geq 3$, $H_1(Mod_{g,1}[p]) \cong \bigwedge^3H_1(\Sigma_{g,1};\F_p)\oplus\mathfrak{sp}_{2g}(\F_p)$.
\end{thm}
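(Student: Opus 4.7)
The strategy is to combine the surjectivity of the Stallings mod-$p$ Johnson homomorphism with a matching upper bound on $H_1(Mod_{g,1}[p])$. By Theorem \ref{Theorem:StallingsImage}, $\tau_1^S$ is a surjective homomorphism from $Mod_{g,1}[p]$ onto the abelian group $\bigwedge^3 H_1(\Sigma_{g,1};\F_p) \oplus \mathfrak{sp}_{2g}(\F_p)$, so it descends to a surjection
\[
\bar{\tau}_1^S \colon H_1(Mod_{g,1}[p]) \twoheadrightarrow \bigwedge^3 H_1(\Sigma_{g,1};\F_p) \oplus \mathfrak{sp}_{2g}(\F_p).
\]
This handles one direction of the isomorphism; it remains to show that $\bar{\tau}_1^S$ is injective, equivalently that $\ker(\tau_1^S) = \I_{g,1}^S(2)$ is contained in the commutator subgroup $[Mod_{g,1}[p], Mod_{g,1}[p]]$.

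To bound the abelianization from above, I would apply the Hochschild-Serre five-term exact sequence to the extension $1 \to Mod_{g,1}[p] \to Mod_{g,1} \to Sp_{2g}(\F_p) \to 1$, giving
\[
H_2(Mod_{g,1}) \to H_2(Sp_{2g}(\F_p)) \to H_1(Mod_{g,1}[p])_{Sp_{2g}(\F_p)} \to H_1(Mod_{g,1}) \to H_1(Sp_{2g}(\F_p)) \to 0.
\]
For $g \geq 3$, $H_1(Mod_{g,1})$ vanishes by the classical computation of the mapping class group abelianization, and $H_1(Sp_{2g}(\F_p)) = 0$ since $Sp_{2g}(\F_p)$ is perfect for odd $p$. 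Consequently the $Sp_{2g}(\F_p)$-coinvariants of $H_1(Mod_{g,1}[p])$ are a quotient of the Schur multiplier $H_2(Sp_{2g}(\F_p))$, which is a standard computation.

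The main obstacle is to promote this bound on coinvariants to a bound on $H_1(Mod_{g,1}[p])$ itself. My plan is to construct an $Sp_{2g}(\F_p)$-equivariant splitting of $\bar{\tau}_1^S$ using lifts of standard generators: bounding pair maps from the Torelli group $\I_{g,1}$ map to the $\bigwedge^3 H_1(\Sigma_{g,1};\F_p)$ summand, while $p$-th powers of Dehn twists along non-separating simple closed curves map to the $\mathfrak{sp}_{2g}(\F_p)$ summand. Since each summand is generated as an $Sp_{2g}(\F_p)$-module by a single orbit, one orbit of each kind of lift suffices; combined with the coinvariant bound from the previous paragraph, this forces $\ker(\bar{\tau}_1^S) = 0$. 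The delicate technical point - handled by substantial presentation-theoretic work in the independent arguments of Perron, Putman, and Sato - is verifying that these explicit lifts together with commutators actually exhaust $Mod_{g,1}[p]$, rather than merely spanning a conceivable quotient of it.
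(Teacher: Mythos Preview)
The paper does not contain a proof of this theorem. It is stated as a known result due independently to Perron, Putman, and Sato, and is invoked only to observe that the image computation of Theorem~\ref{Theorem:StallingsImage} identifies $\tau_1^S$ with the abelianization map of $Mod_{g,1}[p]$. There is therefore no ``paper's own proof'' to compare against; your proposal is an attempt to supply an argument where the paper simply cites one.

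As for the sketch itself: the surjectivity half is fine and is exactly what the paper establishes via Theorem~\ref{Theorem:StallingsImage}. The proposed upper bound, however, has a gap you yourself flag at the end. From the five-term sequence you obtain control only over the $Sp_{2g}(\F_p)$-coinvariants of $H_1(Mod_{g,1}[p])$, and the passage from coinvariants to the full module is the entire difficulty. An equivariant section of $\bar\tau_1^S$ (even if you produced one) gives only a direct-sum decomposition $H_1(Mod_{g,1}[p]) \cong \bigl(\bigwedge^3 H_1(\Sigma;\F_p)\oplus\mathfrak{sp}_{2g}(\F_p)\bigr)\oplus K$; to kill $K$ you would still need to know that $K$ has no nontrivial $Sp_{2g}(\F_p)$-coinvariants \emph{and} that this forces $K=0$, which is false in general for modules over a finite group. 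Your final paragraph concedes that the actual work---showing that separating twists, $p$-th powers of bounding pairs, and $p^2$-powers of twists lie in the commutator subgroup---is carried out in the cited references. So the proposal is not really a proof but an outline that defers the substantive step back to Perron, Putman, and Sato, which is precisely what the paper does by citation.
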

\noindent Theorem \ref{Theorem:StallingsImage} shows that $\tau_1^S$ induces the abelianization of $Mod_{g,1}[p]$.

\subsection{Morita's restriction} Morita restricts the target of Johnson's homomorphisms $\tau_k$ in the following way:

\begin{thm}[Morita, {\cite{Morita}}]
$image(\tau_k) < H_3(\N_k)$
\end{thm}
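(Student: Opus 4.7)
The plan is to exhibit a natural inclusion $H_3(\N_k) \hookrightarrow \text{Hom}(H, \el_{k+1})$ and then show that $\tau_k(f)$ always lies in its image, exploiting the fact that any mapping class fixes the boundary curve $\zeta = \prod_{i=1}^{g}[a_i,b_i]$ of $\Sigma_{g,1}$.

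First I would reinterpret $\tau_k$ cohomologically. Consider the central extension
\[
1 \longrightarrow \el_{k+1} \longrightarrow \N_{k+1} \longrightarrow \N_k \longrightarrow 1.
\]
For $f\in \I_{g,1}(k)$, the induced automorphism of $\N_{k+1}$ is a lift of the identity on $\N_k$. Any two such lifts differ by a crossed homomorphism $\N_k \to \el_{k+1}$; since $\el_{k+1}$ is central and the conjugation action is therefore trivial, this reduces to a group homomorphism factoring through $H_1(\N_k)=H$, which recovers $\tau_k(f)\in \text{Hom}(H,\el_{k+1})$.

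Next I would extract the constraint from $f(\zeta)=\zeta$. After using the symplectic form on $H$ to identify $\text{Hom}(H, \el_{k+1}) \cong H\otimes\el_{k+1}$, a direct manipulation of the equation $f(\zeta)=\zeta$ in $\N_{k+2}$ forces $\tau_k(f)$ to lie in the kernel of the bracket map $b \colon H\otimes\el_{k+1} \to \el_{k+2}$ coming from the free Lie algebra structure on $\bigoplus_j \el_j$. Intuitively, since $\zeta$ is a product of commutators, its image under $\bar f - \text{id}$ admits a bracket-free expression that must vanish, giving the cocycle-type identity needed.

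Finally I would identify this kernel with $H_3(\N_k)$. Using the Hochschild--Serre spectral sequence of the central extension above, together with the freeness of $\G$ (so that $H_i(\G)$ vanishes for $i\geq 2$), one obtains a Hopf-type description of $H_3(\N_k;\Z)$ as a subspace of $H\otimes\el_{k+1}$ coinciding precisely with $\ker(b)$. The main obstacle is this last identification: the relevant differentials of the spectral sequence must be computed explicitly in terms of the free Lie algebra bracket, and one must verify that no lower-order terms obscure the matching between $\ker(b)$ and $H_3(\N_k)$. Once this is in place, combining with the boundary-constraint step yields $\text{image}(\tau_k)\subseteq H_3(\N_k)$ inside $\text{Hom}(H,\el_{k+1})$.
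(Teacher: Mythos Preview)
Your proposal is correct and leads to the theorem, but it proceeds by a genuinely different route from Morita's argument as reproduced in the paper. The paper does not argue via the boundary relation $f(\zeta)=\zeta$; instead it constructs an auxiliary homomorphism $\tau_k'\colon \I_{g,1}(k)\to H_3(\N_k)$ at the chain level. One fixes a $2$-chain $\sigma$ in $\G$ with $\partial\sigma=-z$ (the boundary loop), notes that $\sigma_f=\sigma-f_*(\sigma)$ is a $2$-cycle, uses $H_2(\G)=0$ to find a $3$-chain $c_f$ with $\partial c_f=\sigma_f$, and pushes $c_f$ down to $\N_k$, where it becomes a cycle because $f$ acts trivially there. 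One then verifies $\tau_k=d_{3,0}^2\circ\tau_k'$ in the Hochschild--Serre spectral sequence of the central extension, whence $\mathrm{image}(\tau_k)\subset\mathrm{image}(d_{3,0}^2)\cong H_3(\N_k)$. Your approach is more elementary for the bare containment: you skip the chain construction and read the constraint $\tau_k(f)\in\ker(b)$ directly off the equation $f(\zeta)=\zeta$ in $\N_{k+2}$, then invoke the same spectral-sequence identification $\ker(b)\cong H_3(\N_k)$. Morita's route, on the other hand, yields strictly more: the lift $\tau_k'$ is itself a well-defined homomorphism into $H_3(\N_k)$, and it is this lifted map (and its mod-$p$ analogue ${\tau_k^*}'$) that the paper actually needs for the subsequent corollaries. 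Both arguments ultimately rest on the same spectral-sequence computation identifying $\rho$ (your $b$) with the edge map $E_{1,1}^2\to E_{1,1}^3\cong\el_{k+2}$ and its kernel with the image of $d_{3,0}^2$.
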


\noindent Morita's argument easily generalizes to obtain a similar restriction for our mod-p Johnson homomorphisms. 
In particular, we have:

\begin{thm}
$image(\tau_k^*) < H_3(\N_k^*;\F_p)$ for $* = S$ or $Z$.
\end{thm}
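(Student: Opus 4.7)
The approach is to adapt Morita's original argument to the mod-$p$ setting. Morita's proof is formal: it uses only that $\{\G_k\}$ is a central series of characteristic subgroups whose successive quotients are abelian, together with the resulting central extension $1 \to \el_{k+1} \to \N_{k+1} \to \N_k \to 1$ and a Hochschild--Serre computation. Both the Zassenhaus and Stallings mod-$p$ series share these structural properties (with $\F_p$-vector space quotients in place of free abelian ones), so a single argument will handle both $* = S$ and $* = Z$.

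My plan is the following three-step adaptation. First, for $f \in \I_{g,1}^*(k)$, consider the map $d_f : \N_{k+1}^* \to \el_{k+1}^*$ defined by $d_f(y) = f(y) y^{-1}$. This is a group homomorphism because $\el_{k+1}^*$ is central in $\N_{k+1}^*$, and it restricts to zero on $\el_{k+1}^*$; hence it descends to a homomorphism $\overline{d_f} : \N_k^* \to \el_{k+1}^*$, which on $H_1(\N_k^*;\F_p) = H_1(\Sigma_{g,1};\F_p)$ recovers $\tau_k^*(f)$. Second, apply the Hochschild--Serre spectral sequence with $\F_p$ coefficients to the above central extension; the fact that the cocycle $d_f$ lifts to a genuine homomorphism on $\N_{k+1}^*$ (rather than being only a crossed homomorphism on $\N_k^*$) forces the transgression of $[d_f]$ to vanish in $H^2(\N_k^*;\el_{k+1}^*)$. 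Third, following Morita, translate this transgression condition into the statement that $\tau_k^*(f)$ lies in the subgroup identified with $H_3(\N_k^*;\F_p)$ inside $\mathrm{Hom}(H_1(\Sigma_{g,1};\F_p),\el_{k+1}^*)$.

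The main obstacle is step three: verifying that Morita's identification of the transgression kernel with $H_3(\N_k^*;\F_p)$ goes through unchanged mod $p$. The subtlety is that the mod-$p$ series contain additional $p$-th power relations not present in the lower central series, so in principle these could enlarge the kernel of the bracket pairing $H_1(\Sigma_{g,1};\F_p) \otimes \el_{k+1}^* \to \el_{k+2}^*$ and hence of the transgression. In practice these $p$-th power relations are already encoded in the definition of $\el_{k+1}^*$ and $\el_{k+2}^*$, so the bracket pairing retains the same abstract form and the kernel calculation is structurally identical. Because $\F_p$ is a field, all universal coefficient and K\"unneth issues disappear, and the desired identification should follow from a direct mod-$p$ translation of Morita's diagram chase, with $\Z$ replaced by $\F_p$ throughout.
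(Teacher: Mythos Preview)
Your high-level strategy---adapt Morita's argument to the mod-$p$ setting---is correct and is exactly what the paper does. However, your outline of that argument has a genuine gap: it never uses the fact that $f$ fixes the boundary component $\partial\Sigma_{g,1}$, and this is the entire content of Morita's restriction. Concretely, your step~2 is vacuous. The class $\overline{d_f}$ lies in $H^1(\N_k^*;\el_{k+1}^*)=E_2^{1,0}$, not in $E_2^{0,1}$, so it is not in the domain of the transgression; and the restriction of $d_f$ to $\el_{k+1}^*$ is already zero (by Lemma~\ref{lemma:JohnsonRange}, since $[\I_{g,1}^*(k),\G_{k+1}^*]<\G_{2k+1}^*\subset\G_{k+2}^*$), so any transgression condition you could extract is automatically satisfied and carries no information. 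More tellingly, every ingredient in your steps~1--2 holds verbatim for $\mathrm{Aut}(F_{2g})$ in place of $Mod_{g,1}$, yet the analogue of the theorem is false there: the Johnson-type homomorphism for $\mathrm{Aut}(F_n)$ lands only in the full derivation space, not in $H_3(\N_k)$.

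The paper's argument (following Morita) supplies the missing idea via the boundary. One fixes a $2$-chain $\sigma$ in $\G$ with $\partial\sigma=-z$, where $z$ represents $\partial\Sigma_{g,1}$; since $f$ fixes $z$, the chain $\sigma_f=\sigma-f_*\sigma$ is a $2$-cycle, hence (as $H_2(\G)=0$) bounds some $3$-chain $c_f$, whose reduction mod $\G_{k+1}^*$ is a $3$-cycle defining ${\tau_k^*}'(f)\in H_3(\N_k^*;\F_p)$. One then checks that the homological spectral-sequence differential $d_{3,0}^2$ sends ${\tau_k^*}'(f)$ to $\tau_k^*(f)\in E_{1,1}^2\cong H_1(\Sigma_{g,1};\F_p)\otimes\el_{k+1}^*$, giving the factorization through $H_3$. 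Note that even the identification $\mathrm{Hom}(H_1,\el_{k+1}^*)\cong H_1\otimes\el_{k+1}^*$ needed to place $\tau_k^*(f)$ inside $E_{1,1}^2$ uses the symplectic form on $H_1$---equivalently, the boundary relation $z=\prod[a_i,b_i]$---which is likewise absent from your outline.
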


\noindent In Section \ref{Section:Morita} we describe how $H_3(\N_k^*;\F_p)$ lies in $Hom(H_1(\Sigma_{g,1};\F_p), \el_{k+1}^*)$.

\subsection{Generating $ker(\tau_1^*)$} Johnson gives a nice characterization of the kernel of the first Johnson homomorphism, often denoted $\K_{g,1}$.
Let $\gamma \subset \Sigma_{g,1}$ be a simple closed curve such that its complement in $\Sigma_{g,1}$ is 
disconnected. Such a curve is called a separating curve. It is easy to see by a direct calculation that if
$\gamma$ is a separating curve then the Dehn twist $T_{\gamma}$ about $\gamma$ is a member of $\K_{g,1}$. 
We call such mapping classes separating twists. Johnson shows that these are enough to generate $\K_{g,1}$.

\begin{thm}[Johnson, {\cite{Johnson2}}]
For $g\geq 3$, $\K_{g,1} = ker(\tau_1)$ is the subgroup of $Mod_{g,1}$ generated by Dehn twists about separating curves.
\end{thm}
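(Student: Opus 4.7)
The plan is to prove both containments, with almost all the effort going into showing $\ker(\tau_1)$ is contained in the subgroup $K$ generated by separating twists. The easy direction, $K \subseteq \ker(\tau_1)$, is essentially a computation at the level of the fundamental group: a separating simple closed curve $\gamma$ is null-homologous, so a representative loop based at the boundary lies in $\G_2$, and for any $x \in \G$ the difference $T_\gamma(x)\, x^{-1}$ can be expressed as a product of commutators $[\gamma^{\pm 1}, w]$ indexed by the intersections of $x$ with $\gamma$. Each such commutator lies in $\G_3$, so $\tau_1(T_\gamma) = 0$.

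For the reverse inclusion I would reduce to a statement about a simpler generating set for $\I_{g,1}$. The key input is Johnson's structure theorem for the Torelli group, which (for $g \geq 3$) presents $\I_{g,1}$ as generated by bounding pair maps $T_\alpha T_\beta^{-1}$, where $\{\alpha,\beta\}$ is a pair of disjoint homologous non-separating simple closed curves. A direct computation gives $\tau_1$ on such a map: if $\alpha\cup\beta$ cobounds a subsurface of genus $h$ with symplectic basis $x_1, y_1, \ldots, x_h, y_h$, then $\tau_1(T_\alpha T_\beta^{-1})$ corresponds, under the identification of $Hom(H, \el_2)$ with $H\otimes\bigwedge^2 H$, to $[\alpha] \wedge \sum_{i=1}^{h}[x_i]\wedge[y_i]$. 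In particular every simple wedge $a\wedge b\wedge c \in \bigwedge^3 H$ is realized by a BP map (choose $[\alpha]=a$ and arrange the cobounded subsurface to have $b,c$ as part of its basis), which re-derives Johnson's surjectivity onto $\bigwedge^3 H$.

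The crux is then to show that the induced map $\overline{\tau}_1 : \I_{g,1}/K \to \bigwedge^3 H$ is injective, equivalently that the assignment $a\wedge b\wedge c \mapsto [T_\alpha T_\beta^{-1}]$ descends to a well-defined map $\bigwedge^3 H \to \I_{g,1}/K$. This amounts to lifting the defining relations of $\bigwedge^3 H$, namely antisymmetry and trilinearity, to relations among bounding pair maps modulo separating twists. The main geometric tools are the lantern relation and its variants, which express a product of separating twists as a product of twists about non-separating curves and thereby relate BP maps whose cobounded subsurfaces differ by a handle slide.

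The hard step is this last one. A single lantern gives one algebraic identity, but upgrading it into the full trilinearity relation for $\bigwedge^3 H$ requires a careful catalogue of surgery moves on bounding pairs, showing in each case that the correction factor lies in $K$ rather than merely in $\I_{g,1}$. The hypothesis $g \geq 3$ enters precisely here: it guarantees enough disjoint auxiliary curves to perform these surgeries. Once trilinearity is verified, one has mutually inverse maps $\overline{\tau}_1$ and $a\wedge b\wedge c \mapsto [T_\alpha T_\beta^{-1}]$, forcing $\ker(\tau_1) = K$.
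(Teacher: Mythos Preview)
The paper does not actually prove this theorem; it is stated as a result of Johnson and cited to \cite{Johnson2} without argument. So there is no in-paper proof to compare your proposal against.

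That said, your outline is broadly faithful to Johnson's original strategy: the easy containment is the computation you describe, and the hard direction is indeed obtained by showing that $\overline{\tau}_1:\I_{g,1}/K\to\bigwedge^3 H$ is an isomorphism, with the lantern relation as the key geometric ingredient. However, what you have written is a strategy rather than a proof. You explicitly flag the crux --- lifting trilinearity to relations among bounding pair maps modulo separating twists --- and then do not carry it out, instead saying it ``requires a careful catalogue of surgery moves.'' That catalogue is essentially the entire content of Johnson's paper \cite{Johnson2}; it is long, delicate, and not at all routine. In particular, one does not simply verify antisymmetry and trilinearity directly: Johnson's actual argument passes through an auxiliary abelian quotient (built from his Birman--Craggs homomorphisms together with $\tau_1$) and a separate finite-generation result for $\I_{g,1}$, and the identification of $K$ with $\ker(\tau_1)$ is deduced from the structure of that quotient rather than by an explicit inverse map on wedge generators. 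So while your proposed inverse $a\wedge b\wedge c\mapsto[T_\alpha T_\beta^{-1}]$ is morally correct, establishing it is well-defined is not a matter of a few lantern identities, and your sketch underestimates the difficulty.
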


In Section \ref{Section:Kernels}, we describe generating sets for the kernels $\K_{g,1}^Z$ and $\K_{g,1}^S$ of $\tau_1^Z$ and $\tau_1^S$ respectively.
The generating sets contain certain subsets of three families of maps. The first family is
the collection of separating twists. The second family of maps is populated by bounding pair maps, 
which are elements of $\I_{g,1}$. These are maps $T_aT_b^{-1}$, where $a$ and $b$ are a pair of disjoint, 
homologous simple closed curves in $\Sigma_{g,1}$. The third and final family of maps consists of 
$p^{th}$-powers of Dehn twists $T_c^p$ about simple closed curves $c$.

\begin{thm}
For $g\geq 3$, $\K_{g,1}^Z = ker(\tau_1^Z)$ is generated by Dehn twists about separating curves and $p^{th}$-powers of Dehn twists.
\end{thm}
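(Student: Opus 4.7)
My plan is to set $K = \langle T_\gamma, T_c^p : \gamma \text{ separating}, c \text{ simple closed}\rangle$ and prove $K = \K_{g,1}^Z$ by establishing both containments. For $K \subset \K_{g,1}^Z$: separating twists lie in Johnson's $\K_{g,1}$, and since $\G_3 \subset \G_3^Z$ we have $\K_{g,1} \subset \K_{g,1}^Z$. To check $T_c^p \in \K_{g,1}^Z$ I would compute $T_c^p(x)x^{-1}$ modulo $\G_3^Z$ via the telescoping identity
\[
T_c^p(x)x^{-1} \;=\; \prod_{i=0}^{p-1} T_c^i\!\bigl(T_c(x)x^{-1}\bigr).
\]
Using $T_c(x)x^{-1} \equiv \tilde c^{\langle x, c\rangle}$ modulo $\G_2$, the abelianized contribution is $\tilde c^{p\langle x, c\rangle} \in \G^p$, and this lies in $\G_3^Z$ since $(i,j)=(1,1)$ with $p \geq 3$ appears in the Zassenhaus product. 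The commutator correction terms land in $[\G, \G^p] \subset [\G_1^Z, \G_2^Z] \subset \G_3^Z$ by the bracket property of the Zassenhaus series.

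For the reverse inclusion $\K_{g,1}^Z \subset K$, I would pass to the quotient $Q = Mod_{g,1}[p]/K$; this is well-defined because $K$ is conjugation-invariant (conjugates of separating twists are separating twists, and $f T_c^p f^{-1} = T_{f(c)}^p$). The goal is to identify $Q \cong \bigwedge^3 H_1(\Sigma_{g,1};\F_p)$ via $\tau_1^Z$. For $g \geq 3$, Johnson generates $\I_{g,1}$ by bounding pair maps and $Sp_{2g}(\Z)[p] = Mod_{g,1}[p]/\I_{g,1}$ is generated by $p$-th powers of transvections, which lift to $\{T_c^p\}$; so $Mod_{g,1}[p]$ is generated by bounding pair maps $B$ together with the family $\{T_c^p\}$. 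Each commutator of generators lies in $K$: $[B_1,B_2] \in [\I_{g,1},\I_{g,1}] \subset \K_{g,1}$, which is generated by separating twists by the Johnson theorem stated just above; $[B, T_c^p] = T_{B(c)}^p T_c^{-p}$ and $[T_c^p, T_d^p] = T_{T_c^p(d)}^p T_d^{-p}$ are products of $p$-th powers of Dehn twists. Therefore $[Mod_{g,1}[p], Mod_{g,1}[p]] \subset K$ and $Q$ is abelian.

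Being abelian, $Q$ is a quotient of $H_1(Mod_{g,1}[p]) \cong \bigwedge^3 H_1(\Sigma_{g,1};\F_p) \oplus \mathfrak{sp}_{2g}(\F_p)$ (Perron/Putman/Sato). The image of $K$ in this abelianization is exactly $0 \oplus \mathfrak{sp}_{2g}(\F_p)$: separating twists act trivially on $H$ and lie in $\ker \tau_1$, so they map to $0$ in both summands, while the infinitesimal transvections attached to $\{T_c^p\}$ span $\mathfrak{sp}_{2g}(\F_p)$ under the log isomorphism on level-$p$ congruence subgroups of $Sp_{2g}(\Z)$. Hence $Q \cong \bigwedge^3 H_1(\Sigma_{g,1};\F_p)$. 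Since $\tau_1^Z$ factors through $Q$ and is surjective by Theorem~\ref{Theorem:ZassenhausImage}, the induced map $Q \to \bigwedge^3 H_1(\Sigma_{g,1};\F_p)$ is an isomorphism, so $K = \ker \tau_1^Z = \K_{g,1}^Z$.

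The main obstacle is the verification $T_c^p \in \K_{g,1}^Z$ in the first paragraph: the telescoping must be arranged cleanly, and the correction terms must be traced back through both $\G^p \subset \G_3^Z$ and $[\G_k^Z, \G_l^Z] \subset \G_{k+l}^Z$, each of which genuinely uses that $p$ is an odd prime. After this computation the remainder of the argument is driven entirely by Johnson's classical generation theorems, the stability property $[\I_{g,1}, \I_{g,1}] \subset \K_{g,1}$, and the Perron/Putman/Sato abelianization, so no further surprises are expected.
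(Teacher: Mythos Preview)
Your argument is correct and reaches the same conclusion, but it is organized differently from the paper's proof. The paper works directly with the short exact sequence $1 \to \ker(\tau_1^Z) \to Mod_{g,1}[p] \to \bigwedge^3\F_p^{2g} \to 1$ and applies a relator-lifting lemma (Lemma~\ref{lemma:GeneratingKernels}): it writes down an explicit presentation of $\bigwedge^3\F_p^{2g}$ on the generators $\tau_1^Z(S)$, observes that each relator lifts to a separating twist, an element of $\K_{g,1}$, or a product of $p^{th}$-powers of Dehn twists, and concludes. This is entirely self-contained given Johnson's Lemma~\ref{GenImageTorelli} and the image computation of Section~\ref{Section:Images}. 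By contrast, you first show $Q = Mod_{g,1}[p]/K$ is abelian and then invoke the Perron--Putman--Sato computation of $H_1(Mod_{g,1}[p])$ to bound $|Q|$. That works, but it imports a substantially deeper theorem than the paper needs. One small point: your assertion that the image of $K$ in $H_1(Mod_{g,1}[p])$ is \emph{exactly} $0\oplus\mathfrak{sp}_{2g}(\F_p)$ is stronger than what you justify and stronger than what you need; it suffices that this image surjects onto the $\mathfrak{sp}_{2g}(\F_p)$ factor, which your transvection remark does establish, and then $H_1/(\text{image of }K)$ is visibly a quotient of $\bigwedge^3\F_p^{2g}$. With that adjustment your cardinality squeeze against the surjectivity of $\tau_1^Z$ goes through cleanly.
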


We note that Boggi and Pikaart provide an earlier, algebo-geometic proof of the above theorem (see Corollary 3.11 in \cite{Boggi}).

\begin{thm}
For $g\geq 3$, $\K_{g,1}^S = ker(\tau_1^S)$ is generated by Dehn twists about separating curves, $p^{th}$-powers of bounding pair maps, and $p^2$-powers of Dehn twists.
\end{thm}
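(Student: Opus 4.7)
The plan is to establish both containments. That the listed elements lie in $\K_{g,1}^S$ is straightforward: separating twists live in $\K_{g,1} \subset \K_{g,1}^S$, since $\G_3 \subset \G_3^S$. For any $\phi \in Mod_{g,1}[p]$ the codomain $\text{Hom}(H_1(\Sigma_{g,1};\F_p), \el_2^S)$ is an $\F_p$-vector space, so $\tau_1^S(\phi^p) = p\cdot \tau_1^S(\phi) = 0$; applying this with $\phi$ a bounding pair map (which lies in $\I_{g,1} \subset Mod_{g,1}[p]$) yields $b^p \in \K_{g,1}^S$, and with $\phi = T_c^p$ (which lies in $Mod_{g,1}[p]$ since $T_c$ acts on $H$ by a transvection) yields $T_c^{p^2} \in \K_{g,1}^S$.

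For the reverse containment, I would take $f \in \K_{g,1}^S$ and first observe that the abelianization of $\G/\G_3^S$ is $H_1(\Sigma_{g,1};\Z/p^2)$, forcing $\K_{g,1}^S \subset Mod_{g,1}[p^2]$. Using $Mod_{g,1}[p] = \langle \I_{g,1}, T_c^p\rangle$ and the conjugation identity $\phi T_c^p\phi^{-1} = T_{\phi(c)}^p$, I would then write $f = \phi \cdot \prod_i T_{c_i}^{p\epsilon_i}$ with $\phi \in \I_{g,1}$. A direct computation of $T_c^p(x)x^{-1}$ modulo $\G_3^S$ gives $\tau_1^S(T_c^p)([x]) = \langle [x],[c]\rangle \cdot [c^p]$, so that $\tau_1^S(T_c^p) = [c]\otimes [c]$ sits in the $\mathfrak{sp}_{2g}(\F_p)$-summand of the image (under the decomposition $\el_2^S \cong \bigwedge^2 H_1(\Sigma_{g,1};\F_p) \oplus H_1(\Sigma_{g,1};\F_p)$, valid for odd $p$), whereas $\tau_1^S|_{\I_{g,1}}$ coincides with $\tau_1 \bmod p$ and lies in the $\bigwedge^3 H_1(\Sigma_{g,1};\F_p)$-summand. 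Hence $\tau_1^S(f) = 0$ decomposes into $\sum_i \epsilon_i [c_i]\otimes [c_i] = 0$ in $\text{Sym}^2 H_1(\Sigma_{g,1};\F_p)$ and $\tau_1(\phi) \in p\cdot \bigwedge^3 H_1(\Sigma_{g,1};\Z)$.

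The first relation forces the image of $\prod_i T_{c_i}^{p\epsilon_i}$ in $Sp_{2g}(\Z)$ to lie in the kernel of $Sp_{2g}(\Z) \to Sp_{2g}(\Z/p^2)$. Appealing to the classical fact that this kernel is generated by $p^2$-th powers of symplectic transvections (a Mennicke/Bass-Milnor-Serre-type congruence subgroup result), I would rewrite $\prod_i T_{c_i}^{p\epsilon_i} = \prod_j T_{d_j}^{p^2\delta_j}\cdot \psi$ for some $\psi \in \I_{g,1}$, giving $f = \phi'\cdot \prod_j T_{d_j}^{p^2\delta_j}$ with $\phi' = \phi\psi \in \I_{g,1}\cap\K_{g,1}^S$. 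The second condition then reads $\tau_1(\phi') \in p\cdot \bigwedge^3 H_1(\Sigma_{g,1};\Z)$; writing $\tau_1(\phi') = p\sum_j n_j\tau_1(b_j) = \tau_1(\prod_j b_j^{p n_j})$ for bounding pair maps $b_j$ (using that $\{\tau_1(b_j)\}$ generates $\bigwedge^3 H_1(\Sigma_{g,1};\Z)$), I would conclude that $\phi'\cdot (\prod_j b_j^{p n_j})^{-1} \in \ker(\tau_1) = \K_{g,1}$, which Johnson's theorem describes as generated by separating twists. Since $b_j^{p n_j} = (b_j^p)^{n_j}$, this places $f$ in the subgroup generated by the three listed families.

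The hard part will be the appeal to the generation of the level-$p^2$ principal congruence subgroup of $Sp_{2g}(\Z)$ by $p^2$-th powers of symplectic transvections; this is a standard congruence subgroup fact but must be carefully cited or independently verified in order to peel off the $\mathfrak{sp}_{2g}(\F_p)$-contribution. A secondary technical point is the identification $\el_2^S \cong \bigwedge^2 H_1(\Sigma_{g,1};\F_p) \oplus H_1(\Sigma_{g,1};\F_p)$ for odd $p$, together with the verification that the Torelli and $T_c^p$ contributions to $\tau_1^S$ occupy complementary summands of the image.
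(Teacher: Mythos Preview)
Your proposal is correct and relies on the same key ingredients as the paper's argument: the direct-sum decomposition of $\tau_1^S$ into a $\bigwedge^3 H_1(\Sigma_{g,1};\F_p)$-piece coming from $\I_{g,1}$ and an $\mathfrak{sp}_{2g}(\F_p)$-piece coming from $D_p$; the identification $\ker(abel)=Sp_{2g}(\Z)[p^2]$ together with the Bass--Milnor--Serre congruence subgroup result applied at level $p^2$; and Johnson's theorem that $\K_{g,1}$ is generated by separating twists.

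The packaging differs. The paper applies a general ``lift relators of a presentation of the quotient'' lemma to the short exact sequence $1\to\K_{g,1}^S\to Mod_{g,1}[p]\to image(\tau_1^S)\to 1$, listing relators in $\bigwedge^3\F_p^{2g}\oplus\mathfrak{sp}_{2g}(\F_p)$ and chasing their lifts; it then upgrades from normal generation to honest generation via the conjugation identity for Dehn twists. You instead take an arbitrary $f\in\K_{g,1}^S$, factor it as (Torelli element)$\cdot$(word in $D_p$), use the direct-sum decomposition to read off two independent vanishing conditions, and peel off $D_{p^2}$-factors and $p^{th}$-powers of bounding pairs in turn. Your route is slightly more elementary and self-contained, since it avoids the presentation-theoretic machinery; the paper's route is more systematic and makes the parallel with the Zassenhaus case transparent. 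Your observation that $\K_{g,1}^S\subset Mod_{g,1}[p^2]$ via the abelianization of $\G/\G_3^S$ is a nice shortcut the paper does not make explicit. The ``hard part'' you flag is exactly what the paper invokes as well, so there is no gap.
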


\subsection{Perron's mod-$p$ Johnson filtration}In his research announcement \cite{Perron2}, Perron also defined a mod-$p$ version of Johnson's filtration of 
the mapping class group. His definition makes use of a mod-$p$ Magnus representation and conditions on evaluations of certain higher order Fox derivative terms. We show in Section \ref{Section:FoxCalc} 
that this filtration coincides with the Zassenhaus filtration $\{\I_{g,1}^Z(k)\}$ as we define it. This makes
use of a characterization of the Zassenhaus series and filtration in terms of Fox calculus, which we also review
in Section \ref{Section:FoxCalc}.

\subsection{Rational homology spheres} We end with Section \ref{Section:QHS}, giving a proof of an announced result of Bernard Perron:

\begin{thm}
If $M$ is a rational homology 3-sphere and $p$ is a prime relatively prime to $|H_1(M;\Z)|$ then $M$ has a Heegaard splitting $U \cup_f V$ of some genus $g$ with gluing map $f \in Mod_{g,1}[p]$.
\end{thm}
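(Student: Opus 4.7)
My plan is to start from an arbitrary Heegaard splitting $M = U \cup_f V$ of some genus $g$ and modify the gluing map $f$ by pre- and post-composing with handlebody homeomorphisms---operations that preserve the diffeomorphism type of $M$---until the resulting gluing acts trivially on $H_1(\Sigma;\F_p)$, where $\Sigma$ is the Heegaard surface. Let $L_U, L_V \subset H_1(\Sigma;\F_p)$ denote the Lagrangian subspaces that die under the inclusions into $U$ and $V$ respectively. The handlebody groups $\mathcal{H}^U, \mathcal{H}^V$ are classically known to surject onto the integral Siegel parabolics $Stab_{Sp(2g,\Z)}(L_U)$ and $Stab_{Sp(2g,\Z)}(L_V)$, and I will use the mod-$p$ analogue of this.

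The crucial input from the hypothesis is a mod-$p$ transversality. By Mayer--Vietoris one has $H_1(M;\F_p) \cong H_1(\Sigma;\F_p)/(L_U + f_\ast(L_V))$, and this vanishes because $|H_1(M;\Z)|$ is coprime to $p$. Since $L_U$ and $f_\ast(L_V)$ are each $g$-dimensional Lagrangians whose sum is all of $H_1(\Sigma;\F_p)$, they must in fact be transverse.

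Once transversality is in place, I would use that $Sp(2g,\F_p)$ acts transitively on ordered pairs of transverse Lagrangians to pick $\alpha_\ast \in Stab_{Sp(2g,\F_p)}(L_U)$ taking $f_\ast(L_V)$ to $L_V$, and lift $\alpha_\ast$ to some $\alpha \in \mathcal{H}^U$. Then $(\alpha f)_\ast$ preserves $L_V$ mod $p$ and so lies in $Stab_{Sp(2g,\F_p)}(L_V)$, which is the mod-$p$ image of $\mathcal{H}^V$; lifting again gives $\beta \in \mathcal{H}^V$ with $\beta_\ast \equiv (\alpha f)_\ast$ mod $p$. The modified gluing $g := \alpha \circ f \circ \beta^{-1}$ then satisfies $g_\ast \equiv \mathrm{id}$ on $H_1(\Sigma;\F_p)$, so $g \in Mod_{g,1}[p]$; and because $\alpha$ extends to $U$ and $\beta^{-1}$ extends to $V$, the splitting $U \cup_g V$ still recovers $M$.

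The main technical point to justify carefully is the mod-$p$ surjection $\mathcal{H}^U \twoheadrightarrow Stab_{Sp(2g,\F_p)}(L_U)$ (and similarly for $V$). The integral statement is classical, and the mod-$p$ lift then reduces to surjectivity of $GL(g,\Z) \to GL(g,\F_p)$ together with the obvious surjection on symmetric integer matrices, using the Siegel parabolic's semidirect-product structure $GL(g) \ltimes \mathrm{Sym}(g)$. A smaller bookkeeping issue is moving between the closed-surface mapping class group, where the gluing naturally lives, and the boundary version $Mod_{g,1}$; this is handled by representing mapping classes by homeomorphisms fixing an embedded disk, as is standard in the Johnson theory.
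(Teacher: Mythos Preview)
Your proposal is correct and follows the same strategy as the paper: start from an arbitrary gluing, use the vanishing of $H_1(M;\F_p)$ to get mod-$p$ transversality of the two Lagrangians, and then compose on each side by handlebody mapping classes to kill the mod-$p$ symplectic action. The paper carries this out by an explicit block computation---writing $\Psi_p(f^{-1}) = \left(\begin{smallmatrix} E & F \\ G & H \end{smallmatrix}\right)$, observing that transversality forces $H$ to be invertible, and clearing the off-diagonal block with a unipotent $\left(\begin{smallmatrix} I & -FH^{-1} \\ 0 & I \end{smallmatrix}\right)$---whereas you phrase the same step as Witt-type transitivity of $Sp_{2g}(\F_p)$ on ordered pairs of transverse Lagrangians together with the surjection of the handlebody group onto the mod-$p$ Siegel parabolic. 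These are the same argument in coordinate and coordinate-free language; the surjectivity you flag as the main technical point is exactly what the paper uses implicitly when it asserts that its block-triangular matrices lift to handlebody maps.
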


This generalizes the statement that integral homology 3-spheres are obtained analogously from mapping classes 
in $\I_{g,1}$, which also follows from our proof. To our knowledge this fills a gap in the literature.
While this statement is generally known, we do not know a published proof of it.

Pitsch shows that in fact all integral homology 3-spheres can be obtained from the term $\I_{g,1}(3)$ of the 
Johnson filtration \cite{Pitsch}. With this restriction we offer a similar restriction in 
the rational homology 3-sphere case.

\begin{thm}
If $M$ is a rational homology 3-sphere and $p$ is a prime relatively prime to $|H_1(M;\Z)|$ then $M$ has a 
Heegaard splitting $U \cup_f V$ of some genus $g$ with gluing map $f$ a product of terms in $\I_{g,1}(3)$ and 
$p^{th}$-powers of Dehn twists. In particular, $f\in \I_{g,1}^Z(3)$ if $p\geq 5$.
\end{thm}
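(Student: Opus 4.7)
The plan is to combine the previous theorem with Pitsch's argument, using the structure of $Mod_{g,1}[p]$ and a direct computation in the Zassenhaus series.

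By the previous theorem, $M$ admits a Heegaard splitting $U\cup_f V$ with $f\in Mod_{g,1}[p]$. I would first factor $f$ using the fact that $Mod_{g,1}[p]$ is generated by $\I_{g,1}$ together with $p^{\text{th}}$-powers of Dehn twists. This follows, for example, because $Sp_{2g}(\Z)[p]$ is generated by $p^{\text{th}}$-powers of symplectic transvections (a classical result of Mennicke for $p$ odd prime, $g\geq 2$), and each transvection lifts to a Dehn twist under $Mod_{g,1}\twoheadrightarrow Sp_{2g}(\Z)$. Thus we may write
\[
f = h\cdot T_{c_1}^p\cdots T_{c_n}^p
\]
for some $h\in\I_{g,1}$ and simple closed curves $c_i$. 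This is purely an algebraic factorization inside $Mod_{g,1}$.

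Next, I would invoke Pitsch's theorem (more precisely, the argument in its proof) applied to $h\in\I_{g,1}$: after possibly stabilizing and modifying by $\phi_V\in Mod_{g,1}(V)\cap\I_{g,1}$ on the left and $\phi_U\in Mod_{g,1}(U)\cap\I_{g,1}$ on the right, we have $h' := \phi_V h\phi_U\in\I_{g,1}(3)$. The same handlebody modifications applied to $f$ preserve the $3$-manifold $M$, and using the conjugation identity $\phi_U^{-1}T_c\phi_U = T_{\phi_U^{-1}(c)}$ we obtain
\[
\phi_V f\phi_U = (\phi_V h\phi_U)\cdot \phi_U^{-1}(T_{c_1}^p\cdots T_{c_n}^p)\phi_U = h'\cdot T_{c'_1}^p\cdots T_{c'_n}^p
\]
with $c'_i = \phi_U^{-1}(c_i)$. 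This exhibits the gluing map of a Heegaard splitting of $M$ as a product of elements of $\I_{g,1}(3)$ and $p^{\text{th}}$-powers of Dehn twists.

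For the ``in particular'' statement, I would argue directly that $T_c^p\in\I_{g,1}^Z(3)$ when $p\geq 5$. Since $p\geq 4$, we have $\G^p\subset\G_4^Z$ from the defining formula $\G_4^Z = \prod_{ip^j\geq 4}(\G_i)^{p^j}$ (take $i=1$, $j=1$). For any simple closed curve $c$ and any $x\in\G$, the standard action formula for a Dehn twist shows that $T_c^p(x)x^{-1}$ lies in the normal closure of $c^p$ in $\G$, which is contained in the normal subgroup $\G_4^Z$. Hence $T_c^p\in\I_{g,1}^Z(3)$. Combined with the inclusion $\I_{g,1}(3)\subset\I_{g,1}^Z(3)$ (immediate from $\G_4\subset\G_4^Z$), this gives $f\in\I_{g,1}^Z(3)$.

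The main obstacle will be extracting Pitsch's argument from the integer homology sphere setting. Pitsch cancels the Johnson obstructions $\tau_1(h)$ and $\tau_2(h)$ using surjectivity of certain maps from the handlebody-Torelli subgroups onto the relevant ``Lagrangian'' pieces of $\bigwedge^3 H$ and of the second Johnson image. These surjectivity statements are intrinsic properties of the handlebody subgroups and do not depend on the specific $h$, so his inductive argument applies verbatim to any $h\in\I_{g,1}$, not merely to those producing integer homology spheres; but verifying this independence carefully and tracking the required stabilizations through the factorization $f = h\cdot T_{c_1}^p\cdots T_{c_n}^p$ is where the bulk of the work lies.
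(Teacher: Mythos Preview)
Your proposal is correct and follows essentially the same route as the paper: factor $f\in Mod_{g,1}[p]$ as an element of $\I_{g,1}$ times an element of $\langle D_p\rangle$, apply Pitsch's argument to the Torelli factor, and use the conjugation identity $hT_c h^{-1}=T_{h(c)}$ (equivalently, normality of $\langle D_p\rangle$) to keep the second factor in $\langle D_p\rangle$. The paper cites Pitsch as a black box (``$f_1$ is equivalent to some $f_1'\in\I_{g,1}(3)$'') rather than unpacking the handlebody--Torelli surjectivity statements, and it does not restrict $h_1,h_2$ to lie in $\I_{g,1}$ since normality of $\langle D_p\rangle$ in all of $Mod_{g,1}$ suffices; your more detailed verification that $T_c^p\in\I_{g,1}^Z(3)$ for $p\geq 5$ is also simply asserted in the paper. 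The ``main obstacle'' you flag is therefore not really an obstacle here: the paper treats Pitsch's conclusion as input, and your observation that his argument applies to any $h\in\I_{g,1}$ (not just those yielding a given ZHS) is exactly what makes that citation legitimate.
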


\subsection{Outline} In Section 2 we introduce and discuss the Stallings and Zassenhaus central series.
Section 3 contains the definitions of our mod-$p$ Johnson filtrations and homomorphisms. We show that Perron's mod-$p$
filtration is equal to the Zassenhaus filtration in Section 4. In Section 5 we calculate the images of $\tau_1^S$ and
$\tau_1^Z$. Section 6 discusses how Morita's image restriction for the classical Johnson homomorphisms offers a
restriction for our mod-$p$ Johnson homomorphisms. Generating sets for $ker(\tau_1^S)$ and $ker(\tau_1^Z)$ are
obtained in Section 7. We end the paper with Section 8, where we show that all rational homology 3-spheres may
be obtained as a Heegaard splitting with gluing map in $\I_{g,1}^Z(3)$ for an appropriate odd prime $p$.

\subsection{Acknowledgments} I would like to thank my doctoral advisor Andrew Putman for introducing me to the work of Dennis Johnson and for his guidance throughout the development of this project. I would also like to thank Corey Bregman and David Cohen for several conversations that helped in the writing and exposition of this paper.

%%%%%%%%%%%%%%%%%%%%%%%%%%%%%%%%%%%%%%%%%%%%%%%%%%%%
%%%%%%%%%%%%%%%%%%%%%%%%%%%%%%%%%%%%%%%%%%%%%%%%%%%%

\section{Filtrations of a Free Group}\label{Section:Series}

Throughout this paper we will let $\G$ denote $\pi_1(\Sigma_{g,1})$, the fundamental group of an orientable
surface of genus $g$ with 1 boundary component. We take the basepoint to be on this boundary component. 
This group is isomorphic to the free group with $2g$ generators. When an explicit generating set is needed
it will be written $\{x_1, x_2,..., x_{2g}\}$. We will use $[g,h] = ghg^{-1}h^{-1}$ as our notation for the
commutator of $g, h \in \G$. If $G_1, G_2$ are two subgroups of $\G$ then $[G_1,G_2] < \G$ is the subgroup 
generated by commutators of elements in $G_1$ with elements in $G_2$.

We recall the definitions of each series and offer another characterization. This characterization is in terms of their behavior under commutator and $p^{th}$-power operations.

\begin{definition}
The lower central series $\{\G_k\}$ for $\G$ is defined recursively by $\G_1 = \G$ and $\G_k = [\G,\G_{k-1}]$. The lower central series is the fastest descending series with respect to the property:
\begin{itemize}
\item $[\G_k,\G_l] < \G_{k+l}$.
\end{itemize}
\end{definition}

\begin{definition}[Zassenhaus, {\cite{Zassenhaus}}]
The Zassenhaus mod-p central series $\{\G_k^Z\}$ for $\G$ is defined by $\G_k^Z = \prod\limits_{ip^j\geq k}{\G_i}^{p^j}$. This series is fastest descending series with respect to the properties:
\begin{itemize}
\item $[\G_k^Z,\G_l^Z] < \G_{k+l}^Z$
\item $(\G_k^Z)^p < \G_{pk}^Z$
\end{itemize}
\end{definition}

\begin{definition}[Stallings, {\cite{Stallings}}]
The Stallings mod-p central series $\{\G_k^S\}$ for $\G$ is defined recursively by $\G_1^S = \G$ and $\G_k^S = [\G,\G_{k-1}^S](\G_{k-1}^S)^p$. This series is the fastest descending series with respect to the properties:
\begin{itemize}
\item $[\G_k^S,\G_l^S] < \G_{k+l}^S$
\item $(\G_k^S)^p < \G_{k+1}^S$
\end{itemize}
\end{definition}

First note that these filtrations are distinct. They differ in their behavior under taking $p^{th}$-powers of elements. In particular, we have the following differences for $w\in\G$ and $w\notin\G_2$:
\begin{itemize}
\item $w^p\in\G$ and $w^p\notin\G_2$ since the image of $w^p$ in $H_1(\Sigma_{g,1};\Z) \cong \G/\G_2$ is $p[w]$, which is nontrivial when $w$ is nontrivial.
\item $w^p\in\G_2^S$ and $w^p\notin\G_3^S$. This follows from the fact that $w^p$ has trivial image in $H_1(\Sigma_{g,1};\F_p)$ and nontrivial image in $\el_2^S$. This is seen to be the case in Section \ref{Section:Images}.
\item $w^p\in\G_p^Z$ and $w^p\notin\G_{p+1}^Z$. This can be checked using a calculation similar to those given in Section \ref{Section:FoxCalc}.
\end{itemize}

Our two mod-p central series share their behavior under taking commutators. Namely, they enjoy the properties:
\begin{itemize}
\item $[\G, \G_{k-1}^Z] < \G_k^Z$
\item $[\G, \G_{k-1}^S] < \G_k^S$
\end{itemize}
\noindent We therefore have the following proposition:

\begin{prop}
The Stallings and Zassenhaus mod-p central series are indeed central, hence normal, filtrations of the group $\G$.
\end{prop}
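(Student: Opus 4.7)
The plan is to reduce the proposition to the centrality property $[\G,\G_k^*]<\G_{k+1}^*$ (for $* = S$ or $Z$), since any central filtration of a group is automatically a normal filtration: if $h\in\G_k^*$ and $g\in\G$, then $ghg^{-1} = [g,h]\,h$ lies in $\G_{k+1}^*\cdot\G_k^* = \G_k^*$, so each $\G_k^*$ is normal in $\G$.

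For the Stallings series, centrality is immediate from the recursive definition itself. By construction, $\G_k^S = [\G,\G_{k-1}^S]\,(\G_{k-1}^S)^p$, so the subgroup $[\G,\G_{k-1}^S]$ is literally one of the two factors generating $\G_k^S$, giving $[\G,\G_{k-1}^S]<\G_k^S$ at once. Shifting indices, $[\G,\G_k^S]<\G_{k+1}^S$.

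For the Zassenhaus series, I would first observe that $\G_1^Z = \G$: the product $\prod_{ip^j\geq 1}\G_i^{p^j}$ includes the factor $(i,j) = (1,0)$, i.e.\ $\G_1 = \G$ itself, and every other factor is a subgroup of $\G$. Then I would invoke the commutator property $[\G_k^Z,\G_l^Z]<\G_{k+l}^Z$, already listed as one of the characterizing properties of the Zassenhaus series, and specialize to $l=1$ to conclude $[\G,\G_k^Z] = [\G_1^Z,\G_k^Z] < \G_{k+1}^Z$. Combined with the first paragraph, this yields both centrality and normality.

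There is essentially no obstacle here, because the hard work has been absorbed into the definitions: the characterizing commutator property for the Zassenhaus series is attributed to Zassenhaus and the analogous Stallings property is built into the recursive definition. The one bookkeeping point worth flagging is the verification that $\G_1^Z = \G$ from the explicit product formula, which is what lets us specialize the bilinear commutator bound to the centrality bound. If one wanted a self-contained proof of the Zassenhaus commutator property (rather than quoting it), the standard route is to expand commutators of $p^j$th powers of elements of $\G_i$ using the Hall--Petresco identity modulo higher terms, but in the present exposition this is taken as given.
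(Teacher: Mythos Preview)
Your proposal is correct and matches the paper's own reasoning: the paper does not give a separate proof but simply observes, just before the proposition, that $[\G,\G_{k-1}^Z]<\G_k^Z$ and $[\G,\G_{k-1}^S]<\G_k^S$ (the former from the characterizing commutator property, the latter from the recursive definition) and states the proposition as an immediate consequence. Your write-up is slightly more explicit---spelling out why $\G_1^Z=\G$ and why centrality implies normality---but the approach is the same.
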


We also note that $(\G_{k-1}^S)^p \in \G_k^S$ and $(\G_{k-1}^Z)^p \in \G_k^Z$. This together with the previous proposition gives a nice property of the quotients $\el_k^S = \G_k^S/\G_{k+1}^S$ and $\el_k^Z = \G_k^Z/\G_{k+1}^Z$. Namely, since the quotients are abelian and every element has order $p$ we have the following proposition.

\begin{prop}
The quotients $\el_k^S$ and $\el_k^Z$ are $\F_p$-vector spaces.
\end{prop}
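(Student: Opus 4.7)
The plan is to verify that each $\el_k^*$ is an abelian group in which $p$ annihilates every element; once this is in hand, the $\F_p$-vector space structure comes for free via the rule $\bar n \cdot [g] := [g^n]$, which is well-defined precisely because any two integer lifts of $\bar n \in \F_p$ differ by a multiple of $p$ and the target is $p$-torsion.

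For abelianness I would invoke centrality, which the preceding proposition already records. Since $[\G, \G_k^*] < \G_{k+1}^*$ for $* = S, Z$, in particular $[\G_k^*, \G_k^*] < \G_{k+1}^*$, so the quotient $\G_k^*/\G_{k+1}^*$ is abelian. Alternatively, one can derive this directly from the bilinear commutator property $[\G_k^*, \G_l^*] < \G_{k+l}^*$ by taking $l = k$ and observing that $\G_{2k}^* < \G_{k+1}^*$ for $k \geq 1$ by monotonicity.

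For the $p$-torsion statement I would split the two cases. In the Stallings case the inclusion $(\G_k^S)^p < \G_{k+1}^S$ is literally part of the recursive definition $\G_{k+1}^S = [\G,\G_k^S](\G_k^S)^p$, so there is nothing to do. In the Zassenhaus case the defining $p^{th}$-power property gives only $(\G_k^Z)^p < \G_{pk}^Z$, which is indexed by $pk$ rather than by $k+1$. Since $pk \geq k+1$ for every $k \geq 1$ (equivalently $(p-1)k \geq 1$), monotonicity of the filtration yields $\G_{pk}^Z < \G_{k+1}^Z$, and composing the two inclusions gives $(\G_k^Z)^p < \G_{k+1}^Z$ as required.

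No real obstacle arises: the claim is essentially a repackaging of the defining commutator and $p^{th}$-power inequalities of the two series together with the centrality just proved. The only mild wrinkle worth flagging is the small index arithmetic in the Zassenhaus case, since the intrinsic bound there is $\G_{pk}^Z$ rather than $\G_{k+1}^Z$ and must be pushed down along the filtration before concluding.
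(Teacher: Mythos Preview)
Your proposal is correct and matches the paper's own argument essentially verbatim: the paper also deduces abelianness from the centrality recorded in the preceding proposition and then notes that $(\G_k^*)^p < \G_{k+1}^*$ in both cases to conclude every element is $p$-torsion. Your write-up is in fact more explicit than the paper's, which leaves the Zassenhaus index arithmetic $pk \geq k+1$ implicit.
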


While it is true that the two mod-p central series are distinct, we do have the following relation.
We say two series $\{G_i\}$ and $\{H_i\}$ are cofinal if for every natural number $n$ there exists a natural 
number $m$ such that $G_m < H_n$, and similarly for every $l$ there is some $k$ such that $H_k < G_l$.

\begin{prop}
The Stallings and Zassenhaus mod-p central series are cofinal.
\end{prop}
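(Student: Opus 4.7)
The plan is to establish the two cofinality inclusions separately, one from the universal properties and one from the explicit formula for $\{\G_k^Z\}$.

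First I would prove that $\G_n^S < \G_n^Z$ for every $n$, so that the choice $m = n$ suffices for one direction of cofinality. This reduces to checking that the Zassenhaus series already satisfies the two defining properties of the Stallings series. The commutator inclusion $[\G_k^Z, \G_l^Z] < \G_{k+l}^Z$ is immediate, and the Stallings-type $p$-th power inclusion $(\G_k^Z)^p < \G_{k+1}^Z$ follows by combining $(\G_k^Z)^p < \G_{pk}^Z$ with the trivial containment $\G_{pk}^Z < \G_{k+1}^Z$, valid since $pk \geq k+1$ for $k \geq 1$. Because $\{\G_k^S\}$ is the fastest descending series with these two properties, this forces $\G_n^S < \G_n^Z$.

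Next I would show that for each $l$ there is some $k$ with $\G_k^Z < \G_l^S$, and I expect $k = p^{l-1}$ to work. Using the explicit definition $\G_k^Z = \prod_{ip^j \geq k} \G_i^{p^j}$, it is enough to show that every generator $\gamma^{p^j}$ with $\gamma \in \G_i$ and $ip^j \geq p^{l-1}$ lies in $\G_l^S$. Two short inductions do the work: the inclusion $\G_i < \G_i^S$ follows by induction from $[\G, \G_{i-1}^S] < \G_i^S$, and iterating $(\G_m^S)^p < \G_{m+1}^S$ gives $\gamma \in \G_i^S \Rightarrow \gamma^{p^j} \in \G_{i+j}^S$. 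So each generator lies in $\G_{i+j}^S$, and it remains to verify $i + j \geq l$ whenever $ip^j \geq p^{l-1}$.

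This final inequality is the one genuinely quantitative step. It suffices to show $\max\{ip^j : i \geq 1,\ j \geq 0,\ i + j \leq l - 1\} < p^{l-1}$. With $j$ taken maximal, the function $f(i) = ip^{l-1-i}$ satisfies $f(i+1)/f(i) = (i+1)/(ip) \leq 1$ for $i \geq 1$, $p \geq 2$, so $f$ is non-increasing with maximum $f(1) = p^{l-2} < p^{l-1}$. Combining the pieces gives $\G_{p^{l-1}}^Z < \G_l^S$, completing cofinality. The main pitfall is in the first step rather than the third: it is easy to confuse ``descends fastest'' with ``is largest'' and flip the direction of containment. The paper's own observation that $w^p \in \G_3^Z \setminus \G_3^S$ for $p \geq 3$ is a useful sanity check, showing that the two series really do differ at each level and that the two cofinality inclusions must be handled asymmetrically.
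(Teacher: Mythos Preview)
Your proposal is correct and follows the same two-step skeleton as the paper: show $\G_n^S < \G_n^Z$ for all $n$, then show $\G_{p^m}^Z < \G_l^S$ for appropriate $m$. The execution differs in both halves, and in each case yours is a bit cleaner.

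For the first inclusion the paper does an explicit induction on $n$, writing an element of $\G_n^S$ as a product $\prod [x_i,y_i]\prod z_i^p$ and pushing each factor into $\G_n^Z$. Your appeal to the ``fastest descending'' characterization of the Stallings series short-circuits this: once you observe $(\G_k^Z)^p < \G_{pk}^Z < \G_{k+1}^Z$, the universal property hands you $\G_n^S < \G_n^Z$ immediately. For the second inclusion the paper proves $\G_{p^l}^Z < \G_l^S$ by reducing to $i+j \geq l$ whenever $ip^j \geq p^l$, and then bounds $p^{l-j}+j$ from below by a real-variable calculus optimization. Your discrete monotonicity argument (maximize $ip^j$ on the simplex $i+j \leq l-1$, note the maximum sits at $i=1$ on the boundary and equals $p^{l-2}$) is more elementary and actually gives the slightly sharper conclusion $\G_{p^{l-1}}^Z < \G_l^S$. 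Both versions rely on the same two auxiliary facts, $\G_i < \G_i^S$ and $(\G_m^S)^{p^j} < \G_{m+j}^S$, which you state correctly.
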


\begin{proof}
For the first inclusion we note that $\G_n^S < \G_n^Z$. To see this we use the fact $\G_2^S = \G_2^Z$ as a base case
for an induction on $n$. We may write an element $w\in\G_n^S$ as a product $\prod [x_i,y_i]\prod z_i^p$ with 
$x_i \in \G$ and $y_i, z_i \in \G_{n-1}^S$. By the inductive hypothesis, $\G_{n-1}^S < \G_{n-1}^Z$. Noting the
properties of the Zassenhaus series, we see for each $i$ that $[x_i,y_i] \in \G_n^Z$ and $z_i^p \in \G_{p(n-1)}^Z < \G_n^Z$.
We conclude that $w\in\G_n^Z$.

For the second inclusion we show for fixed $l$ that $\G_{{p}^l}^Z < \G_l^S$. Let $w \in \G_{{p}^l}^Z$ and
write $w = \prod y_k^{{p}^{{j}_k}}$ where each $y_k \in \G_{{i}_k}$ with $i_kp^{{j}_k} \geq p^l$. The behavior
of the Stallings series with respect to commutators and $p^{th}$-powers lets us see that each term $y_k^{{p}^{{j}_k}}$
lies in $\G_{{i}_k+{j}_k}^S$. To see that $w\in \G_l^S$ it therefore suffices to show that $i_k + j_k \geq l$ 
for all $k$.

We first note that $i_k\geq p^{l-{j}_k}$ so that $i_k + j_k\geq p^{l-{j}_k}+j_k$. A straight forward calculation
shows that $p^{l-{j}_k}+j_k$ is minimized as a function of $j_k$ at $j_k = l - log_p(1/ln(p))$, where it achieves
the minimum value:

\[
 \frac{1-ln(1/ln(p))}{ln(p)} + l
\]

\noindent We wish to show that this is always at least $l$. Equivalently, we want $ln(1/ln(p)) \leq 1$. This follows from the fact that $ln(1/ln(p)) = -ln(ln(p)) < 0$. \qedhere
\end{proof}

%%%%%%%%%%%%%%%%%%%%%%%%%%%%%%%%%%%%%%%%%%%%%%%%%%%%
%%%%%%%%%%%%%%%%%%%%%%%%%%%%%%%%%%%%%%%%%%%%%%%%%%%%

\section{Johnson style filtrations and homomorphisms}
\label{Section:JohnsonFiltAndHomo}
Now that we have our two series of $\G$ we may consider the filtrations they give of the mapping class group 
of $\Sigma_{g,1}$, which we write $Mod_{g,1}$. We will denote these filtrations by $\{\I_{g,1}^S(k)\}$ 
and $\{\I_{g,1}^Z(k)\}$ for Stallings and Zassenhaus respectively. First we recall our notation for commonly 
appearing quotients. Let $\N_k^* = \G/\G_{k+1}^*$ and $\el_k^* = \G_k^*/\G_{k+1}^*$, where $*$ is $S$ for the Stallings series, $Z$ for the Zassenhaus series, or empty for the lower central series. We now recall the definition of Johnson's original filtration.

\begin{definition}
The Johnson filtration of $Mod_{g,1}$ is given by $\I_{g,1}(k) = \{f \in Mod_{g,1} | f(x)x^{-1} \in \G_{k+1}\text{ for all }x \in \G\}$.
\end{definition}

Again we note that $\I_{g,1}(k)$ is just the subgroup of $Mod_{g,1}$ consisting of mapping classes that induce a trivial action on the quotient $\N_k$. We now offer by the appropriate substitution our mod-p filtrations, which are similarly given in terms of mapping classes that act trivially on $\N_k^Z$ and $\N_k^S$.

\begin{definition}
The Zassenhaus mod-p Johnson filtration for $Mod_{g,1}$ is $\I_{g,1}^Z(k) = \{f \in Mod_{g,1} | f(x)x^{-1} \in \G_{k+1}^Z\text{ for all } x \in \G\}$.
\end{definition}

\begin{definition}
The Stallings mod-p Johnson filtration for $Mod_{g,1}$ is $\I_{g,1}^S(k) = \{f \in Mod_{g,1} | f(x)x^{-1} \in \G_{k+1}^S\text{ for all } x \in \G\}$.
\end{definition}

Now we wish to discuss the Johnson homomorphisms associated to each filtration of $Mod_{g,1}$. Again, we begin by recalling Johnson's original construction. For each $k$ we define a map:

\begin{align*}
\widetilde{\tau}_k : &\I_{g,1}(k)\longrightarrow Hom(\G, \el_{k+1}) \\
	 &f\mapsto(x \mapsto f(x)x^{-1}\ \text{mod}\ \G_{k+2})
\end{align*}

\noindent Johnson proved that $\widetilde{\tau}_k(f)$ is indeed a homomorphism. We recall the argument below.

The quotients $\el_k$ are abelian so that by the universal property of the abelianization $H = H_1(\Sigma_{g,1};\Z)$
of $\G$ each map $\widetilde{\tau}_k(f)$ factors through $H$, allowing us to make the following definition.

\begin{definition}
The $k^{th}$ Johnson homomorphism $\tau_k:\I_{g,1}(k)\longrightarrow Hom(H, \el_{k+1})$ is the map defined by $\tau_k(f)([x]) = f(x)x^{-1}\ \text{mod}\ \G_{k+2}$.
\end{definition}

The construction of the mod-p Johnson homomorphisms similarly begins by defining a map 

\begin{align*}
\widetilde{\tau}_k^*:&\I_{g,1}^*(k) \longrightarrow Hom(\G, \el_{k+1}^*)\\
	&f \mapsto (x \mapsto f(x)x^{-1}\ \text{mod}\ \G_{k+2}^*)
\end{align*}

\noindent Below we show that the image under $\widetilde{\tau}_k^*$ of an element $f\in \I_{g,1}^*(k)$ is indeed a
homomorphism. Before stating the results needed to prove this we introduce some notation. For $f \in \I_{g,1}^*(k)$
and $x\in\G_l^*$ we write $[f,x] = x^fx^{-1}$ for $f(x)x^{-1}$. The use of this notation is justified since
expressions in $[\I_{g,1}^*(k), \G_l^*]$ satisfy commutator identities. For instance,
\begin{itemize}
\item $[f, yx] = [f, y][f, x]^y = f(y)y^{-1}yf(x)x^{-1}y^{-1} = f(y)f(x)x^{-1}y^{-1}$
\item $[fg, x] = [f, x]^g[g, x] = g(f(x)x^{-1})g(x)x^{-1} = g\circ f(x)x^{-1}$
\end{itemize}
\noindent This notation is useful for packaging the statements and proofs of the following results.

\begin{thm}[Andreadakis, {\cite{Andreadakis}, Theorem 1.1}]
The Johnson filtration behaves in the following way with respect to the commutator bracket:
\begin{enumerate}
\item $[\I_{g,1}(k), \G_l] < \G_{k+l}$
\item $[\I_{g,1}(k),\I_{g,1}(l)] < \I_{g,1}(k+l)$.
\end{enumerate}
\end{thm}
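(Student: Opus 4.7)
The plan is to prove (1) first by induction on $l$ (with $k$ fixed) using commutator calculus, and to deduce (2) from (1) via the composition formula for automorphisms of $\G$. Throughout I use the paper's notation $[f,x] = f(x)x^{-1}$, so that $f \in \I_{g,1}(k)$ iff $[f,x] \in \G_{k+1}$ for all $x \in \G$.

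For (1), fix $f \in \I_{g,1}(k)$ and induct on $l$. The identities $[f,ab] = [f,a]\cdot [f,b]^a$ (already recorded in the paper) and $[f,a^{-1}] = a^{-1}[f,a]^{-1}a$, combined with the normality of $\G_{k+l}$ in $\G$, show that $\{x \in \G : [f,x] \in \G_{k+l}\}$ is a subgroup of $\G$. Since $\G_l = [\G,\G_{l-1}]$ is generated as a subgroup by commutators $[y,z]$ with $y \in \G$ and $z \in \G_{l-1}$, the inductive step reduces to verifying $[f,[y,z]] \in \G_{k+l}$ on such generators. Writing $f(y) = y\cdot a$ with $a = [f,y] \in \G_{k+1}$ (base case $l=1$) and $f(z) = z\cdot b$ with $b = [f,z] \in \G_{k+l-1}$ (inductive hypothesis), expand
\[
[f,[y,z]] = [f(y),f(z)]\cdot [z,y] = [ya,zb]\cdot [z,y].
\]
Distributing $[ya,zb]$ via the standard identities $[uv,w] = [v,w]^u[u,w]$ and $[u,vw] = [u,v][u,w]^v$ yields a conjugate of $[y,z]$ that cancels $[z,y]$, together with several correction terms, each of which is a commutator with at least one entry drawn from $\{a,b\}$. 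Applying $[\G_i,\G_j] \subset \G_{i+j}$ to each correction (e.g.\ $[a,z] \in \G_{k+l}$, $[y,b] \in \G_{k+l}$, and $[a,b] \in \G_{2k+l} \subset \G_{k+l}$) completes the induction.

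For (2), given $f \in \I_{g,1}(k)$ and $g \in \I_{g,1}(l)$, I work with their induced automorphisms of $\G$ under function composition. The composition formula
\[
[fg,u] = f([g,u])\cdot [f,u]
\]
follows from $f(g(u))u^{-1} = f(g(u))f(u)^{-1}\cdot f(u)u^{-1}$. By (1), $f$ fixes $[g,u] \in \G_{l+1}$ modulo $\G_{k+l+1}$, so $[fg,u] \equiv [g,u]\cdot[f,u] \pmod{\G_{k+l+1}}$; swapping $f$ and $g$ yields $[gf,u] \equiv [f,u]\cdot[g,u] \pmod{\G_{k+l+1}}$. Consequently
\[
[fg,u]\cdot[gf,u]^{-1} \equiv [[g,u],[f,u]] \pmod{\G_{k+l+1}},
\]
and the right-hand side lies in $[\G_{l+1},\G_{k+1}] \subset \G_{k+l+2} \subset \G_{k+l+1}$. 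Taking $u = (gf)^{-1}(x)$, so that $(gf)(u) = x$, the left-hand side equals $(fg)(u)\cdot x^{-1} = [f,g](x)\,x^{-1} = [[f,g],x]$, giving $[[f,g],x] \in \G_{k+l+1}$ for every $x \in \G$; i.e., $[f,g] \in \I_{g,1}(k+l)$.

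The main obstacle is the bookkeeping in (1): even after reducing to commutator generators of $\G_l$, the raw expansion of $[ya,zb]$ produces several correction terms that must each be placed correctly in the lower central series. The cleanest conceptual framing is that (1) and (2) together express that $\I_{g,1}(k)$ acts by degree-$k$ derivations on the graded Lie ring $\bigoplus_i \G_i/\G_{i+1}$, with (2) becoming the standard fact that the commutator of a degree-$k$ and a degree-$l$ derivation has degree at least $k+l$.
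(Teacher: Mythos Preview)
Your argument is correct. The paper itself does not prove this theorem---it is quoted from Andreadakis---but it does reproduce Andreadakis's method when proving the mod-$p$ analogue (Lemma~\ref{lemma:JohnsonRange}), so that is the natural point of comparison.

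For part (1) the approaches differ. Andreadakis (as reproduced in the paper) works inside the holomorph $\G\rtimes\mathrm{Aut}(\G)$ and invokes the Three Subgroup Lemma: from $[[\I_{g,1}(k),\G_{l-1}],\G]<\G_{k+l}$ (induction) and $[[\I_{g,1}(k),\G],\G_{l-1}]<\G_{k+l}$ (base case plus $[\G_{k+1},\G_{l-1}]<\G_{k+l}$) one concludes $[\I_{g,1}(k),[\G,\G_{l-1}]]<\G_{k+l}$ in a single stroke. Your approach instead expands $[ya,zb]$ by hand and checks each correction term individually. Both are valid; the Three Subgroup Lemma packages the bookkeeping you describe as ``the main obstacle'' into a one-line citation, at the cost of having to explain why $\I_{g,1}(k)$ and $\G_l$ sit in a common ambient group. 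Your route is more elementary and self-contained. One small slip: with the paper's convention $[f,y]=f(y)y^{-1}$ you have $f(y)=a\cdot y$, not $y\cdot a$; this is harmless since $y^{-1}f(y)$ is a conjugate of $[f,y]$ and still lies in $\G_{k+1}$.

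For part (2) your composition-formula argument with the substitution $u=(gf)^{-1}(x)$ is clean and correct; the paper does not spell out a proof of (2) at all.
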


\begin{lem}[Hall, {\cite{Hall}, Three Subgroup Lemma}]
Let $A$, $B$, and $C$ be subgroups of a group $G$. If $N \triangleleft G$ is normal subgroup such that $[A,[B,C]]$ and $[B,[C,A]]$ are contained in $N$ then $[C,[A,B]]$ is also contained in $N$.
\end{lem}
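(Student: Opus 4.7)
The plan is to derive the lemma from the Hall--Witt commutator identity, which asserts that in any group and for any elements $x,y,z$,
\[
[x,[y,z^{-1}]]^{z} \cdot [z,[x,y^{-1}]]^{y} \cdot [y,[z,x^{-1}]]^{x} = 1,
\]
where $g^h$ denotes conjugation of $g$ by $h$; the exact placement of inverses and outer conjugations depends on the convention $[g,h]=ghg^{-1}h^{-1}$ fixed in Section \ref{Section:Series}.

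First I would substitute generators $a\in A$, $b\in B$, $c\in C$ (as $x,y,z$ respectively) into this identity. The first factor is a conjugate of a commutator in $[A,[B,C]]$, and the third is a conjugate of a commutator in $[B,[C,A]]$, after rewriting $[b,c^{-1}]$ and $[c,a^{-1}]$ via the routine identity $[u,v^{-1}]=v^{-1}[v,u]v$. By hypothesis both $[A,[B,C]]$ and $[B,[C,A]]$ are contained in $N$, and the normality of $N$ in $G$ absorbs the outer conjugations, placing both of these factors in $N$. The Hall--Witt identity then forces the middle factor $[c,[a,b^{-1}]]^{b}$ into $N$. Peeling off its outer $b$-conjugation and rewriting $[a,b^{-1}]$ as a conjugate of $[a,b]^{-1}$, using normality of $N$ again, yields $[c,[a,b]] \in N$ for every $a\in A$, $b\in B$, $c\in C$.

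The remaining step is to promote this from the basic generators $[c,[a,b]]$ to all of $[C,[A,B]]$. An arbitrary element of $[A,B]$ is a word $h = \prod_i [a_i,b_i]^{\epsilon_i}$, and the commutator identities
\[
[c, h_1 h_2] = [c,h_1]\cdot h_1[c,h_2]h_1^{-1}, \qquad [c,h^{-1}] = h^{-1}[c,h]^{-1}h,
\]
together with the normality of $N$ in $G$, permit an induction on the word length of $h$ to conclude that $[c,h]\in N$ for every $c\in C$ and $h\in[A,B]$. Since such $[c,h]$ generate $[C,[A,B]]$ as a subgroup, we get $[C,[A,B]] \triangleleft N$, as desired.

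The only genuine obstacle is bookkeeping: picking a version of Hall--Witt matching the commutator convention used in this paper, and then tracking the various inner and outer conjugations carefully enough that normality of $N$ in the ambient group $G$ absorbs each of them. The extension step from basic triple commutators to the full subgroup $[C,[A,B]]$ is purely formal.
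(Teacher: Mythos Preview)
Your argument via the Hall--Witt identity is correct and is the standard proof of the Three Subgroup Lemma. One small simplification: since $c^{-1}\in C$ and $b^{-1}\in B$, the elements $[b,c^{-1}]$, $[c,a^{-1}]$, $[a,b^{-1}]$ already lie in $[B,C]$, $[C,A]$, $[A,B]$ respectively, so the rewriting step $[u,v^{-1}]=v^{-1}[v,u]v$ is not really needed. Also, in your final line you presumably mean $[C,[A,B]]\subset N$ rather than $[C,[A,B]]\triangleleft N$.

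As for comparison with the paper: the paper does not supply its own proof of this lemma at all. It is stated as a quotation of Hall's result with a citation to \cite{Hall}, and then used as a black box inside the proof of Lemma~\ref{lemma:JohnsonRange}. So there is nothing to compare against; your write-up simply fills in a proof the paper omits.
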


The third necessary result is an adaptation of Andreadakis's work in \cite{Andreadakis} to our mod-p case.

\begin{lem}\label{lemma:JohnsonRange}
If $f \in \I_{g,1}^*(k)$ and $x \in \G_l^*$ then $f(x)x^{-1} \in \G_{k+l}^*$. Equivalently $[\I_{g,1}^*(k), \G_l^*] < \G_{k+l}^*$.
\end{lem}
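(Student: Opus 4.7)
The plan is to induct on $l$. The base case $l = 1$ is the definition of $\I_{g,1}^*(k)$. For the inductive step, the commutator identities $[f, yx] = [f, y] \cdot y[f, x] y^{-1}$ and $[f, y^{-1}] = y^{-1}[f, y]^{-1}y$ listed above, combined with the normality of $\G_{k+l}^*$, reduce the problem to verifying $[f, x] \in \G_{k+l}^*$ for $x$ running over a generating set of $\G_l^*$.

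In the Stallings case, $\G_l^S$ is generated by commutators $[a, b]$ with $a \in \G$, $b \in \G_{l-1}^S$ and by $p$-th powers $c^p$ with $c \in \G_{l-1}^S$. For the commutator generators, I would work in the semidirect product $\G \rtimes Mod_{g,1}$ and apply the Three Subgroup Lemma with subgroups $\I_{g,1}^S(k)$, $\langle a \rangle$, and $\langle b \rangle$: the definition of $\I_{g,1}^S(k)$ gives $[\I_{g,1}^S(k), a] \subset \G_{k+1}^S$ and the inductive hypothesis gives $[\I_{g,1}^S(k), b] \subset \G_{k+l-1}^S$, and combining with $[\G_m^S, \G_n^S] \subset \G_{m+n}^S$ places both hypothesis double commutators of the Three Subgroup Lemma in $\G_{k+l}^S$, so the lemma concludes $[f, [a, b]] \in \G_{k+l}^S$. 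For the $p$-th power generators, expand
\[
[f, c^p] = \prod_{m=0}^{p-1} c^m [f, c] c^{-m};
\]
writing $d = [f, c] \in \G_{k+l-1}^S$ (inductive hypothesis), each conjugation correction $[c^m, d]$ lies in $[\G_{l-1}^S, \G_{k+l-1}^S] \subset \G_{k+2l-2}^S \subset \G_{k+l}^S$, so $[f, c^p] \equiv d^p \pmod{\G_{k+l}^S}$. The Stallings $p$-power property $(\G_{k+l-1}^S)^p \subset \G_{k+l}^S$ then completes this case.

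In the Zassenhaus case, generators of $\G_l^Z$ take the form $y^{p^j}$ with $y \in \G_i$ and $ip^j \geq l$. The subcase $j = 0$ reduces to showing $[f, y] \in \G_{k+i}^Z \subset \G_{k+l}^Z$ for $y \in \G_i$; this I would do by a sub-induction on the commutator weight $i$, again using the Three Subgroup Lemma exactly as in the Stallings commutator step. The main obstacle is the subcase $j \geq 1$: the naive conjugation expansion only bounds the error in $\G_{k+2i}^Z$, which is not contained in $\G_{k+l}^Z$ when $l > 2i$ (for instance $l = ip$ with $p \geq 3$). I plan to resolve this via the Hall--Petresco formula
\[
(cy)^{p^j} = c^{p^j} y^{p^j} \prod_{n=2}^{p^j} z_n(c, y)^{\binom{p^j}{n}}, \qquad z_n \in \gamma_n(\langle c, y \rangle),
\]
applied with $c = [f, y] \in \G_{k+i}^Z$ (from the $j = 0$ subcase). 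The leading term $c^{p^j}$ lies in $\G_{p^j(k+i)}^Z \subset \G_{k+ip^j}^Z$ by iterating $(\G_m^Z)^p \subset \G_{pm}^Z$. Each $z_n$ contains at least one $c$-factor, so $z_n \in \G_{k+ni}^Z$, and by Kummer's theorem $v_p(\binom{p^j}{n}) = j - v_p(n)$; since $n \geq p^{v_p(n)}$, raising to the power $\binom{p^j}{n}$ supplies just the boost needed to place each $z_n^{\binom{p^j}{n}}$ inside $\G_{k+ip^j}^Z \subset \G_{k+l}^Z$.
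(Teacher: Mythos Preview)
Your Stallings argument is essentially identical to the paper's: both invoke the Three Subgroup Lemma inside $\G \rtimes \mathrm{Aut}(\G)$ (the paper calls it the holomorph) for the commutator generators, and both handle $p$-th powers via the expansion $[f,c^p]=\prod_m c^m[f,c]c^{-m}$ together with the observation that the conjugation corrections lie one step deeper.

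In the Zassenhaus case your route diverges from the paper's, and in fact your version is the more careful one. The paper simply asserts that ``the same argument we used for the case of Stallings again works here'' to get $[f,y^{p^j}]\equiv [f,y]^{p^j}\pmod{\G_{k+l+1}^Z}$, but this is precisely the step you flag as problematic: the naive conjugation corrections $[[f,y],y^n]$ lie only in $\G_{k+2i}^Z$, which need not be contained in $\G_{k+ip^j}^Z$ once $j\ge 1$. Your Hall--Petresco/Kummer repair is correct and genuinely needed here: with $c=[f,y]\in\G_{k+i}^Z$ from the $j=0$ sub-induction, one has $c^{p^j}\in\G_{(k+i)p^j}^Z\subset\G_{k+ip^j}^Z$, and for each $n\ge 2$ the estimate $(k+ni)\,p^{\,j-v_p(n)}\ge k+ip^j$ (using $n\ge p^{v_p(n)}$) places $z_n^{\binom{p^j}{n}}$ in $\G_{k+ip^j}^Z$ as well. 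The one assertion worth a line of justification is ``each $z_n$ contains at least one $c$-factor,'' i.e.\ $z_n\in\G_{k+ni}^Z$ rather than merely $\G_{ni}^Z$: this follows because substituting $c=1$ into the Hall--Petresco identity forces $z_n(1,y)=1$, so in the free group on $c,y$ the word $z_n$ lies in the normal closure of $c$, and hence every basic commutator appearing in $z_n$ involves $c$ at least once.
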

\begin{proof}
We will induct on $l$. The base case $l = 1$ follows from the definition of $\I_{g,1}^*(k)$. We assume the 
lemma holds for $l$. For the inductive step we prove the lemma for Stallings and Zassenhaus independently. 
Beginning with Stallings, we consider elements of the form $[x,y]\in\G_{l+1}^S$ and $z^p \in \G_{l+1}^S$, 
where $x\in\G$ and $y, z \in\G_l^S$.

We first show that $f([x,y])[x,y]^{-1} \in \G_{k+l+1}^S$ for $f\in\I_{g,1}^S(k)$.
We show this using Andreadakis's argument. First note:

\[
f([x,y])[x,y]^{-1} = [f,[x,y]] \in [\I_{g,1}^S(k),[\G,\G_l^S]]
\]

\noindent In anticipation of using the three subgroup lemma we also note the following two inclusions.

\begin{align*}
[[\I_{g,1}^S(k),\G_l^S],\G] &< [\G_{k+l}^S,\G]\text{, by induction} \\
	&< \G_{k+l+1}^S\text{, by definition of the Stallings series}
\end{align*}
\begin{align*}
[[\I_{g,1}^S(k),\G],\G_l^S] &< [\G_{k+1}^S, \G_l^S]\text{, by induction} \\
	&< \G_{k+l+1}^S\text{, by the properties of the Stallings series} 
\end{align*}

\noindent In order to use the three subgroup lemma, the groups $\I_{g,1}^S(k)$, $\G_l^S$, and $\G$ must all be subgroups of a common group. The holomorph of $\G$ serves this purpose. The holomorph of $\G$ may be defined as $Hol(\G) := \G \rtimes Aut(\G)$ with multiplication given by:
\[
(g_1,f_1)(g_2,f_2) := (g_1f(g_2),f_1f_2)
\]
\noindent Note that any normal subgroup of $\G$ is a normal subgroup of $Hol(\G)$ and that $\I_{g,1}^S(k)$ includes into $Hol(\G)$ by the usual inclusion into $Aut(\G)$. Now the three subgroup lemma implies that $[\I_{g,1}^S(k),[\G,\G_l^S]] < \G_{k+l+1}^S$. 

By a similar consideration we see that $f(z^p)z^{-p} \in \G_{k+l+1}^S$. We show that 
$f(z^p)z^{-p} \equiv (f(z)z^{-1})^p\text{ mod }\G_{k+l+1}^S$:

\begin{align*}
f(z^p)z^{-p} &= [f, z^p] \\
	&= [f,z][f,z]^z...[f,z]^{{z}^{p-1}}\\
\end{align*}

\noindent Note that $[f,z] \in \G_{k+l}^S$ by induction, and $[f,z]^{{z}^i} \in \G_{k+l}^S$ 
for each $i = 1,...,p-1$ by normality. Furthermore, $[[f,z],z^i] \in \G_{k+l+1}^S$ so that
$[f,z] \equiv [f,z]^{{z}^i}\text{ mod }\G_{k+l+1}^S$ and $f(z^p)z^{-p} \equiv (f(z)z^{-1})^p\text{ mod }\G_{k+l+1}^S$.

Now we note that $(f(z)z^{-1})^p \in (\G_{k+l}^S)^p$ by induction, and $(\G_{k+l}^S)^p \in \G_{k+l+1}^S$ by the
properties of the Stallings series. Therefore, $f(z^p)z^{-p} \in \G_{k+l+1}^S$.

The lemma for the case of Stallings then follows by checking the statement for products. Again let $f\in\I_{g,1}^S(k)$, 
and let $w_i \in \G_{l+1}^S$. We have:

\begin{align*}
 f\left(\prod\limits_{i=1}^{n} w_i\right)\left(\prod\limits_{i=1}^{n} w_i\right)^{-1} &= \prod\limits_{i=1}^{n} f(w_i)\prod_{i=1}^{n} w_{n-i}^{-1} \\
 &= f(w_1)w_1^{-1}w_1f(w_2)w_2^{-1}w_2...\\
 &\ \ \ \ \ \ \ \ \ \ \ w_{n-1}^{-1}w_{n-1}f(w_n)w_n^{-1}w_{n-1}^{-1}...w_2^{-1}w_1^{-1}
\end{align*}

\noindent We have seen above that $f(w_i)w_i^{-1} \in \G_{k+l+1}^S$ for each $i$, and by normality we also have 
$w_{n-1}f(w_n)w_n^{-1}w_{n-1}^{-1}\in\G_{k+l+1}^S$. From this we conclude that

\[
f\left(\prod\limits_{i=1}^{n} w_i\right)\left(\prod\limits_{i=1}^{n} w_i\right)^{-1}
\]

\noindent is equivalent modulo $\G_{k+l+1}^S$ to

\[
w_1f(w_2)w_2^{-1}w_2...w_{n-2}^{-1}w_{n-2}f(w_{n-1})w_{n-1}^{-1}w_{n-2}^{-1}...w_2^{-1}w_1^{-1}
\]

\noindent and by iteration this is trivial modulo $\G_{k+l+1}^S$. The statement is then proven for the case of Stallings.

For Zassenhaus we write $t = \prod x_i^{p^j} \in \G_{l+1}^Z$ so that $x_i \in \G_m$ for some $m$ with
$mp^j \geq l + 1$. By iterative use of the identity $[f, yx] = [f, y][f, x]^y$ we see that $f(t)t^{-1}$ is a product of conjugates of elements of the form $f(x_i^{p^j})x_i^{-p^j}$. Since we only wish to show 
$f(t)t^{-1}\in \G_{k+l+1}^Z$, the element of conjugation is unimportant by the normality of $\G_{k+l+1}^Z$.
The same argument we used for the case of Stallings again works here. In particular, we still have that 
$f(x_i^{p^j})x_i^{-p^j} \equiv (f(x_i)x_i^{-1})^{p^j}\text{ mod }\G_{k+l+1}^Z$, and:

\begin{align*}
(f(x_i)x_i^{-1})^{p^j} &\in (\G_{k+m}^Z)^{p^j}\text{, since } [f, x_i] \in [\I_{g,1}(k), \G+m] < \G_{k+m} < \G_{k+m}^Z\\
	&\in \G_{(k+m)p^j}^Z,\text{ by definition of the Zassenhaus series} \\
	&\in \G_{k+l+1}^Z \qedhere
\end{align*} 
\end{proof}

\noindent In the above proof, when we checked the statement of Lemma \ref{lemma:JohnsonRange} on products of elements in $\G_{l+1}^*$ we also proved the following lemma:

\begin{lem}
For fixed $f\in\I_{g,1}^*(k)$ the following map is a homomorphism:
\begin{align*}
\G &\longrightarrow \el_{k+1}^* \\
x &\longmapsto (f(x)x^{-1}\ \text{mod}\ \G_{k+2}^*)
\end{align*}
\end{lem}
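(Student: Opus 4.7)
The plan is to use the commutator identity \([f, yx] = [f, y] \cdot [f, x]^y\) that was recorded earlier in Section \ref{Section:JohnsonFiltAndHomo}, combined with the fact that \(\G_{k+1}^*\) is central in \(\N_{k+1}^*\). Explicitly, for \(x, y \in \G\) and \(f \in \I_{g,1}^*(k)\), I would write
\[
f(yx)(yx)^{-1} \;=\; [f, yx] \;=\; [f, y] \cdot y\,[f, x]\,y^{-1} \;=\; f(y)y^{-1} \cdot y \bigl(f(x)x^{-1}\bigr) y^{-1}.
\]
It therefore suffices to show \(y f(x)x^{-1} y^{-1} \equiv f(x)x^{-1} \pmod{\G_{k+2}^*}\), i.e.\ that conjugation by any element of \(\G\) acts trivially on \(\el_{k+1}^*\).

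For this, first apply Lemma \ref{lemma:JohnsonRange} with \(l = 1\) to conclude that \(f(x)x^{-1} \in \G_{k+1}^*\). Then observe that by the defining central property of each of our two mod-\(p\) series,
\[
[\G,\,\G_{k+1}^*] \;<\; \G_{k+2}^*,
\]
so that \([y, f(x)x^{-1}] \in \G_{k+2}^*\). This is exactly the statement that \(y f(x)x^{-1} y^{-1}\) and \(f(x)x^{-1}\) are congruent modulo \(\G_{k+2}^*\), and combining with the display above gives
\[
f(yx)(yx)^{-1} \;\equiv\; f(y)y^{-1} \cdot f(x)x^{-1} \pmod{\G_{k+2}^*},
\]
which is the desired homomorphism property in \(\el_{k+1}^*\) (written additively, this reads \(\widetilde{\tau}_k^*(f)(yx) = \widetilde{\tau}_k^*(f)(y) + \widetilde{\tau}_k^*(f)(x)\)).

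There is no real obstacle here; the content of the lemma is essentially bookkeeping extracted from the proof of Lemma \ref{lemma:JohnsonRange}, where the analogous manipulation on finite products of elements of \(\G_{l+1}^*\) was already carried out. The key input that needs to be acknowledged is the centrality property \([\G, \G_{k+1}^*] < \G_{k+2}^*\), which holds identically for both the Stallings and Zassenhaus series and was established earlier in Section \ref{Section:Series}. One should note that the argument is uniform in the choice of \(* = S\) or \(* = Z\), so no case-by-case analysis is required.
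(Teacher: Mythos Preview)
Your proof is correct and follows the same approach as the paper: the paper simply points back to the products computation inside the proof of Lemma \ref{lemma:JohnsonRange}, which is exactly the identity $f(yx)(yx)^{-1} = f(y)y^{-1}\cdot y\bigl(f(x)x^{-1}\bigr)y^{-1}$ together with centrality $[\G,\G_{k+1}^*] < \G_{k+2}^*$, just written for finite products rather than two elements. One minor remark: invoking Lemma \ref{lemma:JohnsonRange} with $l=1$ to obtain $f(x)x^{-1}\in\G_{k+1}^*$ is overkill, since that is precisely the definition of $f\in\I_{g,1}^*(k)$.
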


Now we may define our mod-p Johnson homomorphisms
and show they are indeed well-defined homomorphisms. Since the quotients $\el_k^*$ are $\F_p$-vector spaces 
we have that $\widetilde{\tau}_k^*(f)$ factors through $H_1(\Sigma_{g,1}; \F_p)$. This again allows the following 
definitions.

\begin{definition}
The $k^{th}$ Zassenhaus mod-p Johnson homomorphisms is the map:
\[
\tau_k^Z:\I_{g,1}^Z(k) \longrightarrow Hom(H_1(\Sigma_{g,1}; \F_p), \el_{k+1}^Z)
\]
\noindent defined by $\tau_k^Z(f)([x])= f(x)x^{-1}\ \text{mod}\ \G_{k+2}^Z$.

\end{definition}

\begin{definition}
The $k^{th}$ Stallings mod-p Johnson homomorphism is the map:
\[
\tau_k^S:\I_{g,1}^S(k) \longrightarrow Hom(H_1(\Sigma_{g,1}; \F_p), \el_{k+1}^S)
\]
\noindent defined by $\tau_k^S(f)([x])= f(x)x^{-1}\ \text{mod}\ \G_{k+2}^S$.
\end{definition}

\begin{prop}
$\tau_k^*$ is a homomorphism.
\end{prop}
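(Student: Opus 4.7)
The plan is to show that for $f, g \in \I_{g,1}^*(k)$ and any $x \in \G$ we have
\[
(fg)(x)x^{-1} \equiv \bigl(f(x)x^{-1}\bigr)\bigl(g(x)x^{-1}\bigr) \pmod{\G_{k+2}^*},
\]
after which, because $\el_{k+1}^*$ is abelian (indeed an $\F_p$-vector space) and $\widetilde{\tau}_k^*(h)$ is already known to descend to $H_1(\Sigma_{g,1};\F_p)$ by the preceding lemma, the identity $\tau_k^*(fg) = \tau_k^*(f) + \tau_k^*(g)$ follows immediately.

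The first step is the commutator identity $[fg, x] = [f,x]^g[g,x]$ recorded earlier, which gives
\[
(fg)(x)x^{-1} = g\bigl(f(x)x^{-1}\bigr)\cdot g(x)x^{-1}.
\]
The second step is to replace $g(f(x)x^{-1})$ by $f(x)x^{-1}$ modulo $\G_{k+2}^*$. For this I would apply Lemma \ref{lemma:JohnsonRange} with the element $y := f(x)x^{-1}$. By definition of $\I_{g,1}^*(k)$ we have $y \in \G_{k+1}^*$, so the lemma yields
\[
g(y)y^{-1} \in \G_{k+(k+1)}^* = \G_{2k+1}^* \;\subseteq\; \G_{k+2}^*,
\]
the last inclusion holding since $k \geq 1$. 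Hence $g(y) \equiv y \pmod{\G_{k+2}^*}$, and substituting back into the displayed expression gives the congruence sought in the first paragraph.

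The final step is purely formal: projecting to $\el_{k+1}^* = \G_{k+1}^*/\G_{k+2}^*$, which is abelian, the product on the right becomes a sum, and we obtain $\tau_k^*(fg)([x]) = \tau_k^*(f)([x]) + \tau_k^*(g)([x])$ for every $x \in \G$, hence for every $[x] \in H_1(\Sigma_{g,1};\F_p)$. I do not expect a real obstacle here; the only subtlety is making sure Lemma \ref{lemma:JohnsonRange} can be applied to $y = f(x)x^{-1}$, and this is exactly why that lemma was proved in the form $[\I_{g,1}^*(k),\G_l^*] < \G_{k+l}^*$ for all $l$ rather than only $l = 1$.
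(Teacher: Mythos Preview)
Your proof is correct and follows essentially the same approach as the paper: both arguments reduce to applying Lemma~\ref{lemma:JohnsonRange} to the element $f(x)x^{-1}\in\G_{k+1}^*$ (or $g(x)x^{-1}$ in the paper's version) to kill the cross term modulo $\G_{k+2}^*$. The only cosmetic difference is that you invoke the commutator identity $[fg,x]=[f,x]^g[g,x]$ recorded earlier, while the paper performs the equivalent manipulation by hand, inserting $f(x)^{-1}f(x)$ into $f(g(x))x^{-1}$; the roles of $f$ and $g$ end up swapped, which is immaterial since $\el_{k+1}^*$ is abelian.
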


\begin{proof}
Let $f$ and $g$ be elements of $\I_{g,1}^*(k)$. Let $x \in \G$ be a representative of $[x]\in H_1(\Sigma_{g,1};\F_p)$. Then
\begin{align*}
\tau_k^*(f\circ g)([x]) &= f\circ g(x)x^{-1} \text{ mod } \G_{k+2}^* \\
	&= f(g(x))f(x)^{-1}f(x)x^{-1} \text{ mod } \G_{k+2}^* \\
	&= f(g(x)x^{-1})f(x)x^{-1} \text{ mod } \G_{k+2}^*
\end{align*}

\noindent Since $g(x)x^{-1} \in \G_{k+1}^*$ and $f \in \I_{g,1}^*(k)$ Lemma \ref{lemma:JohnsonRange} gives 

\[
f(g(x)x^{-1}) \equiv g(x)x^{-1} \text{ mod } \G_{k+2}^*
\]

\noindent Therefore, $\tau_k^*(f\circ g)([x]) = \tau_k^*(g)([x])\tau_k^*(f)([x])$. \qedhere
\end{proof}

Notice that it follows from the definitions of $\tau_k^*$ that the kernels of these maps coincide with the subsequent terms in the corresponding mod-p Johnson filtrations. That is:

\begin{prop}
$ker(\tau_k^*) = \I_{g,1}^*(k+1)$
\end{prop}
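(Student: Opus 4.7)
The statement is essentially tautological once the definitions are lined up, so the plan is to verify the two inclusions simultaneously by unpacking what it means for $\tau_k^*(f)$ to be the zero homomorphism.

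The plan is as follows. Fix $f \in \I_{g,1}^*(k)$. By definition, $f$ lies in $\ker(\tau_k^*)$ if and only if $\tau_k^*(f)([x]) = f(x)x^{-1} \bmod \G_{k+2}^* = 0$ in $\el_{k+1}^*$ for every $[x] \in H_1(\Sigma_{g,1};\F_p)$. Since the map $x \mapsto f(x)x^{-1} \bmod \G_{k+2}^*$ from $\G$ to $\el_{k+1}^*$ is already a well-defined homomorphism (by the lemma preceding this proposition) and $\el_{k+1}^*$ is an $\F_p$-vector space, it factors uniquely through $H_1(\Sigma_{g,1};\F_p)$; hence the factored map $\tau_k^*(f)$ vanishes precisely when the original map on $\G$ vanishes. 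So $f \in \ker(\tau_k^*)$ if and only if $f(x)x^{-1} \in \G_{k+2}^*$ for every $x \in \G$, which is exactly the defining condition for $f \in \I_{g,1}^*(k+1)$.

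To finish, I would note that the containment $\I_{g,1}^*(k+1) \subseteq \I_{g,1}^*(k)$ needed to make $\ker(\tau_k^*) = \I_{g,1}^*(k+1)$ a meaningful equality holds because $\G_{k+2}^* < \G_{k+1}^*$, which is immediate from the recursive/product descriptions of the Stallings and Zassenhaus series. With this remark in place, the double inclusion reduces verbatim to the equivalence established in the previous paragraph.

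There is no real obstacle here; the only thing to be careful about is to invoke, rather than reprove, the fact that the pre-factored map $x \mapsto f(x)x^{-1} \bmod \G_{k+2}^*$ is a homomorphism on $\G$, since that is precisely what the preceding lemma (itself a consequence of Lemma \ref{lemma:JohnsonRange}) delivers. Once that is cited, the proposition is immediate from the definitions.
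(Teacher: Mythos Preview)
Your proposal is correct and follows essentially the same approach as the paper, which simply notes that $f \in \I_{g,1}^*(k+1)$ if and only if $f(x)x^{-1} \in \G_{k+2}^*$ for all $x \in \G$, i.e., $\tau_k^*(f) = 0$. Your added remarks about the factorization through $H_1(\Sigma_{g,1};\F_p)$ and the inclusion $\I_{g,1}^*(k+1) \subseteq \I_{g,1}^*(k)$ are fine but not strictly necessary, as the paper treats the result as immediate from the definitions.
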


\begin{proof}
This follows immediately from the definitions. Namely, $f \in \I_{g,1}^*(k+1)$ if and only if 
$f(x)x^{-1} \in \G_{k+2}^*$ for every $x \in \G$, which is equivalent to saying
$\tau_k^*(f)(x) = 0 \in \el_{k+1}^*$.\qedhere
\end{proof}

%%%%%%%%%%%%%%%%%%%%%%%%%%%%%%%%%%%%%%%%%%%%%%%%%%%%
%%%%%%%%%%%%%%%%%%%%%%%%%%%%%%%%%%%%%%%%%%%%%%%%%%%%

\section{Fox calculus and Zassenhaus's work}\label{Section:FoxCalc}

Perron gives a definition for a mod-p Johnson filtration $\{\I_{g,1}^P(k)\}$ in the announcement \cite{Perron2}. His definition makes use of Fox's free differential calculus, which we briefly review below. After establishing the necessary machinery due to Fox and Magnus we show that our filtration $\{\I_{g,1}^Z(k)\}$ is in fact equal to Perron's filtration $\{\I_{g,1}^P(k)\}$.

\subsection{Review of Fox calculus.} In a series of papers Fox studies the group of derivations in the
group ring $\Z[\G]$, where $\G$ is a free group of rank $n$. The relevant machinery can be found in 
\cite{Fox}. First we note that we have an evaluation map $\epsilon:\Z[\G]\longrightarrow\Z$ induced by 
the trivial group homomorphism $\G \longrightarrow 1$. Recall a derivation is a map $D:\Z[\G] \longrightarrow \Z[\G]$ that enjoy the following properties:

\begin{itemize}
\item $D(u+v) = D(u) + D(v)$
\item $D(uv) = \epsilon(v)D(u) + uD(v)$
\end{itemize}
\noindent The collection of derivations on $\Z[\G]$ forms a right module over $\Z[\G]$. 

Let $\{x_1,...,x_n\}$ be a generating set for $\G$. Fox showed that the module of derivations is generated as a right $\Z[\G]$-module by the free differentials $\partial/\partial x_j$, which are uniquely determined by the following properties:

\begin{itemize}
\item $\frac{\partial x_i}{\partial x_j} = \delta_{i,j}$
\item $\frac{\partial u + v}{\partial x_j} = \frac{\partial u}{\partial x_j} + \frac{\partial v}{\partial x_j}$
\item $\frac{\partial uv}{\partial x_j} = \epsilon(v)\frac{\partial u}{\partial x_j} + u\frac{\partial v}{\partial x_j}$
\end{itemize}

We may use these differentials to define a series for $\G$ that is known to coincide with the lower central series.

\begin{definition}
The series $\{\G_k'\}$ of $\G$ is defined by the property that $w\in\G_k'$ if and only if for all $l < k$
and all sequences $i_1,...,i_l$ with $1\leq i_j \leq n$ for $1\leq j \leq l$ we have:
\begin{align*}
\epsilon\left(\frac{\partial^l w}{\partial x_{i_l} ... \partial x_{i_1}}\right) = 0
\end{align*}
\end{definition}

\begin{prop}[Fox, {\cite{Fox},4.5}]
The lower central series $\{\G_k\}$ is equal to the series $\{\G_k'\}$ .
\end{prop}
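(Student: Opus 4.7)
The plan is to prove both inclusions simultaneously via the Magnus embedding of $\Z[\G]$ into the ring of noncommuting formal power series. Let $R = \Z\langle\langle X_1,\ldots,X_n\rangle\rangle$ and define an algebra homomorphism $M\colon \Z[\G] \to R$ by $M(x_i) = 1 + X_i$ (so that $M(x_i^{-1}) = 1 - X_i + X_i^2 - \cdots$). A classical theorem of Magnus states that for a free group $\G$, one has $w \in \G_k$ if and only if $M(w) - 1$ contains no monomials of degree less than $k$; this is the one fact I would take as input from the literature.

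The bridge to Fox calculus is the coefficient identity
\[
M(w) = 1 + \sum_{l \geq 1}\sum_{i_1,\ldots,i_l} \epsilon\!\left(\frac{\partial^l w}{\partial x_{i_l}\cdots\partial x_{i_1}}\right) X_{i_1}\cdots X_{i_l},
\]
which I would establish first. Both the coefficient of $X_{i_1}\cdots X_{i_l}$ in $M(w)$ and the evaluation $\epsilon(\partial^l w/\partial x_{i_l}\cdots\partial x_{i_1})$ are $\Z$-linear functionals on $\Z[\G]$; using multiplication in $R$ together with $M(v)|_{X_j=0} = \epsilon(v)$, the left side satisfies the same Leibniz-type recursion as the Fox derivative side after applying $\epsilon$. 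Since both sides agree on the generators $x_i$ (from $M(x_i) = 1 + X_i$ and $\partial x_j / \partial x_i = \delta_{ij}$), induction on reduced word length in $\G$ yields the identity.

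With the identity in hand, the proposition is immediate: $w \in \G_k'$ iff $\epsilon(\partial^l w / \partial x_{i_l}\cdots\partial x_{i_1}) = 0$ for every $l < k$ and every index sequence, iff every monomial of degree less than $k$ in $M(w)$ has zero coefficient, iff $w \in \G_k$ by Magnus's theorem. The main technical obstacle is making the coefficient identity precise and verifying that the iterated product rule on both sides matches cleanly after passing through $\epsilon$; the rest is bookkeeping. A self-contained route avoiding Magnus is possible via direct induction on $k$ for the inclusion $\G_k \subseteq \G_k'$ from the Leibniz rule applied to commutators, but the reverse inclusion seems to genuinely require embedding into a graded ring like $R$, so the Magnus approach is the natural one.
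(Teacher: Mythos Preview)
Your proposal is correct. The paper does not supply its own proof of this proposition; it attributes the result to Fox and then, in the immediately following subsection on Magnus representations, sketches precisely the route you describe: introduce the Magnus map $Mag\colon \Z[\G]\to \Z\langle\langle w_1,\ldots,w_{2g}\rangle\rangle$, invoke Magnus's theorem that $\G_k$ coincides with the filtration by order of vanishing of $Mag(w)-1$, and observe that the coefficients of $Mag(w)$ are exactly the evaluated iterated Fox derivatives, so that $\{\G_k''\}=\{\G_k'\}=\{\G_k\}$. Your write-up simply fills in the details of that sketch, in particular the coefficient identity and the inductive verification via the Leibniz rule, so the approach is the same as the paper's.
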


\subsection{Magnus representations.} Magnus gives a filtration for the free group $\G$ in \cite{Magnus}. In \cite{Fox} Fox shows that Magnus's series is also equal to the lower central series of $\G$. Magnus's series is given in 
terms of the representation taking his name. The Magnus representation is a map 
$Mag:\Z[\G] \longrightarrow \Z\langle\langle w_1,...,w_{2g}\rangle\rangle$ from the group ring into the ring of formal power series in the 
noncommuting variables $\{w_i\}$ with coefficients in $\Z$. It is defined by mapping generators $x_i$ of $\G$ to the power series
$1 + w_i$. Work of Magnus shows that $Mag$ is an injection. In this way a group element $w \in \G$ corresponds to a
power series $Mag(w) \in \Z\langle\langle w_1,...,w_{2g}\rangle\rangle$, which has a well-defined notion of degree. 
A filtration $\{{\G_k}''\}$ of $\G$ is then given by the requirement that $w \in \G_k$ when $Mag(w) - 1$
is a sum of monomials of degree at least $k$. The coefficients of $Mag(w)$ can be given by evaluations of Fox's 
derivatives making it easy to see the equality of $\{{\G_k}''\}$ to $\{\G_k'\}$ and therefore $\{\G_k\}$.

Zassenhaus's series originally arises from similar considerations of a Magnus style representation.
The representation $Mag_p:\F_p[\G] \longrightarrow \F_p\langle\langle w_1,...,w_{2g}\rangle\rangle$ is given by sending 
$x_i \mapsto 1 + w_i$, extending, and reducing coefficients modulo $p$. The Zassenhaus series is in this way 
analogously characterized. In particular, we have the following result of 
Zassenhaus.

\begin{prop}[Zassenhaus, {\cite{Zassenhaus}}]
For $w\in\G$ we have $w\in\G_k^Z$ if and only if for all $l < k$ and all sequences $i_1,...,i_l$ with $1\leq i_j \leq n$ for $1\leq j \leq l$ we have:
\begin{align*}
\epsilon\left(\frac{\partial^l w}{\partial x_{i_l} ... \partial x_{i_1}}\right) \equiv 0\ \text{mod}\ p
\end{align*}
\end{prop}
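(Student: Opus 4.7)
The plan is to use the Magnus representation $Mag_p$ from the previous subsection as the bridge between Fox derivatives and the Zassenhaus series. First I would prove a Taylor-style coefficient formula: for any $w \in \G$ and any multi-index $(i_1, \ldots, i_l)$, the coefficient of the monomial $w_{i_1} w_{i_2} \cdots w_{i_l}$ in $Mag_p(w)$ equals $\epsilon(\partial^l w / \partial x_{i_l} \cdots \partial x_{i_1})$ reduced modulo $p$. This follows by induction on word length from the multiplicativity $Mag_p(uv) = Mag_p(u) Mag_p(v)$ together with the Leibniz rule for Fox derivatives, the generator case $x_i \mapsto 1 + w_i$ being immediate. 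Consequently the proposition reduces to the claim that $w \in \G_k^Z$ if and only if $Mag_p(w) - 1$ lies in the $k$-th power $I^k$ of the augmentation ideal of $\F_p\langle\langle w_1, \ldots, w_{2g}\rangle\rangle$.

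Set $D_k = \{w \in \G : Mag_p(w) - 1 \in I^k\}$. The easy inclusion $\G_k^Z \subseteq D_k$ follows by checking that the family $\{D_k\}$ enjoys both closure properties characterizing the Zassenhaus series: $[D_k, D_l] \subseteq D_{k+l}$ and $(D_k)^p \subseteq D_{pk}$. The commutator inclusion comes from expanding $Mag_p([u,v])$ using the geometric series for inverses in the power series ring and grouping by degree. The $p$-th power inclusion is where the mod-$p$ hypothesis is essential: the Frobenius identity $(1 + u')^p \equiv 1 + u'^p \bmod p$ in the characteristic-$p$ power series ring yields $Mag_p(w^p) - 1 \equiv (Mag_p(w) - 1)^p \bmod p$, which lies in $I^{pk}$ whenever $w \in D_k$. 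Because $\G_k^Z$ is the fastest descending series with these two properties, the inclusion follows.

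The reverse inclusion $D_k \subseteq \G_k^Z$ is the hard direction. The cleanest route is to pass to associated graded objects: the associated graded of $\F_p\langle\langle w_1, \ldots, w_{2g}\rangle\rangle$ under the $I$-adic filtration is the free associative $\F_p$-algebra on the $w_i$, and by a theorem of Zassenhaus the associated graded $\bigoplus_k \G_k^Z / \G_{k+1}^Z$ embeds in this free associative algebra as the free restricted Lie algebra on the classes of the generators $x_i$. A Poincar\'e--Birkhoff--Witt style dimension count for the restricted universal enveloping algebra then forces every class in $D_k / D_{k+1}$ to come from $\G_k^Z / \G_{k+1}^Z$, and a downward induction on the filtration finishes the argument. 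The main obstacle is precisely this structural identification, which is essentially the content of Zassenhaus's original theorem for free groups; in a self-contained treatment I would invoke that paper rather than reconstruct the PBW theory for restricted Lie algebras from scratch.
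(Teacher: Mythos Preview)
The paper does not give its own proof of this proposition; it is simply attributed to Zassenhaus with a citation to \cite{Zassenhaus}, in the same way that the corresponding characterization of the lower central series via Fox calculus is attributed to Fox. The surrounding discussion in Section~\ref{Section:FoxCalc} only records that the coefficients of $Mag_p(w)$ are the mod-$p$ reductions of the evaluated Fox derivatives, which is precisely the Taylor-style identification you set up in your first step. So there is nothing to compare your argument against: the paper treats the result as input from the literature.

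Your outline is nonetheless a correct sketch of the standard proof. The easy inclusion $\G_k^Z \subseteq D_k$ via the two defining properties of the Zassenhaus series is fine; note that the binomial expansion $(1+u')^p = 1 + u'^p$ is legitimate in the noncommutative power series ring because $1$ is central, so the usual binomial coefficients appear and vanish mod $p$. For the reverse inclusion you correctly identify the crux as the structure theorem that $\bigoplus_k \G_k^Z/\G_{k+1}^Z$ is the free restricted Lie algebra over $\F_p$ on the generators, whose restricted enveloping algebra is the free associative $\F_p$-algebra. This is exactly the content of Zassenhaus's original paper, and you are right that in a self-contained write-up one would simply cite it rather than redo the restricted PBW theory. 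In short, your proposal supplies more than the paper does, and what you supply is sound.
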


\subsection{Johnson's work via Fox calculus.} Using this Fox calculus definition of the lower central series 
one may give another definition of Johnson's filtration. Let $\{x_1,...,x_{2g}\}$ be a basis for $\G$ so that 
$\partial/\partial x_i$ are the usual Fox derivatives. For a given mapping class $f\in Mod_{g,1}$ we may define its Fox matrix:
\[
B(f) = (f_{ij}) = \left(\overline{\frac{\partial f(x_j)}{\partial x_i}}\right) \in GL_{2g}(\Z[\G])
\]
\noindent Here $\sum\overline{ n_ig_i}$ denotes $\sum n_ig_i^{-1} \in \Z[\G]$ and is necessary so that:

\[
B(f\circ g) = B(f)B(g)^f = (f_{ij})(f(g_{ij})). 
\] 

\noindent The following lemma is the first step in obtaining a type of Taylor expansion for $B(f)$.

\begin{lem}[Fundamental Theorem of Fox Calculus; Fox, \cite{Fox}] Let $w \in \Z[\G]$ then:
\[
w = \epsilon(w) + \sum\limits_{i = 1}^{2g}\frac{\partial w}{\partial x_i}(x_i - 1)
\]
\end{lem}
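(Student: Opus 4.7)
The plan is to reduce to the case of group elements and then proceed by induction on word length. Both sides of the identity are $\Z$-linear in $w$: the left side trivially, and the right side because $\epsilon$ is a ring homomorphism and each Fox derivative $\partial/\partial x_i$ is additive. So it suffices to verify the identity for $w \in \G$, where $\epsilon(w) = 1$ and the claim becomes
\[
w - 1 = \sum_{i=1}^{2g}\frac{\partial w}{\partial x_i}(x_i - 1).
\]

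For the base cases of the induction I would handle $w = 1$, $w = x_j$, and $w = x_j^{-1}$. The cases $w = 1$ and $w = x_j$ are immediate from the defining properties $\partial x_i/\partial x_j = \delta_{ij}$. The case $w = x_j^{-1}$ first requires computing $\partial x_j^{-1}/\partial x_i$; applying the Leibniz rule to the identity $x_j \cdot x_j^{-1} = 1$ and solving yields $\partial x_j^{-1}/\partial x_i = -x_j^{-1}\delta_{ij}$, after which the right-hand side collapses to $x_j^{-1} - 1$ as desired.

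For the inductive step, I would write $w = uv$ as a product of shorter words and use the Leibniz rule for Fox derivatives together with the inductive hypothesis for $u$ and $v$:
\begin{align*}
\sum_{i=1}^{2g}\frac{\partial(uv)}{\partial x_i}(x_i-1)
&= \epsilon(v)\sum_{i=1}^{2g}\frac{\partial u}{\partial x_i}(x_i-1) + u\sum_{i=1}^{2g}\frac{\partial v}{\partial x_i}(x_i-1)\\
&= \epsilon(v)\bigl(u - \epsilon(u)\bigr) + u\bigl(v - \epsilon(v)\bigr)\\
&= uv - \epsilon(u)\epsilon(v)\\
&= uv - \epsilon(uv),
\end{align*}
using that $\epsilon$ is a ring homomorphism in the last step. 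This is exactly the identity for $w = uv$.

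I do not expect a serious obstacle: the statement is a purely formal consequence of the axioms for $\partial/\partial x_i$ and the multiplicativity of $\epsilon$. The only mild subtlety is the $w = x_j^{-1}$ base case, which requires first deriving the formula for the Fox derivative of an inverse generator before the telescoping on the right-hand side can take place.
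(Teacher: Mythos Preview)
Your proof is correct and is the standard argument. The paper does not actually prove this lemma; it is stated with attribution to Fox \cite{Fox} and used as a black box to set up the Taylor expansion of the Fox matrix, so there is no proof in the paper to compare against. Your reduction to group elements by $\Z$-linearity, verification on generators and their inverses, and telescoping inductive step via the Leibniz rule is exactly how one establishes this identity from the axioms for the Fox derivatives.
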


The Fundamental Theorem of Fox Calculus allows us to write a Taylor expansion for $w \in \Z[\G]$. It is given by the representation into $\Z\langle\langle w_1,...,w_{2g}\rangle\rangle$:

\[
 w \mapsto \sum\limits_{l = 1}^{\infty}\sum\limits_{1\leq j_1,...,j_l\leq 2g}\epsilon\left(\frac{\partial^l}{\partial x_{j_l}...\partial x_{j_1}}\left(\frac{\partial w}{\partial x_i}\right)\right)w_{j_l}...w_{j_1}
\]

\noindent The Fox matrix then has a Taylor expansion $B(f) \mapsto B_0(f) + B_1(f) + ...$ given by expanding the entries of $B(f)$. Namely,

\[
B_{l-1}(f) = (b_{ik}) = \left(  \sum\limits_{1\leq j_1,...,j_l\leq 2g}\epsilon\left(\frac{\partial^l}{\partial x_{j_l}...\partial x_{j_1}}\left(\frac{\overline{\partial f(x_k)}}{\partial x_i}\right)\right)w_{j_l}...w_{j_1}  \right)
\]

\begin{definition}
The Johnson filtration is defined by:
\[
\I_{g, 1}(k) = \{f \in Mod_{g, 1} | B_0(f) = Id, B_l(f) = 0,\text{ for all }l = 1, 2,...,k-1\}.
\]
\end{definition}

\noindent This definition is known to be equivalent to the usual definition of the Johnson filtration given earlier. See, for instance, \cite{Perron1}.

Using this definition of the Johnson filtration, Perron defines a mod-p Johnson filtration in the following way. Let $\omega:\Z[G]\longrightarrow\F_p[G]$ be reduction of coefficients modulo $p$. Let $\Omega:GL_{2g}(\Z[\G])\longrightarrow GL_{2g}(\F_p[\G])$ be the map $(b_{ij})\longmapsto (\omega(b_{ij}))$. Define $B^{(p)}(f) =  \Omega(B(f))\in GL_{2g}(\F_p[\G])$. Again we have a Taylor type expansion $B^{(p)}(f) \mapsto B^{(p)}_0(f) + B^{(p)}_1(f) + ...$; the entries of $B^{(p)}_i(f)$ are equal to the entries of $B_i(f)$ with coefficients modulo $p$.

\begin{definition}[Perron, {\cite{Perron2}}]
The Perron mod-p Johnson filtration is $\I_{g,1}^P(k) = \{f \in Mod_{g,1} | B_0^{(p)}(f) = Id, B_l^{(p)}(f) = 0, l = 1, 2,..., k-1\}$.
\end{definition}

\begin{thm}
$\I_{g,1}^P(k) = \I_{g,1}^Z(k)$ for all $k$.
\end{thm}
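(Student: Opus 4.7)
The plan is to characterize both filtrations as systems of vanishing conditions on Magnus-expansion coefficients of the Fox derivatives $\partial f(x_j)/\partial x_i$, and then check the two resulting systems coincide. First I would reduce the defining condition for $\I_{g,1}^Z(k)$ to a condition on generators: using the commutator identity $[f, yx] = [f,y][f,x]^y$ recalled in Section \ref{Section:JohnsonFiltAndHomo} together with normality of $\G_{k+1}^Z$, the condition $f(x)x^{-1} \in \G_{k+1}^Z$ for all $x \in \G$ is equivalent to $f(x_j)x_j^{-1} \in \G_{k+1}^Z$ for each generator $x_j$. By Zassenhaus's Fox-calculus characterization recalled above, this in turn is equivalent to $Mag_p(f(x_j)x_j^{-1}) - 1$ having no terms of degree $\le k$.

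Next, applying the Fundamental Theorem of Fox Calculus to $f(x_j) - x_j$ gives
\[
f(x_j)x_j^{-1} - 1 = \left(\sum_{i=1}^{2g}\left(\frac{\partial f(x_j)}{\partial x_i} - \delta_{ij}\right)(x_i - 1)\right)x_j^{-1}.
\]
Since $Mag_p(x_j^{-1})$ is a unit with constant term $1$, since $Mag_p(x_i - 1) = w_i$ has degree $1$, and since distinct right-most letters $w_i$ cannot cancel in $\F_p\langle\langle w_1,\dots,w_{2g}\rangle\rangle$, the vanishing of $Mag_p(f(x_j)x_j^{-1}) - 1$ in degrees $\le k$ is equivalent to $Mag_p(\partial f(x_j)/\partial x_i)$ having degree-$l$ part zero for $l = 1,\dots,k-1$ and constant term $\delta_{ij} \pmod p$, for all $i,j$.

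Finally, one must reconcile this with the definition of $\I_{g,1}^P(k)$, which uses the \emph{barred} derivatives $\overline{\partial f(x_j)/\partial x_i}$. Let $\iota$ be the continuous ring involution of $\F_p\langle\langle w_1,\dots,w_{2g}\rangle\rangle$ determined by $\iota(1+w_i) = (1+w_i)^{-1}$; a direct check shows $\iota(Mag_p(w)) = Mag_p(\overline{w})$ for $w \in \F_p[\G]$. Since $\iota(w_i) = -w_i + w_i^2 - \cdots$, the image under $\iota$ of any degree-$l$ homogeneous element has degree-$l$ component equal to $(-1)^l$ times itself together with only higher-degree corrections. A short induction on $l$ therefore shows that the degree-$l$ part of $Mag_p(\partial f(x_j)/\partial x_i)$ vanishes for $l = 1,\dots,k-1$ (with constant term $\delta_{ij}$) if and only if $B_0^{(p)}(f) = Id$ and $B_l^{(p)}(f) = 0$ for $l = 1,\dots,k-1$. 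Chaining the equivalences yields $\I_{g,1}^Z(k) = \I_{g,1}^P(k)$. The main technical obstacle is this last step: carefully tracking how the bar operation interacts with the Magnus expansion so that the low-degree vanishing conditions transfer cleanly through the involution $\iota$.
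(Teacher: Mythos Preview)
Your argument is correct and takes a genuinely different route from the paper's. The paper proves both inclusions by an inductive computation with iterated Fox derivatives: it expands $\epsilon\bigl(\partial^l (f(x)x^{-1})/\partial x_{j_l}\cdots\partial x_{j_1}\bigr)$ via the higher-order product rule (Fox's formula (3.2)) and shows, by induction on $l$, that these evaluations agree modulo $p$ with the corresponding Taylor coefficients entering the $B_l^{(p)}(f)$. Your approach bypasses the iterated product-rule bookkeeping entirely: you apply the Fundamental Theorem of Fox Calculus once to express $f(x_j)x_j^{-1}-1$ as $\sum_i(\partial f(x_j)/\partial x_i-\delta_{ij})(x_i-1)\cdot x_j^{-1}$, then read off the vanishing conditions directly in the Magnus ring using that $(1+w_j)^{-1}$ is a filtration-preserving unit and that distinct rightmost letters are linearly independent. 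The bar is then handled uniformly by the filtration-preserving anti-automorphism $\iota$.

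Two small points of precision. First, the map $\iota$ you want is an \emph{anti}-involution (i.e.\ $\iota(ab)=\iota(b)\iota(a)$), since the bar on $\F_p[\G]$ is an anti-involution; with that correction the identity $\iota\circ Mag_p = Mag_p\circ\overline{(\,\cdot\,)}$ does hold. Second, on a degree-$l$ monomial $w_{i_1}\cdots w_{i_l}$ the leading part of $\iota$ is $(-1)^l$ times the \emph{reversed} monomial $w_{i_l}\cdots w_{i_1}$, not the monomial itself; this does not affect your conclusion, since reversal is a degree-preserving bijection and hence the degree-$l$ piece of $\iota(u)$ vanishes iff that of $u$ does. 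What your approach buys is a cleaner, more conceptual treatment that isolates the role of the bar and avoids the somewhat delicate induction on the order of the Fox derivative; what the paper's approach buys is a self-contained computation entirely within Fox's formalism, without needing to introduce $\iota$.
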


\begin{proof}
Let $f\in \I_{g,1}^P(k)$ so that $B_0^{(p)}(f) = Id, B_l^{(p)}(f) = 0; l = 1, 2,..., k-1$. We now show that $f\in\I_{g,1}^Z(k)$.
Writing the Taylor expansion for $B(f)$, we know that:
\[
\Omega\left( \sum\limits_{1\leq j_1,...,j_l\leq 2g}\epsilon\left(\frac{\partial^l}{\partial x_{j_l}...\partial x_{j_1}}\left(\frac{\overline{\partial f(x_k)}}{\partial x_i}\right)\right)(x_{j_l}-1)...(x_{j_1} - 1) \right)= 0
\]
\noindent where $w_n$ are noncommuting variables. In particular, the coefficients are zero modulo $p$ for 
$l = 1,...,k-1$. We wish to show that $f(x)x^{-1}$ has all of its Fox derivatives of order at most $k-1$ evaluate 
to zero modulo $p$ so that $f(x)x^{-1} \in \G_k^Z$. It suffices to check this on basis elements 
$x\in\{x_1,...,x_{2g}\}$. We will need the following identity, which follows from the product rule of Fox calculus:

\[
\frac{\partial u^{-1}}{\partial x} = -u^{-1} \frac{\partial u}{\partial x}
\]

\noindent We will also need the higher-order product rule (3.2) from \cite{Fox}:

\[
\frac{\partial^l uv}{\partial x_{j_l} ...\partial x_{j_1}}= \sum_{n=1}^l\left(\frac{\partial^{l-n} u}{\partial x_{j_l}...\partial x_{j_n}}\epsilon\left(\frac{\partial^{n-1} v}{\partial x_{j_{n-1}}...\partial x_{j_1}}\right)\right) + u \frac{\partial^l v}{\partial x_{j_l}...\partial x_{j_1}}
\]

\noindent We now induct on the order of the Fox derivatives, first noting that:

\begin{align*}
\frac{\partial f(x)x^{-1}}{\partial x_i} &= \epsilon(x^{-1})\frac{\partial f(x)}{\partial x_i} + f(x)\frac{\partial x^{-1}}{\partial x_i} \\
 &= \frac{\partial f(x)}{\partial x_i} - f(x)x^{-1}\frac{\partial x}{\partial x_i} \\
\end{align*} 

\noindent This evaluates to:

\[
\begin{cases}
		\epsilon(\frac{\partial f(x)}{\partial x_i}) - 1, & \text{if } x = x_i\\
		\epsilon(\frac{\partial f(x)}{\partial x_i}), & \text{if } x \neq x_i
\end{cases}
\]

\noindent Since $B_0^{(p)}(f) = Id$ we know that the above is zero in both cases. In particular, we satisfy the base case for our induction.

We now assume the inductive hypothesis and consider the following equalities:

\begin{align*}
\frac{\partial^l f(x)x^{-1}}{\partial x_{j_l}...\partial x_{j_1}} &= \frac{\partial^{l-1}}{\partial x_{j_l}...\partial x_{j_2}}\left(\frac{\partial f(x)x^{-1}}{\partial x_{j_1}}\right) \\
	&= \frac{\partial^{l-1}}{\partial x_{j_l}...\partial x_{j_2}}\left(\epsilon(x^{-1})\frac{\partial f(x)}{\partial x_{j_1}} + f(x)\frac{\partial x^{-1}}{\partial x_{j_1}}\right) \\
\end{align*}

\noindent Applying the evaluation map and using the higher order product rule (3.2) in \cite{Fox} we obtain:

\begin{align*}
\epsilon\left(\frac{\partial^l f(x)x^{-1}}{\partial x_{j_l}...\partial x_{j_1}}\right) &= \epsilon\left(\frac{\partial^{l-1}}{\partial x_{j_l}...\partial x_{j_2}}\left(\frac{\partial f(x)}{\partial x_{j_1}}\right)\right) - \epsilon\left(\frac{\partial^{l-1}}{\partial x_{j_l}...\partial x_{j_2}}\left(f(x)x^{-1}\frac{\partial x}{\partial x_{j_1}}\right)\right) \\
	&= -\epsilon\left(\frac{\partial^{l-1}}{\partial x_{j_l}...\partial x_{j_2}}\left(f(x)x^{-1}\frac{\partial x}{\partial x_{j_1}}\right)\right) \text{, since } f\in\I_{g,1}^P(k)\\
	&= -\epsilon\left(\sum_{n=2}^l\left(\frac{\partial^{l-n} f(x)x^{-1}}{\partial x_{j_l}...\partial x_{j_n}}\epsilon\left(\frac{\partial^{n-1}x}{\partial x_{j_{n-1}}...\partial x_{j_1}}\right)\right)\right) \\
	&\ \ \ \ \ \ - \epsilon\left(f(x)x^{-1}\frac{\partial^l x}{\partial x_{j_l}...\partial x_{j_1}}\right) \text{, by (3.2) in \cite{Fox}} \\
	&= -\epsilon\left(\frac{\partial^{l-2} f(x)x^{-1}}{\partial x_{j_l}...\partial x_{j_2}}\epsilon\left( \frac{\partial x}{\partial x_{j_1}} \right)\right) 
\end{align*}

\noindent The last equality holds since any higher order derivative of the generator $x$ is trivial. The last term is zero modulo $p$ by the inductive hypothesis. We therefore see that $\I_{g,1}^P(k) < \I_{g,1}^Z(k)$.

We now seek to show the other containment. Let $f\in\I_{g,1}^Z(k)$ so that the order $l \leq k-2$ partial derivatives 
of $f(x)x^{-1}$ evaluate to zero modulo $p$. That is to say the following is zero modulo $p$:

\[
\epsilon\left(\frac{\partial^{l}f(x)x^{-1}}{\partial x_{j_l}...\partial x_{j_1}}\right)
\]

\noindent We saw above that this is equal to:

\begin{align*}
\epsilon\left(\frac{\partial^{l-1}}{\partial x_{j_l}...\partial x_{j_2}}\left(\frac{\partial f(x)}{\partial x_{j_1}}\right)\right)
	&- \epsilon\left(\sum_{n=2}^{l}\left(\frac{\partial^{l-n} f(x)x^{-1}}{\partial x_{j_{l}}...\partial x_{j_n}}\epsilon\left(\frac{\partial^{n-1}x}{\partial x_{j_{n-1}}...\partial x_{j_1}}\right)\right)\right) \\
	&- \epsilon\left(f(x)x^{-1}\frac{\partial^{l} x}{\partial x_{j_{l}}...\partial x_{j_1}}\right)  
\end{align*}

\noindent Considering only the case when $x$ is a generator, for $l \geq 2$ this again simplifies to:

\[
\epsilon\left(\frac{\partial^{l-1}}{\partial x_{j_l}...\partial x_{j_2}}\left(\frac{\partial f(x)}{\partial x_{j_1}}\right)\right) - \epsilon\left(\frac{\partial^{l-2} f(x)x^{-1}}{\partial x_{j_l}...\partial x_{j_2}}\epsilon\left( \frac{\partial x}{\partial x_{j_1}} \right)\right)
\]

\noindent Our hypothesis on the evaluation modulo $p$ of derivatives of $f(x)x^{-1}$ lets us conclude that
 
\[
\epsilon\left(\frac{\partial^{l-1}}{\partial x_{j_l}...\partial x_{j_2}}\left(\frac{\partial f(x)}{\partial x_{j_1}}\right)\right)
\]

\noindent is zero modulo $p$. In particular, $B_l^{(p)} = 0$ for $l = 1, ... ,k-1$, and we have $\I_{g,1}^Z(k) < \I_{g,1}^P(k)$ as desired. When $l = 1$ we have have

\[
\epsilon\left(\frac{\partial^{l}f(x)x^{-1}}{\partial x_{j_l}...\partial x_{j_1}}\right) = \epsilon\left(\frac{\partial f(x)x^{-1}}{\partial x_{j_1}}\right)
\]

\noindent which we have seen before to be equal to:

\[
\begin{cases}
		\epsilon(\frac{\partial f(x)}{\partial x_{j_1}}) - 1, & \text{if } x = x_{j_1}\\
		\epsilon(\frac{\partial f(x)}{\partial x_{j_1}}), & \text{if } x \neq x_{j_1}
\end{cases}
\]

\noindent As we are under the assumption that this is equal to zero modulo $p$, we see that $B_0^{(p)} = Id$, and we have proven the theorem.

\qedhere
\end{proof}

%%%%%%%%%%%%%%%%%%%%%%%%%%%%%%%%%%%%%%%%%%%%%%%%%%%%
%%%%%%%%%%%%%%%%%%%%%%%%%%%%%%%%%%%%%%%%%%%%%%%%%%%%

\section{Calculating the images}\label{Section:Images}

Toward the goal of computing the images of the first Stallings and Zassenhaus mod-p Johnson homomorphisms we 
first describe a generating set for $Mod_{g,1}[p] = \I_{g,1}^S(1) = \I_{g,1}^Z(1)$. Let us introduce the notation 
$D_p$ for the collection of $p^{th}$-powers of Dehn twists. The following lemma was 
announced in \cite{Perron2} by Perron.

\begin{lem}\label{ModPGenerators}
$Mod_{g,1}[p]$ is generated by $\I_{g,1}$ and $D_p$.
\end{lem}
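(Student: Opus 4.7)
The plan is to factor the problem through the symplectic representation. Recall the short exact sequence
\[
1 \longrightarrow \I_{g,1} \longrightarrow Mod_{g,1} \stackrel{\psi}{\longrightarrow} Sp_{2g}(\Z) \longrightarrow 1,
\]
and note that $Mod_{g,1}[p] = \psi^{-1}(\G_{2g}[p])$, where $\G_{2g}[p]$ is the principal level-$p$ congruence subgroup of $Sp_{2g}(\Z)$, i.e.\ the kernel of the reduction $Sp_{2g}(\Z)\to Sp_{2g}(\F_p)$. A Dehn twist $T_c$ maps under $\psi$ to the symplectic transvection $v\mapsto v+\langle v,[c]\rangle[c]$ on $H=H_1(\Sigma_{g,1};\Z)$, so the $p^{th}$ power $T_c^p$ lands in $\G_{2g}[p]$.

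Given this setup, the lemma follows immediately once we know that $\G_{2g}[p]$ is generated by $p^{th}$ powers of symplectic transvections on primitive vectors of $H$. Indeed, if $f\in Mod_{g,1}[p]$ then $\psi(f)\in\G_{2g}[p]$ can be written as a product $\prod_i\tau_{v_i}^{\pm p}$ of such transvections. Since $\psi$ carries Dehn twists surjectively onto transvections, we may lift each $\tau_{v_i}$ to a Dehn twist $T_{c_i}$ about a simple closed curve with $[c_i]=v_i$. Then
\[
f\cdot\prod_i T_{c_i}^{\mp p}\in\ker(\psi)=\I_{g,1},
\]
exhibiting $f$ as an element of the subgroup generated by $\I_{g,1}$ and $D_p$.

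The main obstacle is the algebraic claim that $\G_{2g}[p]$ is generated by $p^{th}$ powers of transvections on primitive vectors. I would either cite this as a classical fact about principal congruence subgroups of $Sp_{2g}(\Z)$ (it parallels Mennicke's description of the congruence subgroup of $SL_n(\Z)$), or prove it directly by adapting the argument that transvections generate $Sp_{2g}(\Z)$: starting from $A\in\G_{2g}[p]$, use the symplectic row/column reduction to clear the first row and column using $p^{th}$-power transvections (noting that all off-diagonal entries of $A$ are divisible by $p$ and the diagonal entries are $\equiv 1\pmod p$, so the reduction steps can be done using $\tau_v^p$ rather than $\tau_v$), and induct on the genus. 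Once this generation statement is in hand, the rest of the proof is formal, as sketched above.
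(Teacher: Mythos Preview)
Your approach is essentially the paper's: reduce to the short exact sequence with kernel $\I_{g,1}$ and quotient $Sp_{2g}(\Z)[p]$, then lift a generating set of the latter to $p^{th}$-powers of Dehn twists. The paper resolves your ``main obstacle'' by citing Bass--Milnor--Serre, which gives \emph{normal} generation of $Sp_{2g}(\Z)[p]$ by explicit elementary matrices $M_{i,j},N_{i,j}$; these lift to products of three elements of $D_p$, and the conjugation identity $fT_af^{-1}=T_{f(a)}$ upgrades normal generation to honest generation by $D_p$ inside $Mod_{g,1}$. That is precisely your first option.

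One caution on your second option: the direct symplectic row-reduction sketch is not as routine as you suggest. Showing that the principal congruence subgroup of $Sp_{2g}(\Z)$ is generated by its ``elementary'' congruence matrices is essentially the content of the congruence subgroup problem for $Sp_{2g}$, and for $g=1$ it even fails; the Bass--Milnor--Serre theorem for $g\geq 2$ is a genuinely deep result, not a straightforward adaptation of the transvection argument. So you should cite it rather than attempt the reduction by hand.
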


We would like to prove Lemma \ref{ModPGenerators} using the general procedure afforded by the following lemma,
whose proof is easy and omitted.

\begin{lem}\label{GeneratingGroups}
Consider the short exact sequence of groups:

\[
1 \longrightarrow A \longrightarrow G \stackrel{\rho}{\longrightarrow} B \longrightarrow 1
\]

\noindent and generating sets $S$ for $A$ and $S_B$ for $B$. Let $S_B' \subset G$ denote a set that projects onto $S_B$. The group $G$ is generated by $S_A \cup S_B'$.
\end{lem}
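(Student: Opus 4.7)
The plan is to take an arbitrary element $g \in G$ and exhibit it as a word in $S_A \cup S_B'$ by first matching its image in $B$ with a lift built from $S_B'$, and then using the fact that the resulting discrepancy is forced to lie in $A$, where it can be expressed in terms of $S_A$.

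More precisely, I would first consider $\rho(g) \in B$. Since $S_B$ generates $B$, there is an expression $\rho(g) = s_1^{\epsilon_1} \cdots s_n^{\epsilon_n}$ with each $s_i \in S_B$ and $\epsilon_i = \pm 1$. For each $s_i$ choose a fixed lift $\tilde{s}_i \in S_B'$ (this is possible by the hypothesis that $S_B'$ projects onto $S_B$), and set $g' = \tilde{s}_1^{\epsilon_1} \cdots \tilde{s}_n^{\epsilon_n} \in G$. Because $\rho$ is a homomorphism, $\rho(g') = \rho(g)$, so $g(g')^{-1} \in \ker(\rho)$. Using the identification of $\ker(\rho)$ with $A$ provided by the short exact sequence, I can then invoke the assumption that $S_A$ generates $A$ to write $g(g')^{-1} = a_1^{\delta_1} \cdots a_m^{\delta_m}$ with $a_i \in S_A$, $\delta_i = \pm 1$. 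Combining gives
\[
g = a_1^{\delta_1} \cdots a_m^{\delta_m}\, \tilde{s}_1^{\epsilon_1} \cdots \tilde{s}_n^{\epsilon_n},
\]
which is a word in $S_A \cup S_B'$.

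There is essentially no obstacle here; the only step that might be mistaken for a subtlety is the implicit identification of $A$ with its image in $G$, but this identification is exactly the content of the short exact sequence. This is consistent with the author's remark that the proof is routine and omitted, and it is why the lemma serves cleanly as a template for attacking Lemma \ref{ModPGenerators} by applying it to the sequence $1 \to \I_{g,1} \to Mod_{g,1}[p] \to Sp_{2g}(\F_p)[\text{something}] \to 1$, with the $p$th powers of Dehn twists providing lifts of a generating set for the quotient.
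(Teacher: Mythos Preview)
Your argument is correct and is exactly the standard one; the paper itself omits the proof, calling it ``easy and omitted,'' so there is nothing to compare against. As a minor aside, in your closing remark the relevant quotient is $Sp_{2g}(\Z)[p]$ rather than something over $\F_p$, but this does not affect the proof of the lemma.
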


\noindent The short exact sequence we wish to use comes from the following short exact sequence:

\[
1 \longrightarrow \I_{g,1} \longrightarrow Mod_{g,1} \stackrel{\Psi}{\longrightarrow} Sp_{2g}(\Z) \longrightarrow 1
\]

Let $E_{i,j}$ be the $g\times g$ matrix whose entries are all zero save the $(i,j)$ and $(j,i)$ entries, which are $1$.
We define the two following symplectic matrices:

\[ M_{i,j} =  \left( \begin{array}{cc}
Id_{g} & pE_{i,j} \\
0 & Id_{g} \end{array} \right)\]

\[ N_{i,j} = \left( \begin{array}{cc}
Id_{g} & 0 \\
pE_{i,j} & Id_{g} \end{array} \right)\]

\begin{definition}
 $Sp_{2g}(\Z)[p] = ker(Sp_{2g}(\Z) \stackrel{\eta}{\longrightarrow} Sp_{2g}(\F_p))$, where $\eta$ is reduction
 of entries modulo $p$.
\end{definition}

\noindent $Sp_{2g}(\Z)[p]$ is called the level-$p$ congruence subgroup of $Sp_{2g}(\Z)$ and consists of matrices
congruent to the identity matrix modulo $p$.

\begin{thm}[Bass, Milnor, Serre, {\cite{Bass}}]\label{SpGenerators}
$Sp_{2g}(\Z)[p]$ is normally generated (as a subgroup of $Sp_{2g}(\Z)$) by the matrices $M_{i,j}$ and $N_{i,j}$.
\end{thm}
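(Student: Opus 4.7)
The plan is to prove this by a symplectic version of Gaussian elimination: given $A \in Sp_{2g}(\Z)[p]$, I will reduce $A$ to the identity by successively multiplying it by conjugates (in $Sp_{2g}(\Z)$) of the matrices $M_{i,j}^{\pm 1}$ and $N_{i,j}^{\pm 1}$, thereby exhibiting $A$ as an element of the normal closure of $\{M_{i,j}, N_{i,j}\}$.

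First I would recall that $Sp_{2g}(\Z)$ is generated by unipotent elementary symplectic transvections, which partition into four families according to the four block positions in the standard symplectic basis: two families of ``off-diagonal'' transvections (parametrized by symmetric $g \times g$ integer matrices) and two families of ``diagonal-block'' transvections of the form $\mathrm{diag}(I + aE,\, (I + aE^T)^{-1})$ for matrix units $E$. The matrices $M_{i,j}$ and $N_{i,j}$ are precisely the two off-diagonal families specialized at $a = p$. I then must show that the normal closure of $\{M_{i,j}, N_{i,j}\}$ also contains the diagonal-block families at $a = p$. For distinct indices $i, j, k$, a direct block computation shows that
\[
\widetilde N_{k,i}\, M_{i,j}\, \widetilde N_{k,i}^{-1}\, M_{i,j}^{-1} \;=\; \mathrm{diag}(I - p\, e_{jk},\; I + p\, e_{kj}) + O(p^2),
\]
where $\widetilde N_{k,i}$ denotes the integer-coefficient analogue of $N_{k,i}$ (with $p$ replaced by $1$) and $e_{jk}$ is the elementary matrix unit; the $O(p^2)$ correction is absorbed back into the normal closure using the $M_{i,j}$ and $N_{i,j}$ already in hand. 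The requirement that $k$ differ from both $i$ and $j$ forces the assumption $g \geq 2$.

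Next I would implement the elimination. Write $A = I + pB$ with $B \in M_{2g}(\Z)$. The symplectic constraint $A^T J A = J$ expands to $p(B^T J + J B) + p^2 B^T J B = 0$, which forces $B^T J + J B \equiv 0 \pmod{p}$; in other words $B$ lies in $\mathfrak{sp}_{2g}(\Z)$ modulo $p$. This symmetry constraint is precisely what allows the four families of $p$-elementary symplectic matrices produced in the previous step to clear the entries of $B$ one block at a time by left and right multiplication: clearing one off-diagonal block automatically simplifies the opposite block via the symmetry, and what remains in the diagonal blocks lies in the span of the diagonal-block family. After finitely many steps $A$ is reduced to the identity.

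The principal difficulty will be the commutator bookkeeping in the first step: a single commutator of two elements of $Sp_{2g}(\Z)[p]$ only lies in $Sp_{2g}(\Z)[p^2]$, so to obtain the missing diagonal-block $p$-elementary generators one must conjugate by elements of $Sp_{2g}(\Z)$ \emph{not} congruent to $I$ modulo $p$ and then iteratively peel off the $O(p^2)$ corrections that arise. The arithmetic is tedious but routine, and cleanly exposing the leading $p$-term is exactly what demands a third index, hence $g \geq 2$; once these generators are in hand, the symplectic elimination is mechanical.
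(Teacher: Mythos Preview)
The paper does not prove this theorem: it is quoted from Bass--Milnor--Serre \cite{Bass} and used as a black box (no proof or sketch appears anywhere in the paper). So there is nothing in the paper to compare your argument against.

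That said, your sketch underestimates the depth of the result. The outline ``produce the missing $p$-elementary families by conjugation/commutators, then run symplectic Gaussian elimination on $A=I+pB$'' is the right shape, but the step you label ``tedious but routine'' is exactly where the real content lives. Two concrete problems:
\begin{itemize}
\item Your handling of the $O(p^{2})$ error is circular. You say the correction ``is absorbed back into the normal closure using the $M_{i,j}$ and $N_{i,j}$ already in hand,'' but absorbing an arbitrary element of $Sp_{2g}(\Z)[p^{2}]$ into that normal closure is a statement of the same strength as the one you are proving (with $p$ replaced by $p^{2}$). Turning this into an honest induction on the $p$-adic level requires a separate argument that the process actually bottoms out.
\item The assertion ``after finitely many steps $A$ is reduced to the identity'' hides the genuine difficulty. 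Clearing one block of $B$ with elementary operations disturbs the others; to guarantee termination over $\Z$ one needs arithmetic input (in the Bass--Milnor--Serre framework this is the vanishing of the relevant Mennicke symbols, which ultimately rests on reciprocity-type facts). A naive elimination does not terminate without this.
\end{itemize}
So while your plan captures the skeleton of the argument, it does not engage with the arithmetic core of the Bass--Milnor--Serre theorem; as written it is a plausibility sketch rather than a proof.
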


The matrices $M_{i,j}$ and $N_{i,j}$ have lifts to $Mod_{g,1}$ that are products of three $p^{th}$-powers of Dehn twists
in $Mod_{g,1}[p]$. Figure \ref{figure:LiftToTwists} shows the simple closed curves in the following lifts. 
Consider such a matrix and the usual symplectic basis $\{a_1, b_1,...,a_g, b_g\}$. 
The matrix does one of two things to our basis:

\begin{itemize}
\item $M_{i,j}$ sends $b_i\mapsto b_i + pa_j$ and $b_j\mapsto b_j + pa_i$ and fixes every other basis element
\item $N_{i,j}$ sends $a_i\mapsto a_i + pb_j$ and $a_j\mapsto a_j + pb_i$ and fixes every other basis element
\end{itemize}

The mapping class $T^{-p}_{a_i}T^{p}_{\delta}T^{-p}_{a_j}$ projects to $M_{i,j}$.
The mapping class $T^{-p}_{b_i}T^{p}_{\epsilon}T^{-p}_{b_j}$ projects to $N_{i,j}$. This can be checked easily on the basis elements $\{a_1, b_1,..., a_g, b_g\}$.

\begin{figure}[t]
\centering

\centerline{\psfig{file=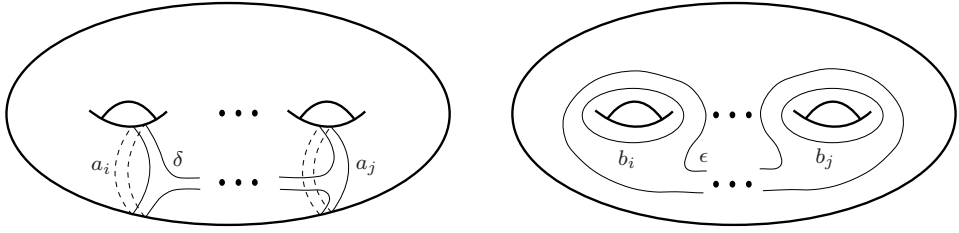,width=\textwidth}}
\caption{Curves involved in the lifts of $M_{i,j}$ and $N_{i,j}$}
\label{figure:LiftToTwists}
\end{figure}

In our proof of Lemma \ref{ModPGenerators} we will also need to make use of the following relation among Dehn twists. See \cite{Farb}, Fact 3.7, for instance.

\begin{lem}
\label{lemma:ConjTwists}
Let $T_a$ be a Dehn twist and $f$ be any other element of $Mod_{g,1}$ then $fT_af^{-1} = T_{f(a)}$.
\end{lem}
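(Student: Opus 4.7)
The plan is to prove this identity first at the level of homeomorphisms and then pass to mapping classes. Recall that a Dehn twist $T_a$ is defined by choosing an annular neighborhood $A \cong S^1 \times [0,1]$ of the simple closed curve $a$ in $\Sigma_{g,1}$, fixing a parametrization, and then setting $T_a$ equal to the identity outside $A$ and to the standard right-handed twist $(\theta, t) \mapsto (\theta + 2\pi t, t)$ inside $A$. Thus $T_a$ depends only on the isotopy class of $a$ and the orientation of $\Sigma_{g,1}$ (not on orientations of $a$ or of the annulus).

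First I would pick a representative homeomorphism $\varphi \in Homeo^+(\Sigma_{g,1}, \partial\Sigma_{g,1})$ of the mapping class $f$ and choose an annular neighborhood $A$ of $a$. The conjugate $\varphi \circ T_a \circ \varphi^{-1}$ is the identity outside $\varphi(A)$, and $\varphi(A)$ is an annular neighborhood of the simple closed curve $\varphi(a)$, which represents the isotopy class $f(a)$. Inside this annulus, the conjugate acts as $\varphi$-transport of the standard twist, which is again a homeomorphism of an annulus that is the identity on the boundary and has the same effect on an arc crossing the core as the original twist.

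The key geometric point, and the one requiring the most care, is that the conjugate twist is right-handed (rather than left-handed) about $\varphi(a)$. This amounts to checking that the handedness of a twist is determined by the orientation of the ambient surface together with the annulus it is supported in, and is preserved by orientation-preserving homeomorphisms. Since $\varphi$ is orientation-preserving on $\Sigma_{g,1}$, it carries the standard right-handed twist on $A$ to the standard right-handed twist on $\varphi(A)$, so $\varphi \circ T_a \circ \varphi^{-1}$ and $T_{\varphi(a)}$ agree as homeomorphisms supported in the same annulus and performing the same twist. Passing to mapping classes yields $fT_af^{-1} = T_{f(a)}$, and independence from the choice of representative $\varphi$ follows because the isotopy class $f(a)$ only depends on $f$.

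The main obstacle is bookkeeping the orientation and handedness conventions; everything else is essentially a change-of-coordinates argument. Since this is a well-known fact (as noted, it appears as Fact~3.7 in \cite{Farb}), I would likely just cite that reference in the actual exposition rather than reproduce the argument in full.
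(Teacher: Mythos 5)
Your argument is correct and is exactly the standard change-of-coordinates proof; the paper itself gives no proof of this lemma, simply citing Fact~3.7 of \cite{Farb}, which is where the argument you sketch (conjugating the twist supported in an annulus and checking that handedness is preserved by an orientation-preserving homeomorphism) is carried out. So your proposal is consistent with the paper's treatment, and your closing remark that one would simply cite \cite{Farb} matches what the paper does.
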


\begin{proof}[{Proof of Lemma \ref{ModPGenerators}}]
The symplectic representation of the mapping class group gives us the short exact sequence:

\[
1 \longrightarrow \I_{g,1} \longrightarrow Mod_{g,1} \stackrel{\Psi}{\longrightarrow} Sp_{2g}(\Z) \longrightarrow 1
\]

\noindent If we restrict $\Psi$ to the subgroup $Mod_{g,1}[p] < Mod_{g,1}$ we get the short exact sequence:

\[
1\longrightarrow \I_{g,1} \longrightarrow Mod_{g,1}[p] \stackrel{\Psi}{\longrightarrow} Sp_{2g}(\Z)[p] \longrightarrow 1
\]

\noindent From the latter exact sequence we may obtain a generating set for 
$Mod_{g,1}[p]$ by concatenating a generating set for $\I_{g,1}$ with the lift of a generating set for 
$Sp_{2g}(\Z)[p]$ to $Mod_{g,1}[p]$ by Lemma \ref{GeneratingGroups}.

$Sp_{2g}(\Z)[p]$ is normally generated (in $Sp_{2g}(\Z)$) by $p^{th}$-powers of the matrices $M_{i,j}$ and $N_{i,j}$ by Theorem \ref{SpGenerators}. Such matrices are the image of $p^{th}$-powers of Dehn twists under $\Psi$ (up to composition with elements of $\I_{g,1}$). Lemma \ref{lemma:ConjTwists} tells us that conjugates of $p^{th}$-powers of Dehn twists are still $p^{th}$-powers of Dehn twists. We consequently have a generating set for $Mod_{g,1}[p]$ that consists of $\I_{g,1}$ and all $p^{th}$-powers of Dehn twists, which we denote $D_p$.\qedhere
\end{proof}

The following lemma also follows from the above proof.

\begin{lem}
An element $f\in Mod_{g,1}[p]$ may be written as $f = f_1 \cdot f_2$ where $f_1\in \I_{g,1}$ and $f_2$ is a product of $p^{th}$-powers of Dehn twists.
\end{lem}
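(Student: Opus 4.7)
The plan is to leverage the short exact sequence
\[
1 \longrightarrow \I_{g,1} \longrightarrow Mod_{g,1}[p] \stackrel{\Psi}{\longrightarrow} Sp_{2g}(\Z)[p] \longrightarrow 1
\]
already used in the proof of Lemma \ref{ModPGenerators}. Given $f \in Mod_{g,1}[p]$, the strategy is to first construct a product $f_2$ of $p^{th}$-powers of Dehn twists with the property that $\Psi(f_2) = \Psi(f)$. Once such an $f_2$ is in hand, setting $f_1 := f \cdot f_2^{-1}$ produces an element of $\ker(\Psi) = \I_{g,1}$, and then $f = f_1 \cdot f_2$ is the desired factorization.

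To build $f_2$, I would use Theorem \ref{SpGenerators} to write $\Psi(f)$ as a finite product of conjugates (in $Sp_{2g}(\Z)$) of the matrices $M_{i,j}^{\pm 1}$ and $N_{i,j}^{\pm 1}$. Since $\Psi:Mod_{g,1} \to Sp_{2g}(\Z)$ is surjective, each conjugating matrix admits a lift to some mapping class in $Mod_{g,1}$ (not necessarily in $Mod_{g,1}[p]$). Moreover, each $M_{i,j}$ and $N_{i,j}$ has an explicit lift to $Mod_{g,1}[p]$ as a product of three $p^{th}$-powers of Dehn twists, displayed in Figure \ref{figure:LiftToTwists}. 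Conjugating such a lift by any mapping class and invoking Lemma \ref{lemma:ConjTwists} yields another product of $p^{th}$-powers of Dehn twists, since $f T_c^p f^{-1} = T_{f(c)}^p$. Assembling these conjugated lifts in the same order as in the expression for $\Psi(f)$ produces an element $f_2 \in Mod_{g,1}[p]$ that is, by construction, a product of $p^{th}$-powers of Dehn twists and satisfies $\Psi(f_2) = \Psi(f)$.

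Finally, $f_1 := f \cdot f_2^{-1}$ is automatically in $\I_{g,1}$ by exactness, and the factorization $f = f_1 \cdot f_2$ is the statement of the lemma. There is no genuine obstacle here: the argument is essentially a bookkeeping repackaging of the proof of Lemma \ref{ModPGenerators}. The one point worth underlining is that the factorization is one-sided — all $\I_{g,1}$-factors are collected on the left and all twist powers on the right — which is possible precisely because $\I_{g,1}$ is a normal subgroup, so any interleaving of Torelli elements and $p^{th}$-powers of Dehn twists in a word for $f$ can be shuffled past one another at the cost of replacing Torelli elements by conjugates of themselves, which remain in $\I_{g,1}$.
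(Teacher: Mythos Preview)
Your proposal is correct and is essentially the argument the paper has in mind: the paper does not give a separate proof of this lemma but merely notes that it ``also follows from the above proof'' of Lemma~\ref{ModPGenerators}, and what you have written is precisely the way in which it follows. The key ingredients---the short exact sequence, the explicit lifts of $M_{i,j}$ and $N_{i,j}$ to products in $D_p$, surjectivity of $\Psi$, and Lemma~\ref{lemma:ConjTwists}---are exactly those used in that proof, and your construction of $f_2$ and $f_1 = f f_2^{-1}$ is the standard section-and-kernel argument underlying Lemma~\ref{GeneratingGroups}.
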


Now we may calculate the image of these generators under our homomorphisms $\tau_1^S$ and $\tau_1^Z$ 
to understand the images of these homomorphisms. For each quotient $\N_k^*$ we have a short exact sequence:

\[
1 \longrightarrow \G_{k+1}^* \longrightarrow \G \longrightarrow \N_k^* \longrightarrow 1
\]

\noindent and from this a 5-term exact sequence in homology.

\begin{align*}
H_2(\G; \F_p) &\longrightarrow H_2(\N_k^*; \F_p) \longrightarrow \frac{\G_{k+1}^*}{[\G,\G_{k+1}^*](\G_{k+1}^*)^p} \\
 \longrightarrow &H_1(\G; \F_p) \longrightarrow H_1(\N_k^*; \F_p) \longrightarrow 0
\end{align*}

\noindent Usually $H_1(\G_{k+1}^*;\F_p)_{{\N}_k^*}$ appears instead of $\frac{\G_{k+1}^*}{[\G,\G_{k+1}^*](\G_{k+1}^*)^p}$ in
such a 5-term exact sequence. In \cite{Stallings} Stallings shows that in this case these terms are isomorphic. 

\subsection{The Case of Stallings} Since $\G$ is a free group of rank $2g$ we have that $H_2(\G;\F_p)$ is trivial 
and $H_1(\G; \F_p) \cong H_1(\N_k^*; \F_p) \cong \F_p^{2g}$. For the case of Stallings 
$\frac{\G_{k+1}^S}{[\G,\G_{k+1}^S](\G_{k+1}^S)^p} \cong \el_{k+1}^S$, and our 5-term exact sequence reduces to:

\[
0 \longrightarrow H_2(\N_k^S; \F_p) \longrightarrow \el_{k+1}^S \longrightarrow \F_p^{2g} \stackrel{\cong}{\longrightarrow} \F_p^{2g} \longrightarrow 0
\]

\noindent so that $\el_{k+1}^S \cong H_2(\N_k^S; \F_p)$. In particular, we may consider the codomain of $\tau_1^S$ 
to be $Hom(\F_p^{2g}, H_2(\N_1^S; \F_p))$. Since $N_1^S \cong H_1(\G;\F_p) \cong \F_p^{2g}$ we may better understand
$H_2(\N_1^S; \F_p)$ by the split exact sequence given by the universal coefficient theorem:

\[
0 \longrightarrow H_2(\F_p^{2g})\otimes\F_p \longrightarrow H_2(\F_p^{2g}; \F_p) \longrightarrow Tor(H_1(\F_p^{2g}), \F_p) \longrightarrow 0
\]

\noindent Since $\F_p^{2g}$ is an abelian $p-$group we know that $H_2(\F_p^{2g})\cong\bigwedge^2(\F_p^{2g})$. See Chapter VI in \cite{Brown}, for instance.
By the unnatural splitting of the sequence we have that:

\[
H_2(\F_p^{2g}; \F_p) \cong (H_2(\F_p^{2g})\otimes\F_p)\oplus(Tor(H_1(\F_p^{2g}), \F_p)) \cong (\bigwedge^2\F_p^{2g})\oplus\F_p^{2g}
\]

\noindent and we may consider the image of $\tau_1^S$ to be contained in $Hom(\F_p^{2g}, (\bigwedge^2\F_p^{2g})\oplus\F_p^{2g})$.

\subsection{The Case of Zassenhaus} For Zassenhaus we have:

\[
\el_2^Z \neq \frac{\G_2^Z}{[\G,\G_2^Z](\G_2^Z)^p} = \frac{\G_2^S}{[\G,\G_2^S](\G_2^S)^p} = \el_2^S
\]

\noindent Rather $\el_2^Z$ is a quotient of $\el_2^S$. In particular, $\el_2^Z = \el_2^S/Q$ where $Q = \G_3^Z/\G_3^S$. We show that $Q \cong \F_p^{2g}$ and that $\el_2^Z$ contains $\bigwedge^2\F_p^{2g}$ as a subgroup. We conclude that $\el_2^Z \cong \bigwedge^2\F_p^{2g}$.
First recall that $\G_2^S = \G_2^Z = [\G,\G]\G^p$. 
Next note that for $p\geq 3$ we have $\G_3^S = [\G,[\G,\G]\G^p]([\G,\G]\G^p)^p < \G_3^Z = [\G,[\G,\G]]\G^p$. 

Now we show $Q \cong \F_p^{2g}$. Let $u\in [\G,[\G,\G]]$ and $v\in \G^p$. Both $u$ and $v$ are elements
of $\G_3^Z$. Note that the images of $u$ and $v$ commute in the quotient $Q$ since $Q$ is a subgroup of an abelian group. Since $[\G,[\G,\G]] < \G_3^S$ the image of $u$ in $Q$ is trivial, and $Q \cong \G^p/(\G_3^S\cap\G^p)$. Note that both $[\G^p,\G^p]$ and $\G^{p^2}$ are subgroups of $\G_3^S\cap\G^p$ so that $Q \cong \F_p^{2g}$.

It remains to find an injection $j:\bigwedge^2\F_p^{2g}\longrightarrow \el_2^Z$. Consider the inclusion $i:\bigwedge^2 \F_p^{2g} \longrightarrow \el_2^S$ defined by
$(a\wedge b) \mapsto ([a',b']\text{ mod }\G_3^S)$, where $a'$ and $b'$ are lifts of $a$ and $b$ respectively to $\G$.
Also consider the surjection $\rho:\el_2^S \longrightarrow \el_2^Z$, which is the quotient map by $Q$. We define $j:\bigwedge^2\F_p^{2g}\longrightarrow \el_2^Z$ by $j = \rho \circ i$. Note that $ker(j) \cong image(i) \cap ker(\rho)$, which is trivial since

\begin{align*}
image(i) &= \frac{[\G,\G]}{\G_3^S}\text{,}\\
ker(\rho) &= \frac{\G_3^Z}{\G_3^S}\text{,}
\end{align*}

\noindent and $[\G,\G]\cap\G_3^Z = [\G,[\G,\G]] < \G_3^S$. Therefore, $j$ is injective. We conclude that $\el_2^Z \cong \bigwedge^2\F_p^{2g}$ as desired.

In summary we have the following:
\begin{itemize}
\item $image(\tau_1^S) \subset Hom(\F_p^{2g}, (\bigwedge^2\F_p^{2g})\oplus\F_p^{2g})$
\item $image(\tau_1^Z) \subset Hom(\F_p^{2g}, \bigwedge^2\F_p^{2g})$
\end{itemize}

Now we note that:

\[
Hom(\F_p^{2g}, (\bigwedge^2\F_p^{2g})\oplus\F_p^{2g})) \cong Hom(\F_p^{2g}, \bigwedge^2\F_p^{2g}) \oplus Hom(\F_p^{2g}, \F_p^{2g}))
\]

\noindent The first factor contains $\bigwedge^3\F_p^n$ as a subgroup by the injection:

\[
x\wedge y \wedge z \mapsto x\otimes(y\wedge z) + y\otimes(z\wedge x) + z\otimes(x\wedge y)
\]

To understand the second factor we need the following definition. Let $\Omega$ be the skew-symmetric matrix \[ \left( \begin{array}{cc}
0 & Id_{g} \\
-Id_{g} & 0  \end{array} \right)\]

\begin{definition}
$\mathfrak{sp}_{2g}(\F_p)$ is the additive group of $2g \times 2g$ matrices $M$ with entries in $\F_p$ satisfying $M^T\Omega + \Omega M = 0$.
\end{definition}

It is independently a theorem of Perron \cite{Perron1}, Putman \cite{Putman2}, and Sato \cite{Sato} that this is the abelianization of 
$Sp_{2g}(\Z)[p]$ in our case when $p$ is an odd prime and $g \geq 2$. The abelianization map:

\[
abel:Sp_{2g}(\Z)[p]\longrightarrow \mathfrak{sp}_{2g}(\F_p)
\]

\noindent uses the observation that $X\in Sp_{2g}(\Z)[p]$ can be written
as $X = Id +pA$ for some matrix $A$ and is defined by:

\[
abel(X) = abel(Id + pA) = A\ \text{mod}\ p
\]

\noindent As a consequence of having an explicit abelianization map, one may conclude the following theorem.

\begin{thm}[Perron, {\cite{Perron1}}; Putman, \cite{Putman2}; Sato, \cite{Sato}]\label{ModPAbel}
For $g \geq 3$ and $p\geq 3$ odd, the abelianization $H_1(Sp_{2g}(\Z)[p])$ is isomorphic to $Sp_{2g}(\Z)[p]/Sp_{2g}(\Z)[p^2]$.
\end{thm}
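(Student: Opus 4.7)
The plan is to exploit the explicit description of the abelianization map $abel: Sp_{2g}(\Z)[p]\to\mathfrak{sp}_{2g}(\F_p)$ given just above the statement. By the theorem of Perron, Putman, and Sato, this map induces an isomorphism
\[
Sp_{2g}(\Z)[p]/[Sp_{2g}(\Z)[p],Sp_{2g}(\Z)[p]] \stackrel{\cong}{\longrightarrow} \mathfrak{sp}_{2g}(\F_p).
\]
So to prove $H_1(Sp_{2g}(\Z)[p])\cong Sp_{2g}(\Z)[p]/Sp_{2g}(\Z)[p^2]$, it suffices to identify $\ker(abel)$ with $Sp_{2g}(\Z)[p^2]$.

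First I would check the easy inclusion $Sp_{2g}(\Z)[p^2] \subseteq \ker(abel)$. Any element of $Sp_{2g}(\Z)[p^2]$ has the form $Id+p^2B$ for some integer matrix $B$; writing this as $Id+pA$ with $A=pB$, we see $A\equiv 0\ \text{mod}\ p$, so $abel(Id+p^2B)=0$. For the reverse inclusion, if $X=Id+pA\in Sp_{2g}(\Z)[p]$ satisfies $abel(X)=A\ \text{mod}\ p=0$, then $A=pB$ for some integer matrix $B$, so $X=Id+p^2B$. Since $X$ is automatically symplectic (as an element of $Sp_{2g}(\Z)[p]$), this places $X$ in $Sp_{2g}(\Z)[p^2]$. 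Combining these, $\ker(abel)=Sp_{2g}(\Z)[p^2]$ as subgroups of $Sp_{2g}(\Z)[p]$.

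As a sanity check (and really the content making the two descriptions of the kernel agree), I would verify directly that $[Sp_{2g}(\Z)[p],Sp_{2g}(\Z)[p]]\subseteq Sp_{2g}(\Z)[p^2]$. Given $X=Id+pA$ and $Y=Id+pB$, a short computation modulo $p^2$ gives
\[
XY\equiv Id+pA+pB, \qquad X^{-1}\equiv Id-pA,\qquad Y^{-1}\equiv Id-pB \pmod{p^2},
\]
and hence $XYX^{-1}Y^{-1}\equiv Id \pmod{p^2}$, so $[X,Y]\in Sp_{2g}(\Z)[p^2]$. Together with the Perron/Putman/Sato surjection, this confirms $[Sp_{2g}(\Z)[p],Sp_{2g}(\Z)[p]]=\ker(abel)=Sp_{2g}(\Z)[p^2]$.

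Since almost all content is supplied by the previously cited abelianization theorem, there is no serious obstacle; the only thing to be careful about is ensuring symplecticity of $Id+p^2B$ is automatic from membership in $Sp_{2g}(\Z)[p]$, rather than something that must be verified separately by lifting from $\mathfrak{sp}_{2g}(\F_p)$. The payoff is a clean statement: both descriptions of $H_1(Sp_{2g}(\Z)[p])$ arise from the same quotient, with $abel$ factoring through $Sp_{2g}(\Z)[p]/Sp_{2g}(\Z)[p^2]$ and inducing the claimed isomorphism.
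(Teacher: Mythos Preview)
Your proposal is correct and follows exactly the approach the paper indicates: the paper does not give a detailed proof but simply remarks that the result is ``a consequence of having an explicit abelianization map,'' and your argument fills in precisely that computation of $\ker(abel)=Sp_{2g}(\Z)[p^2]$.
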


\noindent We must also understand the action of a Dehn twist on homology before proceeding. Let $i(\cdot, \cdot)$ denote the algebraic intersection pairing on $H_1(\Sigma_{g,1})$. For a discussion of the following lemma see, for instance, \cite{Farb}, Proposition 6.3.

\begin{lem}\label{lemma:TwistonHomology}
Let $[y]\in H_1(\Sigma_{g,1})$ be a primitive element with unbased simple closed curve representative $\gamma$. The $k^{th}$-power of a Dehn twist $T_{\gamma}^k$ about $\gamma$ induces the map on homology

\begin{align*}
[T_{\gamma}^k]:H_1(\Sigma_{g,1})&\longrightarrow H_1(\Sigma_{g,1})\\
[x]&\longmapsto [x] + k\cdot i([x],[y])[y]
\end{align*}
\end{lem}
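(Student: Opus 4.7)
The plan is first to reduce to the case $k=1$ by induction, and then to establish that single case by a local geometric argument near $\gamma$.

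For the inductive reduction, I would assume the formula $[T_\gamma]([x]) = [x] + i([x],[y])[y]$ and use the fact that $i([y],[y]) = 0$ to conclude that applying $T_\gamma$ once more to the displacement term contributes nothing. Concretely,
\begin{align*}
[T_\gamma^{k+1}]([x]) &= [T_\gamma]\bigl([x] + k\cdot i([x],[y])[y]\bigr) \\
&= [x] + k\cdot i([x],[y])[y] + i\bigl([x] + k\cdot i([x],[y])[y],\, [y]\bigr)[y] \\
&= [x] + (k+1)\cdot i([x],[y])[y],
\end{align*}
so it suffices to treat the case $k=1$.

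For $k=1$, I would represent $[x]$ by an oriented smooth closed curve $\alpha$ transverse to $\gamma$, with intersection points $p_1, \dots, p_n$ having signs $\epsilon_1, \dots, \epsilon_n$. The Dehn twist $T_\gamma$ is supported in an annular neighborhood $A$ of $\gamma$, so $T_\gamma(\alpha)$ agrees with $\alpha$ outside $A$. Inside $A$, near each $p_i$, the model picture of a twist replaces the transverse sub-arc of $\alpha$ with an arc that is homotopic (rel endpoints on $\partial A$) to the original sub-arc concatenated with $\gamma$ traversed $\epsilon_i$ times. Passing to homology then yields
\[
[T_\gamma(\alpha)] = [\alpha] + \sum_{i=1}^n \epsilon_i[\gamma] = [x] + i([x],[y])[y],
\]
which is the claim.

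The main obstacle is purely bookkeeping of signs: one must align the handedness of the Dehn twist with the orientations chosen for $\gamma$ and with the algebraic intersection pairing, so that the local signs $\epsilon_i$ produced by the twist picture coincide with the signs defining $i([x],[y])$. Everything else is a routine local computation in an annulus and is standard, matching the treatment already cited from Farb--Margalit.
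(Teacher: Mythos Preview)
Your argument is correct and is essentially the standard proof. Note, however, that the paper does not actually prove this lemma: it simply refers the reader to Farb--Margalit, Proposition~6.3, for a discussion. Your inductive reduction to $k=1$ together with the local annulus computation is exactly the argument one finds in that reference, so there is nothing to compare.
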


The following lemma is also needed to understand the image of $p^{th}$-powers of Dehn twists.

\begin{lem}\label{lemma:TwistContributions}
Let $\gamma \subset \Sigma_{g,1}$ be a simple closed curve and $y\in\G$ be a based curve isotopic to $\gamma$. Let $x\in\G$ with $|x\cap y| = n$. Then

\[
T_{\gamma}^p(x)x^{-1} = \prod\limits_{i = 1}^{n}z_iy^{\pm p}z_i^{-1}
\]

\noindent for some $z_i \in \G$.
\end{lem}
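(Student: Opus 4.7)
The plan is to analyze the effect of $T_\gamma^p$ on $x$ by decomposing $x$ at its intersection points with $y$ and tracking how the Dehn twist modifies each crossing. First I would isotope a representative of $x$ so that it meets $\gamma$ transversely in exactly $n$ points $p_1, \ldots, p_n$, labeled in the order they are traversed along $x$ starting from the basepoint. Let $\epsilon_i \in \{\pm 1\}$ denote the sign of the intersection at $p_i$.

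Next, break $x$ into sub-arcs $x = x_0 x_1 \cdots x_n$, where $x_0$ runs from the basepoint to $p_1$, each $x_i$ for $1 \leq i \leq n-1$ runs from $p_i$ to $p_{i+1}$, and $x_n$ runs from $p_n$ back to the basepoint. For each $i$, let $\beta_i = x_0 x_1 \cdots x_{i-1}$ be the initial sub-path of $x$ from the basepoint up to $p_i$, and let $\gamma_i$ denote the curve $\gamma$ regarded as a loop based at $p_i$. The geometric description of the Dehn twist says that $T_\gamma^p$ acts on $x$ by inserting $\gamma_i^{\epsilon_i p}$ at each intersection point $p_i$, giving
\[
T_\gamma^p(x) = x_0 \gamma_1^{\epsilon_1 p} x_1 \gamma_2^{\epsilon_2 p} x_2 \cdots \gamma_n^{\epsilon_n p} x_n.
\]

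Then I would compute $T_\gamma^p(x) x^{-1}$ by repeatedly pulling each inserted factor $\gamma_i^{\epsilon_i p}$ to the front through conjugation by the initial sub-path $\beta_i$. A straightforward induction on $n$ (or a direct telescoping cancellation of the $x_i x_i^{-1}$ pairs) yields
\[
T_\gamma^p(x) x^{-1} = \prod_{i=1}^{n} \beta_i \, \gamma_i^{\epsilon_i p} \, \beta_i^{-1}.
\]
Finally, since $\gamma_i$ is $\gamma$ based at $p_i$, the loop $\beta_i \gamma_i \beta_i^{-1}$ is freely homotopic to $y$, and hence equals $w_i y^{\pm 1} w_i^{-1}$ in $\G$ for some $w_i \in \G$ (the sign absorbing any orientation discrepancy between the local chosen orientation of $\gamma$ at $p_i$ and that of $y$ at the basepoint). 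Setting $z_i = w_i$ (after absorbing signs into $\pm p$), we obtain $T_\gamma^p(x) x^{-1} = \prod_{i=1}^{n} z_i y^{\pm p} z_i^{-1}$ as claimed.

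The main technical point to be careful about is the basepoint bookkeeping: one must verify that the local insertions $\gamma_i^{\epsilon_i p}$ produced by the Dehn twist really do conjugate back to $y^{\pm p}$ via $\beta_i$ in $\G$, rather than just up to free homotopy. This is handled by choosing consistent basing paths and noting that conjugates of $y^{\pm 1}$ remain conjugates of $y^{\pm 1}$, so raising to the $p^{th}$ power commutes with conjugation. No other step involves real difficulty.
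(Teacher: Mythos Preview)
Your proposal is correct and follows essentially the same approach as the paper. The paper's proof also labels the intersection points $p_1,\dots,p_n$, takes the initial subarc $a_i$ of $x$ from the basepoint to $p_i$ (your $\beta_i$), and writes $T_\gamma^p(x) = \bigl(\prod_i a_i b_i^{-1} y^{\epsilon(p_i)p} b_i a_i^{-1}\bigr)x$, where $b_i$ is the subarc of $y$ from the basepoint to $p_i$; the only cosmetic difference is that the paper names the conjugating element explicitly as $z_i = a_i b_i^{-1}$ rather than appealing to ``freely homotopic implies conjugate'' as you do.
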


\begin{proof}
Let $p_1,..., p_n$ denote the intersection points of $x$ and $y$. Let $p_0$ denote the basepoint of $\G$. Let $a_i$ denote the the subarc of $x$ that goes from $p_0$ to $p_i$. Let $b_i$ denote the subarc of $y$ that goes from $p_0$ to $p_i$. One can easily see by direct calculation that  $T_{\gamma}^p(x) = \left(\prod\limits_{i=1}^{n}a_ib_i^{-1}y^{\epsilon(p_i)p}b_ia_i^{-1}\right)x$, where $\epsilon(p_i) = \pm 1$ depending on the sign of the intersection between $x$ and $y$ at $p_i$. 
\end{proof}

\begin{thm}
$image(\tau_1^Z) \cong \bigwedge^3\F_p^{2g}$ for $g\geq 2$.
\end{thm}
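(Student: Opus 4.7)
The plan is to compute $\tau_1^Z$ on the generating set for $Mod_{g,1}[p] = \I_{g,1}^Z(1)$ provided by Lemma \ref{ModPGenerators}, namely $\I_{g,1} \cup D_p$, and show that the image equals the subgroup $\bigwedge^3 \F_p^{2g}$ of $Hom(\F_p^{2g}, \el_2^Z) \cong Hom(\F_p^{2g}, \bigwedge^2 \F_p^{2g})$ described just above the theorem statement.

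First I would handle elements $f \in \I_{g,1}$. Since $\G_2 \subseteq \G_2^Z$ and $\G_3 \subseteq \G_3^Z$, the reductions $H \to \F_p^{2g}$ and $\el_2 \to \el_2^Z$ fit into a commutative diagram relating Johnson's classical $\tau_1$ to our $\tau_1^Z$. Under the identifications $\el_2 \cong \bigwedge^2 H$ and $\el_2^Z \cong \bigwedge^2 \F_p^{2g}$ (the latter established in the preceding subsection), this natural transformation is simply reduction of coefficients modulo $p$, which carries $\bigwedge^3 H \subset Hom(H, \bigwedge^2 H)$ onto $\bigwedge^3 \F_p^{2g} \subset Hom(\F_p^{2g}, \bigwedge^2 \F_p^{2g})$. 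Johnson's theorem that $\tau_1(\I_{g,1}) \cong \bigwedge^3 H$ then gives $\tau_1^Z(\I_{g,1}) = \bigwedge^3 \F_p^{2g}$ for $g \geq 2$.

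Next I would show that $\tau_1^Z$ annihilates every $p^{th}$-power of a Dehn twist $T_c^p \in D_p$. By Lemma \ref{lemma:TwistContributions}, $T_c^p(x) x^{-1}$ is a product of conjugates $z_i y^{\pm p} z_i^{-1}$, where $y$ is a based loop isotopic to $c$. The key observation is that $y^p \in \G^p \subset \G_p^Z \subset \G_3^Z$ for $p \geq 3$, and the commutator identity
\[
z y^p z^{-1} = [z, y^p] \cdot y^p
\]
together with $[z, y^p] \in [\G, \G_p^Z] \subset \G_{p+1}^Z \subset \G_3^Z$ shows that every such conjugate lies in $\G_3^Z$. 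Hence $T_c^p(x) x^{-1} \in \G_3^Z$ and $\tau_1^Z(T_c^p) = 0$.

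Combining these two computations, the image of $\tau_1^Z$ coincides with the image of its restriction to $\I_{g,1}$, which is $\bigwedge^3 \F_p^{2g}$. The most delicate point is the first step: one must verify that Johnson's inclusion $\bigwedge^3 H \hookrightarrow Hom(H, \bigwedge^2 H)$ is natural with respect to reduction modulo $p$, so that reduction really does send Johnson's image onto $\bigwedge^3 \F_p^{2g}$. The role of the hypothesis that $p$ is an odd prime shows up precisely in the containment $\G^p \subset \G_p^Z \subset \G_3^Z$, which would fail for $p=2$.
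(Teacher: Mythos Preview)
Your proposal is correct and follows essentially the same route as the paper: compute $\tau_1^Z$ on the generating set $\I_{g,1}\cup D_p$, identify the restriction to $\I_{g,1}$ with Johnson's $\tau_1$ reduced mod $p$, and kill the $D_p$ contribution using $\G^p\subset\G_3^Z$ for odd $p$. The only cosmetic difference is that the paper first argues conjugation is trivial mod $\G_3^Z$ (via $[\G,\G^p]<\G_3^Z$) and then kills $y^{pk}$, whereas you bundle both into the single observation that each conjugate $z_iy^{\pm p}z_i^{-1}$ already lies in $\G_3^Z$.
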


\begin{proof}
We begin by considering $f\in\I_{g,1}$ and $[x]\in H_1(\Sigma_{g,1};\F_p)$, the class of the simple closed curve $x$.
We claim that $\tau_1^Z(f)([x]) = \tau_1(f)([x])\ \text{mod}\ p$. 
First observe that $f(x)x^{-1} \in [\G, \G] = \G_2 < \G_2^Z$. This allows the following equalities:

\begin{align*}
\tau_1^Z(f)([x]) &= f(x)x^{-1}\ \text{mod}\ \G_3^Z \\
	&= f(x)x^{-1}\ \text{mod}\ [\G, [\G, \G]]\G^p\text{, by definition of }\G_3^Z \\
	&= f(x)x^{-1}\ \text{mod}\ \G_3\G^p\text{, by definition of }\G_3\\
	&= \tau_1(f)([x])\ \text{mod}\ p 
\end{align*}

\noindent Johnson in \cite{Johnson0} proves $\tau_1(\I_{g,1}) \cong \bigwedge^3 \Z^{2g}$. It follows that $\tau_1^Z(\I_{g,1}) \cong \bigwedge^3\F_p^{2g}$.

Now let us consider the case when $f = T_{\gamma}^p \in D_p$. Let $y$ be a based curve in the isotopy class of $\gamma$ as in Lemma \ref{lemma:TwistContributions}. Since $[\G,\G^p]<\G_3^Z$ we have that $z_iy^{\pm p}z_i^{-1} \equiv y^{\pm p} \text{ mod }\G_3^Z$ for any $z_i \in \G$. This fact and Lemma \ref{lemma:TwistContributions} let us conclude that

\[
f(x)x^{-1} = \prod\limits_{i = 1}^{n}z_iy^{\pm p}z_i^{-1} \equiv y^{p\cdot k}\text{ mod }\G_3^Z
\]

\noindent where $n = |x\cap y|$ and $k$ is the sum of the exponents $\pm p$ of $y$. Next notice that for $p\geq 3$ we have $\G^p < \G_3^Z$, so in fact $f(x)x^{-1} \equiv 0\ \text{mod}\ \G_3^Z$. Hence, $image(\tau_1^Z) = \tau_1^Z(\I_{g,1}) \cong \bigwedge^3\F_p^{2g}$.\qedhere
\end{proof}

\begin{thm}
For $g\geq 2$ there is an unnaturally split short exact sequence:

\[
0\longrightarrow \bigwedge^3\F_p^{2g} \longrightarrow image(\tau_1^S) \longrightarrow \mathfrak{sp}_{2g}(\F_p) \longrightarrow 0
\]
\end{thm}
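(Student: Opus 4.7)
The plan is to exploit the generating set for $Mod_{g,1}[p]$ from Lemma \ref{ModPGenerators}, writing each $f \in Mod_{g,1}[p]$ as $f = f_1 f_2$ with $f_1 \in \I_{g,1}$ and $f_2$ a product of $p^{th}$-powers of Dehn twists, and to use the unnatural splitting $\el_2^S \cong \bigwedge^2 \F_p^{2g} \oplus \F_p^{2g}$ derived above from the universal coefficient theorem. Let $\pi_1$ and $\pi_2$ denote the two coordinate projections of $\el_2^S$, corresponding to the images of $[\G,\G]$ and $\G^p$ in $\G_2^S/\G_3^S$ respectively. Since $\tau_1^S$ is a homomorphism into the abelian group $Hom(\F_p^{2g}, \el_2^S)$, I would compute the two components $\pi_i \circ \tau_1^S$ separately on each family of generators and then assemble.

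First, for $f \in \I_{g,1}$ the identity $f(x)x^{-1} \in [\G,\G]$ places $\tau_1^S(f)$ entirely in the $\bigwedge^2 \F_p^{2g}$ factor, so $\pi_2 \circ \tau_1^S|_{\I_{g,1}} = 0$. The inclusion $\G_3 \subset \G_3^S$ (a consequence of $[\G,[\G,\G]] < [\G, \G_2^S]$) provides a natural surjection $\el_2 \to \el_2^S$ landing in the first factor, under which $\pi_1 \circ \tau_1^S(f)$ is identified with $\tau_1(f)$ reduced mod $p$; by Johnson's theorem this yields $\pi_1 \circ \tau_1^S(\I_{g,1}) = \bigwedge^3 \F_p^{2g}$. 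Second, for $f = T_{\gamma}^p$ with $y$ a based loop isotopic to $\gamma$, Lemma \ref{lemma:TwistContributions} together with the containment $[\G, \G^p] < [\G, \G_2^S] \subset \G_3^S$ gives
\[
T_{\gamma}^p(x)x^{-1} \equiv y^{pk} \mod \G_3^S, \quad k = i([x],[y]).
\]
Since $y^{pk} \in \G^p$, we obtain $\pi_1(\tau_1^S(f)) = 0$ and $\pi_2(\tau_1^S(f)) = \left([x] \mapsto i([x],[y])[y]\right)$. Because this formula vanishes on $\I_{g,1}$, the composition $\pi_2 \circ \tau_1^S$ factors through $\Psi : Mod_{g,1}[p] \to Sp_{2g}(\Z)[p]$; evaluating on lifts of the Bass--Milnor--Serre generators $M_{i,j}, N_{i,j}$ from Theorem \ref{SpGenerators} shows that on $I + pA \in Sp_{2g}(\Z)[p]$ the composite returns $A \mod p$. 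This is precisely the abelianization map of Theorem \ref{ModPAbel}, which by Perron--Putman--Sato surjects onto $\mathfrak{sp}_{2g}(\F_p)$.

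To conclude, for $f = f_1 f_2$ the linearity of $\tau_1^S$ gives $\tau_1^S(f_1) \in \bigwedge^3 \F_p^{2g} \oplus 0$ and $\tau_1^S(f_2) \in 0 \oplus \mathfrak{sp}_{2g}(\F_p)$, so
\[
image(\tau_1^S) = \bigwedge^3 \F_p^{2g} \oplus \mathfrak{sp}_{2g}(\F_p)
\]
as subgroups of $Hom(\F_p^{2g}, \el_2^S)$, which yields the desired short exact sequence with surjection $\pi_2$ and splitting induced by $\pi_1$. The splitting is unnatural precisely because the decomposition $\el_2^S \cong \bigwedge^2 \F_p^{2g} \oplus \F_p^{2g}$ used throughout comes from the unnaturally split universal coefficient sequence. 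The main technical point will be checking that $\pi_2 \circ \tau_1^S$ agrees on the nose with the symplectic abelianization: this rests on the careful reduction of Lemma \ref{lemma:TwistContributions} modulo $\G_3^S$ and on verifying that the resulting endomorphism $[x] \mapsto i([x],[y])[y]$ matches $A \mod p$ on each of the matrix generators $M_{i,j}$ and $N_{i,j}$.
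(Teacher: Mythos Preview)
Your argument is correct and follows essentially the same route as the paper: compute $\tau_1^S$ on $\I_{g,1}$ via Johnson's theorem reduced mod $p$, compute it on $p^{th}$-powers of Dehn twists via Lemma \ref{lemma:TwistContributions}, and identify the latter with the symplectic abelianization $abel\circ\Psi$.

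The one genuine difference is in packaging. The paper only uses the projection $r:Hom(\F_p^{2g},\el_2^S)\to Hom(\F_p^{2g},\F_p^{2g})$, establishes $r\circ\tau_1^S = abel\circ\Psi$ on $D_p$, and then runs a diagram chase to show the induced map $\overline{r}\circ\hat{\tau}:\mathfrak{sp}_{2g}(\F_p)\to Hom(\F_p^{2g},\F_p^{2g})$ is injective. You instead exploit the internal direct sum $\el_2^S = image(i)\oplus Q$, where $image(i)=[\G,\G]\G_3^S/\G_3^S$ and $Q=\G_3^Z/\G_3^S$ is exactly the image of $\G^p$; since $y^{pk}\in\G^p\subset\G_3^Z$, you get the stronger statement $\pi_1\circ\tau_1^S(D_p)=0$, and the image decomposes directly as $\bigwedge^3\F_p^{2g}\oplus\mathfrak{sp}_{2g}(\F_p)$ without any diagram chase. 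This is a cleaner argument. One small correction: your closing remark that the splitting is unnatural ``because the decomposition of $\el_2^S$ comes from the universal coefficient sequence'' does not match what you actually did. Your projections $\pi_1,\pi_2$ come from the \emph{natural} decomposition $\el_2^S=image(i)\oplus Q$ (both summands are characteristic), not from the UCT splitting; so your proof in fact exhibits a canonical splitting of the sequence in the statement.
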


\begin{proof}
First we show that $\tau_1^S(\I_{g,1}) \cong \tau_1^Z(\I_{g,1})$. Consider the map:

\begin{align*}
j:\tau_1^S(\I_{g,1}) &\longrightarrow \tau_1^Z(\I_{g,1}) \\
 f(x)x^{-1}\text{ mod }\G_3^S &\longmapsto f(x)x^{-1}\text{ mod }\G_3^Z
\end{align*}

\noindent where $f\in\I_{g,1}$. This map is surjective since it is given by taking the quotient by $Q = \G_3^Z/\G_3^S$. To see that $j$ is injective consider an element of $ker(j)$. This element is given by some $f\in\I_{g,1}$ such that $f(x)x^{-1}\text{ mod }\G_3^S \in Q$ for all $x\in\G$. We know that $f(x)x^{-1}$ is always in $\G_2$ since $f\in \I{g,1}$. It must then be the case that $f(x)^{-1}\text{ mod }\G_3^S \in image(i)$, where again $i$ is the inclusion $\bigwedge^2\F_p^{2g}\hookrightarrow \el_2^S$ defined earlier in this section. We also saw earlier in this section that $image(i) \cap Q$ is trivial. Therefore $j$ is injective and hence an isomorphism. It follows that $\tau_1^S(\I_{g,1}) \cong \bigwedge^3\F_p^{2g} < Hom(\F_p^{2g}, \bigwedge^2 \F_p^{2g})$.

Now recall that we have the unnaturally split short exact sequence:

\[
1\longrightarrow Hom(\F_p^{2g}, \bigwedge^2 \F_p^{2g}) \longrightarrow Hom(\F_p^{2g},\el_2^S) \stackrel{r}{\longrightarrow} Hom(\F_p^{2g},\F_p^{2g}) \longrightarrow 1
\]

\noindent Below we construct an injective composition of maps

\[
\mathfrak{sp}_{2g}(\F_p) \stackrel{\hat{\tau}}{\longrightarrow} \frac{Hom(\F_p^{2g},\el_2^S)}{\bigwedge^3 \F_p^{2g}} \stackrel{\overline{r}}{\longrightarrow} Hom(\F_p^{2g},\F_p^{2g})
\]

\noindent where $\overline{r}$ is induced by the map $r$ in the short exact sequence above. Again $\bigwedge^3\F_p^{2g} < Hom(\F_p^{2g}, \bigwedge^2 \F_p^{2g})$ is the image of $\I_{g,1}$ under $\tau_1^S$. This yields the desired short exact sequence of the theorem.

The calculation above that $\tau_1^S(\I_{g,1}) \cong \bigwedge^3 \F_p^{2g}$ lets us first define the map

\[
\hat{\tau}':\frac{\I_{g,1}^S(1)}{\I_{g,1}}\longrightarrow \frac{Hom(\F_p^{2g},\el_2^S)}{\bigwedge^3 \F_p^{2g}},
\]

\noindent induced by $\tau_1^S$. Recall that $\I_{g,1}^S(1)/\I_{g,1} \cong Sp_{2g}(\Z)[p]$. Also recall that $Sp_{2g}(\Z)[p^2]$ normally generated by matrices that lift to products of $p^2$-powers of Dehn twists in $Mod_{g,1}$. Next note that $p^2$-powers of Dehn twists are in $\I_{g,1}^S(2)$ since $\G^{p^2} < \G_3^S$. Our map $\hat{\tau}'$ then induces the map:

\[
\hat{\tau}:\frac{Sp_{2g}(\Z)[p]}{Sp_{2g}(\Z)[p^2]}\cong \mathfrak{sp}_{2g}(\F_p)\longrightarrow \frac{Hom(\F_p^{2g},\el_2^S)}{\bigwedge^3 \F_p^{2g}}
\]

Before showing that $\overline{r}\circ\hat{\tau}$ is injective we show that $r\circ\tau_1^S = abel \circ \Psi$ on $p^{th}$-powers of Dehn twists. Again $\Psi:Mod_{g,1}\longrightarrow Sp_{2g}(\Z)$ is the usual symplectic representation, and $abel:Sp_{2g}(\Z)[p]\longrightarrow\mathfrak{sp}_{2g}(\F_p)$ is the abelianization map.

Consider the case when $f = T_{\gamma}^p \in D_p$. So that $f \notin \I_{g,1}$ 
we will only consider the case when $\gamma$ is nonseparating. Let $y$ be a based representative of $\gamma$. As in the Zassenhaus calculation, we again have that $f(x)x^{-1} \equiv y^{p\cdot k}\ \text{mod}\ \G_3^S$ for some $k$ since Lemma \ref{lemma:TwistContributions} still applies, and $[\G,\G^p]<\G_3^S$ so that conjugation is trivial in the quotient $\el_2^S$. However, it is now the case that $\G^p \not< \G_3^S$ when $p\geq 3$, so $f(x)x^{-1}$ is not necessarily trivial modulo $\G_3^S$. As in the case for Zassenhaus, we appeal to Lemma \ref{lemma:TwistContributions} to write:

\[
T_{\gamma}^p(x)x^{-1} = \prod\limits_{i = 1}^{n}z_iy^{\pm p}z_i^{-1} \equiv  y^{p\cdot k}\ \text{mod}\ \G_3^S
\]

\noindent Where $k$ is again the sum of the exponents $\pm p$ of $y$. We saw in the proof of Lemma \ref{lemma:TwistContributions} that the sign of each exponent is determined by the sign of the corresponding intersection. We conclude that $k$ is the algebraic intersection number of $[x]$ with $[y]$, denoted $i([x],[y])$. Now we have:

\[
\tau_1^S(f)([x]) \equiv  y^{p\cdot i([x],[y])}\ \text{mod}\ \G_3^S
\]

\noindent and

\[
r\circ\tau_1^S(f)([x]) \cong i([x],[y])[y] \text{ mod }p \in H_1(\Sigma_{g,1};\F_p) \cong \F_p^{2g}
\]

Recall by Lemma \ref{lemma:TwistonHomology}:

\[
\Psi(f)([x]) =  [x] + p\cdot i([x],[y])[y]
\]

\noindent It follows that

\[
abel\circ\Psi(f)([x]) = i([x],[y])[y] \text{ mod }p
\]

\noindent Therefore, $abel\circ\Psi(f) = r\circ\tau_1^S(f)$ when $f$ is a product of $p^{th}$-powers of Dehn twists.

Now that we have seen $r\circ\tau_1^S = abel \circ \Psi$ on $p^{th}$-powers of Dehn twists, we may show $\overline{r}\circ\hat{\tau}$ to be injective. We have the following diagram:

\[
\begin{CD}
\I_{g,1}^S(1)   @>\text{$\Psi$}>> \I_{g,1}^S(1)/\I_{g,1}   @>\text{abel}>>  \mathfrak{sp}_{2g}(\F_p)\\
@V\text{$\tau_1^S$}VV   @V\text{$\hat{\tau}'$}VV     @V\text{$\hat{\tau}$}VV             \\
Hom(\F_p^{2g},\el_2^S)  @>>> Hom(\F_p^{2g},\el_2^S)/\bigwedge^3\F_p^{2g}      @>\text{id}>>  Hom(\F_p^{2g},\el_2^S)/\bigwedge^3\F_p^{2g} \\
@V\text{r}VV    @V\text{$\overline{r}$}VV    @V\text{$\overline{r}$}VV \\
Hom(\F_p^{2g},\F_p^{2g}) @>\text{id}>> Hom(\F_p^{2g},\F_p^{2g}) @>\text{id}>> Hom(\F_p^{2g},\F_p^{2g})
\end{CD}
\]

\noindent We use the outermost square of the above diagram to argue the desired injectivity. Consider $\phi\in ker(\overline{r}\circ\hat{\tau})$. We may find a mapping class $f\in\I_{g,1}^S(1)$ such that $abel\circ\Psi(f) = \phi$. We have seen that $r\circ\tau_1^S(f) = \phi$ as well. Since the bottom map of the outer square is the identity map, $\phi$ itself must be trivial.\qedhere
\end{proof}

We emphasize that the above work is packaged in the commutativity of the following diagram.

\[
\begin{CD}
\I_{g,1}               @>>> Mod_{g,1}[p]          @>\text{$\Psi$}>>  Sp_{2g}(\Z)[p]           @>>> 1\\
@V\text{$\tau_1 \text{ mod } p$}VV     @V\text{$\tau_1^S$}VV                    @V\text{abel}VV    \\
\bigwedge^3 \F_p^{2g}  @>>> image(\tau_1^S)       @>>>               \mathfrak{sp}_{2g}(\F_p) @>>> 0
\end{CD}
\]

%%%%%%%%%%%%%%%%%%%%%%%%%%%%%%%%%%%%%%%%%%%%%%%%%%%%
%%%%%%%%%%%%%%%%%%%%%%%%%%%%%%%%%%%%%%%%%%%%%%%%%%%%

\section{Morita's image restriction}\label{Section:Morita}
In \cite{Morita} Morita describes a subgroup of $Hom(H, \el_{k+1})$ still containing the image of $\tau_k$. Using a similar argument we find subgroups of $Hom(H_1(\Sigma_{g,1};\F_p), \el_{k+1}^*)$ filling an analogous role for our mod-p Johnson homomorphisms.

We begin by outlining the work of Morita that generalizes completely. Consider a 2-chain $\sigma$ of our 
free group $\G$ such that $\partial \sigma = -z$, where $z$ represents the boundary in $\Sigma_{g, 1}$. Let $f \in Mod_{g,1}$ and define
$\sigma_f = \sigma - f_*(\sigma)$. Since a mapping class $f$ preserves the boundary pointwise, $\sigma_f$ is a 
2-cycle. Furthermore, since $H_2(\G) = 0$ there exists a 3-chain $c_f$ such that $\partial c_f = \sigma_f$. 
For a 3-cycle $c$ in $\G$ let $c'(k) = c\ \text{mod}\ \G_{k+1}^*$ be the image of $c$ in the chains of $\N_k^*$. 
Now suppose $f \in \I_{g,1}^*(k)$, one of the Zassenhaus, Stallings, or usual Johnson filtration terms. In
this case $\partial c_f'(k) = \sigma'(k) - f(\sigma'(k)) = 0$ since $f$ acts trivially on $\N_k^*$ by the 
definitions of $\I_{g,1}^*(k)$. Hence, $c_f'(k)$ is a representative of an element of $H_3(\N_k^*)$. We may 
then define a map ${\tau_k^*}'(f) = [c_f'(k)]$ so that ${\tau_k^*}':\I_{g, 1}^*(k) \rightarrow H_3(\N_k^*)$.
By Theorem \ref{MoritaModP} below ${\tau_k^*}'$ is a well-defined homomorphism.

Consider the short exact sequence:

\[
0 \longrightarrow \el_{k+1}^* \longrightarrow \N_{k+1}^* \longrightarrow \N_k^* \longrightarrow 1
\]

\noindent Associated to such an exact sequence is the Lyndon-Hochschild-Serre spectral sequence $H_p(\N_k^*; H_q(\el_{k+1}^*;R)) \Rightarrow H_{p+q}(\N_{k+1}^*;R)$, where $R$ is $\Z$ if $*$ is
empty and $\F_p$ otherwise. See \cite{Brown}, Section VII, for instance. Let us denote the differentials 
$d_{p,q}^r:E_{p,q}^r\longrightarrow E_{p-2,q+r-1}^r$. The following is due to Morita.

\begin{thm}[Morita, {\cite{Morita}, Theorem 3.1}]
The map $\tau_k':\I_{g, 1}(k) \rightarrow H_3(\N_k)$ is well-defined and satisfies the commutativity condition $\tau_k = d \circ \tau_k'$.
\end{thm}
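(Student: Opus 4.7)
The plan is to follow Morita's original argument closely, with the essential inputs being that $\G$ is free (so $H_2(\G) = H_3(\G) = 0$) and that elements of $\I_{g,1}(k)$ act trivially on $\N_k$. I would proceed in three steps: well-definedness of $\tau_k'$, the homomorphism property, and the identity $\tau_k = d \circ \tau_k'$.

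For well-definedness, first consider independence from the choice of $c_f$. If $c_f$ and $\tilde{c}_f$ both satisfy $\partial = \sigma_f$, then $c_f - \tilde{c}_f$ is a $3$-cycle in $\G$, and since $H_3(\G) = 0$ it equals $\partial e$ for some $4$-chain $e$. Reducing modulo $\G_{k+1}$ preserves this, so $c_f'(k)$ and $(\tilde{c}_f)'(k)$ represent the same class in $H_3(\N_k)$. For independence from $\sigma$, any other valid choice $\sigma_0$ satisfies $\sigma_0 - \sigma = \partial \tau$ (using $H_2(\G) = 0$), and the chain $c_f + \tau - f_*\tau$ serves as a valid $3$-chain for $\sigma_0$. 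The resulting difference in $H_3(\N_k)$ is the class of $(\tau - f_*\tau)'(k)$, which vanishes because $f_*$ descends to the identity on $\N_k$ and hence acts trivially on chains in any functorial resolution.

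The homomorphism property uses the candidate $3$-chain $c_{fg} := c_f + f_*c_g$, whose boundary is $(\sigma - f_*\sigma) + f_*(\sigma - g_*\sigma) = \sigma - (fg)_*\sigma$ as required. Reducing modulo $\G_{k+1}$, the relation $(f_*c_g)'(k) = c_g'(k)$ then yields $\tau_k'(fg) = \tau_k'(f) + \tau_k'(g)$.

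The main obstacle is the identity $\tau_k = d \circ \tau_k'$, where $d$ is the transgression
\[
d_2 \colon E^2_{3,0} = H_3(\N_k) \longrightarrow E^2_{1,1} = H_1(\N_k; \el_{k+1}) \cong Hom(H, \el_{k+1}),
\]
the last identification using that $\el_{k+1}$ is central in $\N_{k+1}$ (so the coefficients are untwisted), that $H_1(\N_k) = H$ is free abelian, and the self-duality of $H$ under the intersection pairing. The plan is to compute $d_2$ at the chain level: given a $3$-cycle $c$ in $\N_k$, choose a lift to a $3$-chain $\tilde{c}$ in $\N_{k+1}$; then $\partial \tilde{c}$ lands in the subcomplex coming from $\el_{k+1}$ and represents $d_2[c]$. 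Applied to $c = c_f'(k)$ with canonical lift $c_f$, this produces $\partial c_f = \sigma - f_*\sigma$ read modulo $\G_{k+2}$. The crux is to identify this representative with the homomorphism $[x] \mapsto f(x)x^{-1} \bmod \G_{k+2}$. I would do this by fixing an explicit $\sigma$ realizing the relator $\prod_{i=1}^g [x_i, y_i]$ in a symplectic basis and tracking the contribution of each commutator to $\partial c_f$; once $\sigma$ is pinned down, Morita's computation carries over verbatim, and the anticipated obstacle is just the bookkeeping of matching the transgression against this product-of-commutators model for $\sigma$.
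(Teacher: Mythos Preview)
The paper does not give its own proof of this statement; it is quoted directly from Morita and the paper only asserts afterward that the same argument carries over to establish the mod-$p$ analogue. Your proposal is a faithful outline of Morita's original argument, so in that sense it matches exactly what the paper defers to.

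The one place where your sketch is loose is the chain-level recipe for $d_{3,0}^2$: the phrase ``$\partial\tilde c$ lands in the subcomplex coming from $\el_{k+1}$'' needs to be made precise via the Hochschild--Serre filtration on the bar complex of $\N_{k+1}$ rather than an actual subcomplex, and it is exactly at this point that Morita's explicit choice of $\sigma$ (a bar $2$-chain realizing the relator $\prod_i [x_i,y_i]$) does the real work. Once that bookkeeping is carried out your identification with $\tau_k$ goes through, so the obstacle you flag is the correct one.
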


\noindent Morita's proof also works to prove the analogous result for our mod-p cases.

\begin{thm}\label{MoritaModP}
The map ${\tau_k^*}':\I_{g, 1}^*(k) \rightarrow H_3(\N_k^*;\F_p)$ is well-defined and satisfies the commutativity condition $\tau_k^* = d \circ {\tau_k^*}'$.
\end{thm}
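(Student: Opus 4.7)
The plan is to mimic Morita's argument in \cite{Morita} essentially verbatim, replacing integer coefficients with $\F_p$-coefficients and the lower central series by the Stallings or Zassenhaus mod-$p$ central series. The argument naturally breaks into three pieces: showing ${\tau_k^*}'$ is well defined, checking that it is a homomorphism, and verifying the commutativity identity $\tau_k^* = d\circ {\tau_k^*}'$ with the transgression $d$.

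For well-definedness I would show that the class of $c_f'(k)$ in $H_3(\N_k^*;\F_p)$ does not depend on either choice made in its construction. The 3-chain $c_f$ with $\partial c_f = \sigma_f$ is only defined up to a 3-cycle in the bar complex of $\G$; since $\G$ is free, $H_n(\G;\F_p)=0$ for $n\geq 2$, so every such 3-cycle is a boundary and becomes zero after projecting to the chains of $\N_k^*$. Likewise, two 2-chains $\sigma,\tilde\sigma$ with $\partial\sigma=\partial\tilde\sigma=-z$ differ by a 2-cycle, which by the same vanishing is a boundary $\partial\eta$. Then $c_f - \tilde c_f$ differs from $(1-f_*)\eta$ by a 3-cycle, and the factor $(1-f_*)$ becomes trivial on $C_*(\N_k^*;\F_p)$ because $f\in\I_{g,1}^*(k)$ acts as the identity on $\N_k^*$ by definition.

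To see ${\tau_k^*}'$ is a homomorphism I would use the chain-level identity $c_{fg} = c_f + f_*(c_g)$, which has the right boundary since
\[
\partial(c_f+f_*(c_g)) = (1-f_*)\sigma + f_*((1-g_*)\sigma) = \sigma - (fg)_*\sigma = \sigma_{fg}.
\]
Reducing modulo $\G_{k+1}^*$ kills the action of $f_*$, giving ${\tau_k^*}'(fg) = {\tau_k^*}'(f)+{\tau_k^*}'(g)$ in $H_3(\N_k^*;\F_p)$.

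The main content is the commutativity with the transgression in the Lyndon-Hochschild-Serre spectral sequence of the central extension $0\to\el_{k+1}^*\to\N_{k+1}^*\to\N_k^*\to 1$. Because $\el_{k+1}^*$ is an $\F_p$-vector space and a trivial $\N_k^*$-module, the relevant transgression can be unpacked as a map from $H_3(\N_k^*;\F_p)$ into a piece of $H_1(\N_k^*;\F_p)\otimes\el_{k+1}^*\cong\mathrm{Hom}(H_1(\Sigma_{g,1};\F_p),\el_{k+1}^*)$. Morita's argument lifts $c_f'(k)$ to a 3-chain in $\N_{k+1}^*$, computes its boundary, and identifies the resulting cocycle with the assignment $[x]\mapsto f(x)x^{-1}\bmod\G_{k+2}^*$, i.e.\ precisely $\tau_k^*(f)$. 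The hard part of the proof will be this last spectral sequence bookkeeping: one needs to be careful that the chain-level lifts behave correctly with $\F_p$ coefficients and with either of our mod-$p$ central series, since the lemma $[\I_{g,1}^*(k),\G_l^*]<\G_{k+l}^*$ (Lemma \ref{lemma:JohnsonRange}) must play the role of Andreadakis's integral version. Apart from this careful translation, no new ideas beyond those in \cite{Morita} are required; both axioms that power his proof, namely that $f$ acts trivially on $\N_k^*$ and that $\G$ has vanishing higher $\F_p$-homology, hold in our setting.
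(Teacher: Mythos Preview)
Your proposal is correct and follows exactly the approach the paper takes: the paper's own proof consists of the single remark that ``Morita's proof also works to prove the analogous result for our mod-$p$ cases,'' and you have faithfully sketched the steps of that argument (well-definedness via $H_{\geq 2}(\G;\F_p)=0$, the homomorphism identity $c_{fg}=c_f+f_*c_g$, and the spectral-sequence computation identifying $d_{3,0}^2\circ{\tau_k^*}'$ with $\tau_k^*$), correctly flagging Lemma~\ref{lemma:JohnsonRange} as the mod-$p$ replacement for Andreadakis's lemma. One terminological nit: the map $d$ here is the $E^2$-differential $d_{3,0}^2$ rather than a transgression in the strict sense, but this does not affect the substance of your argument.
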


\noindent Note that we have a short exact sequence following from the definition of the third page term 
$E_{1,1}^3 = ker(d_{1,1}^2)/image(d_{3,0}^2)$:

\[
1 \longrightarrow E_{3,0}^2 \stackrel{d_{3,0}^2}{\longrightarrow} E_{1,1}^2 \stackrel{\rho}{\longrightarrow} E_{1,1}^3 \longrightarrow 1
\]

\noindent This is isomorphic to the short exact sequence:

\[
0 \longrightarrow H_3(\N_k^*;R) \stackrel{d_{3,0}^2}{\longrightarrow} \el_{k+1}^*\otimes H_1(\N_k^*;R) \stackrel{\rho}{\longrightarrow} \el_{k+2}^* \longrightarrow 0
\]

\noindent It follows from exactness that

\begin{align*}
image(\tau_k) &= image(d_{3,0}^2 \circ \tau_k')\\
&\subset image(d_{3,0}^2)\\
&= ker(\rho) \\
&\cong E_{3,0}^2 \\
&= H_3(\N_k) \\
\end{align*}

\noindent Hence, we obtain the following result of Morita:

\begin{cor}
$image(\tau_k) < H_3(\N_k)$.
\end{cor}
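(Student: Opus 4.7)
The plan is to derive the corollary directly from the integral analogue of Theorem \ref{MoritaModP}, which provides the factorization $\tau_k = d \circ \tau_k'$ with $\tau_k'$ landing in $H_3(\N_k)$. Once this factorization is in hand, one immediately has $image(\tau_k) \subseteq image(d)$, so the problem reduces to identifying $image(d)$ with $H_3(\N_k)$.

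For this identification I would appeal to the Lyndon-Hochschild-Serre spectral sequence for the central extension $0 \to \el_{k+1} \to \N_{k+1} \to \N_k \to 1$. The relevant $E^2$ entries are $E_{3,0}^2 = H_3(\N_k)$ and $E_{1,1}^2 = \el_{k+1} \otimes H_1(\N_k)$, using centrality to trivialize coefficients; after identifying $H_1(\N_k) \cong H$ and $\el_{k+1} \otimes H \cong Hom(H, \el_{k+1})$ via the perfect intersection pairing on $H$, the latter becomes exactly the codomain of $\tau_k$. Under these identifications the map $d$ appearing in Morita's theorem coincides with the differential $d_{3,0}^2$ of the spectral sequence.

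From the definition $E_{1,1}^3 = ker(d_{1,1}^2)/image(d_{3,0}^2)$, extracting the short exact sequence
\[
0 \to E_{3,0}^2 \xrightarrow{d_{3,0}^2} E_{1,1}^2 \xrightarrow{\rho} E_{1,1}^3 \to 0
\]
gives $image(d_{3,0}^2) = ker(\rho) \cong H_3(\N_k)$, and combining this with $image(\tau_k) \subseteq image(d_{3,0}^2)$ yields the desired containment. The step I expect to be the main obstacle is verifying the injectivity of $d_{3,0}^2$ required to make the sequence genuinely short exact on the left; this ultimately rests on $H_2(\G) = 0$, which holds because $\G$ is free, and which is the same input that enables Morita's construction of the lift $\tau_k'$ in the first place.
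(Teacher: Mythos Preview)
Your proposal is correct and follows essentially the same route as the paper: invoke Morita's factorization $\tau_k = d_{3,0}^2 \circ \tau_k'$, then use the short exact sequence $0 \to E_{3,0}^2 \to E_{1,1}^2 \to E_{1,1}^3 \to 0$ from the Lyndon--Hochschild--Serre spectral sequence to identify $image(d_{3,0}^2)$ with $H_3(\N_k)$. The paper presents this identification (and the identification $E_{1,1}^3 \cong \el_{k+2}$) as given, whereas you explicitly flag the injectivity of $d_{3,0}^2$ as the point requiring care; this is a fair observation, though the justification is really deferred to Morita's original argument rather than reproved here.
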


\noindent Morita's proof also works to prove the following corollary to Theorem \ref{MoritaModP} 
with the one additional observation that $E_{3,0}^2 = H_3(\N_k^*;\F_p)$ in the mod-p cases.

\begin{cor}
$image(\tau_k^*) < H_3(\N_k^*;\F_p)$.
\end{cor}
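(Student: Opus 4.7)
The plan is to adapt Morita's integral argument essentially verbatim, with $\F_p$ coefficients replacing $\Z$ coefficients and the mod-$p$ filtration terms replacing the classical ones. The preparatory work is already packaged into Theorem \ref{MoritaModP}, which factors $\tau_k^*$ as $d_{3,0}^2 \circ {\tau_k^*}'$. The entire corollary is then a formal consequence of identifying $\text{image}(d_{3,0}^2)$ with $E_{3,0}^2$ and recognizing what $E_{3,0}^2$ is in the mod-$p$ Lyndon-Hochschild-Serre spectral sequence.

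First, I would invoke Theorem \ref{MoritaModP} to get $\text{image}(\tau_k^*) \subset \text{image}(d_{3,0}^2)$, where $d_{3,0}^2$ is the differential on the second page of the LHS spectral sequence with $\F_p$ coefficients associated to the central extension
\[
0 \longrightarrow \el_{k+1}^* \longrightarrow \N_{k+1}^* \longrightarrow \N_k^* \longrightarrow 1.
\]
Next, I would produce the short exact sequence
\[
1 \longrightarrow E_{3,0}^2 \stackrel{d_{3,0}^2}{\longrightarrow} E_{1,1}^2 \stackrel{\rho}{\longrightarrow} E_{1,1}^3 \longrightarrow 1,
\]
which follows from the definition $E_{1,1}^3 = \ker(d_{1,1}^2)/\text{image}(d_{3,0}^2)$ together with the vanishing $E_{-1,2}^2 = 0$ (so $d_{1,1}^2 = 0$) and the injectivity of $d_{3,0}^2$ on the relevant part (since $E_{5,-1}^2 = 0$). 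This exhibits $\text{image}(d_{3,0}^2) = \ker(\rho) \cong E_{3,0}^2$.

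Finally, the only piece specific to the mod-$p$ setting is the identification $E_{3,0}^2 = H_3(\N_k^*; H_0(\el_{k+1}^*;\F_p)) = H_3(\N_k^*;\F_p)$, using that $H_0$ of any group with trivial $\F_p$ coefficients is just $\F_p$ with trivial action. Chaining the three observations gives $\text{image}(\tau_k^*) \subset H_3(\N_k^*;\F_p)$, as desired.

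I do not anticipate a serious obstacle; the argument is essentially bookkeeping about what $E_{3,0}^2$ is in the $\F_p$-coefficient LHS spectral sequence, and the appropriate identifications of $E_{1,1}^2 \cong \el_{k+1}^* \otimes H_1(\N_k^*;\F_p)$ and $E_{1,1}^3 \cong \el_{k+2}^*$ used in the integral case carry over without change (the latter being a reinterpretation of the 5-term exact sequence that appears earlier in Section \ref{Section:Images} with the appropriate coefficient changes). The only mild subtlety worth flagging in the write-up is that $\el_{k+1}^*$ is an $\F_p$-vector space, so all tensor products in the identification of $E_{1,1}^2$ are over $\F_p$, but this is exactly what ensures the coefficient module on the $E^2$ page is $\el_{k+1}^*$ itself rather than some $\text{Tor}$-correction.
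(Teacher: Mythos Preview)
Your approach is essentially identical to the paper's: factor $\tau_k^*$ through $d_{3,0}^2$ via Theorem \ref{MoritaModP}, use the short exact sequence $E_{3,0}^2 \to E_{1,1}^2 \to E_{1,1}^3$, and then make the one additional observation that $E_{3,0}^2 = H_3(\N_k^*;\F_p)$ in the mod-$p$ spectral sequence. One small slip worth correcting in your write-up: the vanishing of $E_{5,-1}^2$ does not give injectivity of $d_{3,0}^2$ (it only says no $d^2$ differential lands in $E_{3,0}^2$, hence controls $E_{3,0}^3$); the paper likewise simply asserts the short exact sequence without further comment, so this is a point neither treatment pins down.
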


We now describe how $H_3(\N_k^*;\F_p)$ is a subgroup of $Hom(H_1(\Sigma_{g,1};\F_p), \el_{k+1}^*)$ for the two cases 
of Stallings and Zassenhaus. Since $\F_p$ is a field the spectral sequences in each case converge with 
$H_2(\N_{k+1}^*;\F_p) \cong E_{2,0}^{\infty} \oplus E_{1,1}^{\infty} \oplus E_{0,2}^{\infty}$. Further, since 
we are dealing with a first quadrant spectral sequence each of these summands are stabilized by the third page. 
Consequently: 

\[
H_3(\N_k^*;\F_p)\cong ker(\rho) = ker\left(Hom(H_1(\Sigma_{g,1};\F_p), \el_{k+1}^*)\longrightarrow \frac{H_2(\N_{k+1}^*;\F_p)}{E_{2,0}^3\oplus E_{0,2}^3}\right)
\]

\noindent Morita shows that $E_{2,0}^3$ and $E_{0,2}^3$ are trivial for the usual lower central series. For our series this is not the case. We now aim to describe the term $\frac{H_2(\N_{k+1}^*;\F_p)}{E_{2,0}^3\oplus E_{0,2}^3}$ for both Stallings and Zassenhaus. By definition we have:
\begin{itemize}
\item $E_{2,0}^3 = \frac{ker(d_{2,0}^2)}{image(d_{4,-1}^2)}$
\item $E_{0,2}^3 = \frac{ker(d_{0,2}^2)}{image(d_{2,1}^2)}$
\end{itemize}

\subsection{The Case of Stallings} First, we show that the following differential is an isomorphism:

\[
d_{2,0}^2:H_2(\N_k^S;\F_p) \rightarrow H_1(\el_{k+1}^S;\F_p)
\]

\noindent This lets us conclude that $E_{2,0}^3$ is trivial. We begin by considering the commutative
diagram with short exact rows:

\[
\begin{CD}
1 @>>> \G_{k+1}^S @>>> \G @>>> \N_k^S @>>> 1\\
@.       @VVV    @VVV     @|     @.\\
0 @>>> \el_{k+1}^S @>>> \N_{k+1}^S @>>> \N_k^S @>>> 1
\end{CD}
\]

\noindent To each short exact sequence above is associated a long exact sequence in homology. Each long exact
sequence reduces as in Section \ref{Section:Images} to yield the following commutative diagram:

\[
\begin{CD}
H_2(\G;\F_p)    @>>> H_2(\N_k^S;\F_p) @>\text{g}>> \el_{k+1}^S @>>> 0 \\
@VVV                     @|                           @|         @.\\
H_2(\N_{k+1}^S;\F_p) @>>> H_2(\N_k^S;\F_p) @>\text{d}>> \el_{k+1}^S @>>> 0
\end{CD}
\]

\noindent We note that $H_2(\G;\F_p) \cong 0$ since $\G$ is free. Hence $g$ is an isomorphism. By commutativity we conclude that $d_{2,0}^2$ is also an isomorphism.

Now we want to understand the differential:

\[
d_{2,1}^2:H_2(\N_k^S)\otimes \el_{k+1}^S \rightarrow H_2(\el_{k+1}^S;\F_p).
\]
 
\noindent Note that:

\[
H_2(\N_k^S)\otimes \el_{k+1}^S\cong H_2(\N_k^S)\otimes \F_p \otimes \el_{k+1}^S.
\]
 
\noindent We saw in the previous section that $H_2(\N_k^S;\F_p) \cong \el_{k+1}^S$, and by the universal coefficient 
theorem $H_2(\N_k^S;\F_p) \cong H_2(\N_k^S)\otimes \F_p \oplus Tor(H_1(\N_k^S),\F_p)$. 
Hence,

\[
d_{2,1}^2:\frac{\el_{k+1}^S}{Tor(H_1(\N_k^S),\F_p)}\otimes \el_{k+1}^S \rightarrow H_2(\el_{k+1}^S;\F_p).
\]

By the universal coefficient theorem:

\[
H_2(\el_{k+1}^S;\F_p) \cong H_2(\el_{k+1}^S)\otimes\F_p\oplus Tor(\el_{k+1}^S, \F_p),
\]
 
\noindent and by virtue of $\el_{k+1}^S$ being a $\F_p$-vector space we finally we have that our differential is of the form:

\[ 
d_{2,1}^2:\frac{\el_{k+1}^S}{Tor(H_1(\N_k^S),\F_p)}\otimes \el_{k+1}^S \longrightarrow \bigwedge^2\el_{k+1}^S \oplus \el_{k+1}^S
\]

\subsection{The Case of Zassenhaus} For Zassenhaus we still have that:

\[
H_2(\N_k^Z;\F_p) \cong \frac{\G_k^Z}{[\G,\G_k^Z](\G_k^Z)^p}.
\]

\noindent And again, $H_2(\el_k^Z;\F_p) \cong \bigwedge^2\el_k^Z \oplus \el_k^Z$ by the universal coefficient theorem since $\el_k^Z$ is also a $\F_p$-vector space. Hence, we may consider the map:

\[
d_{2,1}^2:\frac{\G_k^Z}{[\G,\G_k^Z](\G_k^Z)^p} \otimes \el_k^Z \longrightarrow \bigwedge^2\el_k^Z \oplus \el_k^Z
\]

\noindent However, we do not have that $d = d_{2,0}^2$ is an injection in the Zassenhaus case. This is because we have the following commutative diagram:

\[
\begin{CD}
H_2(\G;\F_p)    @>>> H_2(\N_k^Z;\F_p) @>\text{g}>> \frac{\G_{k+1}^Z}{[\G,\G_{k+1}^Z](\G_{k+1}^Z)^p} @>>> 0\\
@VVV                     @|                           @VVV         @.\\
H_2(\N_{k+1}^Z;\F_p) @>>> H_2(\N_k^Z;\F_p) @>\text{d}>> \el_{k+1}^Z @>>> 0
\end{CD}
\]

\noindent If $d$ were injective it would also be an isomorphism by the exactness of the bottom row. 
Commutativity would then imply since $g$ is also an isomorphism that 
$\frac{\G_{k+1}^Z}{[\G,\G_{k+1}^Z](\G_{k+1}^Z)^p} \cong \el_{k+1}^Z$.

\subsection{Collecting the Results}

We therefore have the following statements concerning $H_3(\N_{k+1}^*;\F_p)$ as subgroups of the range of our mod-p Johnson homomorphisms:

\begin{itemize}
\item $H_3(\N_{k+1}^S;\F_p) \cong ker\left(Hom(H_1(\Sigma_{g,1};\F_p),\el_{k+1}^S)\longrightarrow \frac{\el_{k+2}^S}{E_S}\right)$
\item $H_3(\N_{k+1}^Z;\F_p) \cong ker\left(Hom(H_1(\Sigma_{g,1};\F_p),\el_{k+1}^Z)\longrightarrow \frac{\frac{\G_{k+1}^Z}{[\G,\G_{k+1}^Z](\G_{k+1}^Z)^p}}{E_Z}\right)$
\end{itemize}
where

\[
E_S = \frac{\bigwedge^2\el_{k+1}^S\oplus\el_{k+1}^S}{image\left(\frac{\el_{k+1}^S}{Tor(H_1(\N_k^S),\F_p)}\otimes \el_{k+1}^S \longrightarrow \bigwedge^2\el_{k+1}^S \oplus \el_{k+1}^S\right)}
\]

\noindent and

\begin{align*}
E_Z &= ker(H_2(\N_k^Z;\F_p)\longrightarrow\el_{k+1}^Z) \\
&\ \ \ \ \ \oplus \frac{\bigwedge^2\el_{k+1}^Z\oplus\el_{k+1}^Z}{image\left(\frac{\G_{k+1}^Z}{[\G,\G_{k+1}^Z](\G_{k+1}^Z)^p} \otimes \el_{k+1}^Z \longrightarrow \bigwedge^2\el_{k+1}^Z \oplus \el_{k+1}^Z\right)}
\end{align*}

%%%%%%%%%%%%%%%%%%%%%%%%%%%%%%%%%%%%%%%%%%%%%%%%%%%%
%%%%%%%%%%%%%%%%%%%%%%%%%%%%%%%%%%%%%%%%%%%%%%%%%%%%

\section{Generating the kernels}\label{Section:Kernels}
In this section we describe generating sets for the kernels of $\tau_1^*$.
We begin by giving a general procedure for obtaining a generating set for the kernel of a surjection.

\begin{lem}\label{lemma:GeneratingKernels}
Consider the short exact sequence of groups:

\[
1 \longrightarrow A \longrightarrow G \stackrel{\rho}{\longrightarrow} B \longrightarrow 1
\]

\noindent and a presentation $\left\langle S_G | R_G \right\rangle$ for $G$. Let $\left\langle S_B | R_B \right\rangle$ be a presentation for $B$ with $S_B = \rho(S_G)$. Write each $r\in R_B$ as a word $\prod\limits_i \rho(s_i)$ in $S_B$. For each $r$ define $w_r = \prod\limits_i s_i$ in $S_G$. The group $A$ is normally generated by $R_B' = \{w_r | r\in R_B\}$.
\end{lem}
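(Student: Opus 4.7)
The plan is to prove both inclusions for $\langle\langle R_B'\rangle\rangle_G = A$, where $\langle\langle R_B'\rangle\rangle_G$ denotes the normal closure of $R_B'$ in $G$. The inclusion $\langle\langle R_B'\rangle\rangle_G \subset A$ is immediate: for each relation $r = \prod_i \rho(s_i) \in R_B$, applying $\rho$ to the lift $w_r = \prod_i s_i$ gives $\rho(w_r) = \prod_i \rho(s_i) = r = 1$ in $B$. Hence $w_r \in \ker \rho = A$, and since $A$ is normal, its normal closure condition forces $\langle\langle R_B'\rangle\rangle_G \subset A$.

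For the reverse containment, set $N = \langle\langle R_B'\rangle\rangle_G$ and consider $q: G \to G/N$. Because $N \subset A$, the map $\rho$ factors as $\bar\rho \circ q$ for a surjection $\bar\rho: G/N \to B$. The strategy is to construct an inverse $\sigma: B \to G/N$ using the universal property of the presentation $\langle S_B | R_B \rangle$. Define a set map $S_B \to G/N$ by sending each $\rho(s) \in S_B$ to the class of $s$ in $G/N$, and extend it to a homomorphism $F(S_B) \to G/N$ from the free group on $S_B$. For each relation $r = \prod_i \rho(s_i) \in R_B$, its image in $G/N$ is the class of $w_r = \prod_i s_i$, which is trivial because $w_r \in R_B' \subset N$. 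Therefore the homomorphism descends to $\sigma: B \to G/N$. Checking $\sigma \circ \bar\rho$ and $\bar\rho \circ \sigma$ on generators shows that both composites are the identity, so $\bar\rho$ is an isomorphism and $A/N = \ker \bar\rho$ is trivial, giving $A = N$.

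The main obstacle is ensuring the set map $S_B \to G/N$ used to build $\sigma$ is actually well-defined: if two distinct $s_1, s_2 \in S_G$ satisfy $\rho(s_1) = \rho(s_2)$, then we would need $s_1 s_2^{-1} \in N$. Under the natural reading of the hypothesis $S_B = \rho(S_G)$, one takes $\rho$ to restrict to a bijection $S_G \to S_B$, so this issue does not arise; otherwise, one must include the corresponding elements $s_1 s_2^{-1}$ in $R_B'$ (they correspond to the trivial relation in $B$ under the identification, and each lies in $A$). Beyond this bookkeeping, the argument is entirely formal: it is the standard presentation-theoretic fact that a short exact sequence is determined by a compatible choice of generators on the quotient together with a set of lifts of the quotient's relations.
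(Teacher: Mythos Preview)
Your proof is correct and follows a genuinely different route from the paper's. The paper argues element-wise: given $g\in A=\ker\rho$, it writes $g$ as a word $w_g$ in $S_G$, pushes forward to a word $w=\rho(w_g)$ in $S_B$ which represents $1\in B$ and hence lies in the normal closure of $R_B$ in the free group on $S_B$, and then lifts that expression letter by letter back to $G$ to exhibit $g$ as a product of conjugates of the $w_r$. Your argument instead quotients $G$ by $N=\langle\langle R_B'\rangle\rangle_G$ and invokes the universal property of the presentation $\langle S_B\mid R_B\rangle$ to build an explicit inverse $\sigma$ to $\bar\rho:G/N\to B$, concluding $A=N$. Both are standard; the paper's approach is a one-line direct computation once one accepts that the word-level lift is well-defined, while yours is more structural and makes the dependence on the presentation explicit. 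You are also more careful than the paper about the bookkeeping issue when $\rho$ is not injective on $S_G$: the paper silently assumes the lift $w\mapsto w'$ recovers $w_g$ exactly (i.e., that $S_G\to S_B$ is a bijection), which is the same hypothesis you isolate in your final paragraph.
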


\begin{proof}
By exactness of our sequence we know that $A = ker(\rho)$. Let $g \in G$ be such that $\rho(g) = 1$. Write $g$ as a word $w_g \in \left\langle S_G \right\rangle$. Then $\rho(w_g)$ is a word $w$ in $\left\langle S_B \right\rangle$. Since $g \in ker(\rho)$ we necessarily have that $w \in \left\langle\left\langle R_B \right\rangle\right\rangle$. We may therefore lift $w$ to obtain a word $w' \in \left\langle \left\langle \rho^{-1}(R_B) \right\rangle \right\rangle = \left\langle \left\langle R_B' \right\rangle \right\rangle$. So we have found a word $w'$ that is a product of conjugates of $R'$ elements and such that $w' = g$. We may therefore generate $A$ by taking a $\rho$-lift of each element of $R_B$ and taking the normal closure of the subgroup of $G$ generated by this collection of elements.\qedhere
\end{proof}

Recall that Lemma \ref{ModPGenerators} gives us a generating set for $Mod_{g,1}[p]$. In order to get
nice generating sets for $ker(\tau_1^*)$ we refine this
generating set for $Mod_{g,1}[p]$. We do this by considering a generating set for $\I_{g,1}$, rather than taking 
it in its entirety. Let $f_{a,b}$ denote the mapping class $T_aT_b^{-1}$, where $a$ and $b$ are disjoint and homologous simple closed curves. We call $f_{a,b}$ a bounding pair map. Recall a mapping class $T_{\gamma}$ is a separating twist when $\gamma$ is a 
separating simple closed curve in $\Sigma_{g,1}$.

\begin{thm}[Powell, {\cite{Powell}}]
$\I_{g,1}$ is generated by separating twists and bounding pair maps.
\end{thm}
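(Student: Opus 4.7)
The plan is to mimic the strategy that produced Lemma \ref{ModPGenerators}, but applied to the classical symplectic short exact sequence
\[
1 \longrightarrow \I_{g,1} \longrightarrow Mod_{g,1} \stackrel{\Psi}{\longrightarrow} Sp_{2g}(\Z) \longrightarrow 1.
\]
First I would do the easy inclusion: both proposed families lie in $\I_{g,1}$. Indeed, a separating curve is null-homologous so its Dehn twist acts trivially on $H$ by Lemma \ref{lemma:TwistonHomology}; and for a bounding pair $a,b$, the two transvections $[x]\mapsto [x]+i([x],[a])[a]$ and $[x]\mapsto [x]+i([x],[b])[b]$ coincide because $[a]=[b]$, so they cancel in $T_aT_b^{-1}$.

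For the nontrivial direction, I would apply Lemma \ref{lemma:GeneratingKernels} to the symplectic sequence. This requires compatible presentations: take a generating set for $Mod_{g,1}$ by Dehn twists (e.g., Humphries generators) and the corresponding symplectic transvections as generators of $Sp_{2g}(\Z)$, and fix a finite presentation of $Sp_{2g}(\Z)$ in those transvection generators. The lemma then says $\I_{g,1}$ is normally generated in $Mod_{g,1}$ by lifts to $Mod_{g,1}$ of the symplectic relators. By Lemma \ref{lemma:ConjTwists}, the conjugate of a separating twist by any mapping class is another separating twist (the image curve is still separating), and similarly the conjugate of a bounding pair map is another bounding pair map; so normal generation collapses to ordinary generation as soon as each relator lift can be expressed as a product of separating twists and bounding pair maps.

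The technical core, and the main obstacle, is the case-by-case verification that every symplectic relator in the chosen presentation lifts to an element of the subgroup $\langle \text{separating twists}, \text{bounding pair maps}\rangle$. This is carried out using classical surface-topological identities: the lantern relation to produce separating twists from chains of nonseparating twists, chain relations, star relations, and disjointness-commutation relations, together with direct handle-slide arguments showing that a braid or Weyl-type relator in the transvections lifts to a product of bounding pair differences. Powell's original work bundles these computations with an induction on the genus $g$, treating a small base case by hand and using a Birman-exact-sequence style argument to reduce the general case to it. I would expect to spend essentially all of the proof on this relator-lifting step; the exact sequence set-up and the ``easy direction'' are purely formal.
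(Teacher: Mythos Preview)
The paper does not supply a proof of this theorem at all: it is quoted as Powell's result with a citation to \cite{Powell} and then used as a black box in Section~\ref{Section:Kernels}. So there is no ``paper's own proof'' to compare your proposal against.

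As a proof outline your proposal is broadly faithful to Powell's actual argument: he also works through the symplectic short exact sequence, using a finite presentation of $Sp_{2g}(\Z)$ (due to Birman) in transvection generators, and checks relator by relator that each lift lands in the subgroup generated by separating twists and bounding pair maps. Your observation that normal generation collapses to honest generation via Lemma~\ref{lemma:ConjTwists} is correct. However, you have correctly identified that the entire content of the theorem lives in the ``technical core'' step, and you have not carried any of it out; what you have written is a description of the shape of Powell's proof rather than a proof. The relator-lifting computations are genuinely delicate (Powell's original paper contained errors later repaired by Johnson), so a complete argument here would require either reproducing those computations or citing them, which is exactly what the paper under review chose to do.
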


Also recall that $ker(\tau_1)$ is generated by separating twists, which we denote $S'$. We may generate $\I_{g,1}$ by the set $S''$ containing $S'$ and a collection of bounding pair maps whose image under $\tau_1$ generates $image(\tau_1) \cong \bigwedge^3\Z^{2g}$. Johnson provides the following lemma as a corollary to \cite{Johnson2}, Theorem 4A.

\begin{lem}[Johnson, \cite{Johnson2}]\label{GenImageTorelli}
For $g\geq 3$, $\I_{g,1}/\K_{g,1}$ is generated by $2g \choose 3$ bounding pair maps.
\end{lem}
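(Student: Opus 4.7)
The plan is to use the fact, immediate from Johnson's theorem and $\K_{g,1}=\ker(\tau_1)$, that $\tau_1$ descends to an isomorphism $\I_{g,1}/\K_{g,1}\xrightarrow{\cong}\bigwedge^3 H$, where $\bigwedge^3 H$ is a free abelian group of rank $\binom{2g}{3}$. Hence it suffices to exhibit $\binom{2g}{3}$ bounding pair maps whose $\tau_1$-images form a $\Z$-basis of $\bigwedge^3 H$; their cosets in $\I_{g,1}/\K_{g,1}$ then form a $\Z$-basis and in particular generate.

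To produce such a family, I would fix a symplectic basis $\{a_1, b_1,\ldots, a_g, b_g\}$ of $H$, which gives a basis of $\bigwedge^3 H$ indexed by three-element subsets $T$ of this set. The key computational tool is the standard formula $\tau_1(f_{a,b}) = [a]\wedge\omega_{\Sigma'}$ for a bounding pair $(a,b)$ cobounding a subsurface $\Sigma'\subset\Sigma_{g,1}$, where $\omega_{\Sigma'}\in\bigwedge^2 H$ is the image of the symplectic form on $H_1(\Sigma')$; for $\Sigma'$ of genus one with symplectic basis $\{x,y\}$ this reduces to $[a]\wedge x\wedge y$. For each triple of the form $T=\{e, a_i, b_i\}$ containing a symplectic pair, I would realize the basis element $e\wedge a_i\wedge b_i$ directly by taking $\alpha_T, \beta_T$ to be two disjoint simple closed curves representing $[e]$ that cobound a genus-one subsurface around the $i$-th handle.

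The main obstacle is triples $T=\{e_1,e_2,e_3\}$ containing no symplectic pair, such as $\{a_1, a_2, a_3\}$, for which no genus-one bounding pair realizes the corresponding basis element on the nose (since a symplectic basis of a genus-one subsurface is a symplectic pair in the ambient $H$). I would handle these via a change-of-basis trick: replace one of the $e_i$ in $T$ by an integer sum with another basis element so as to restore a symplectic pair, construct a genus-one BP for the modified triple, and use bilinearity of $\wedge$ together with previously-constructed BPs for simpler triples to extract the desired basis element as a $\Z$-linear combination. Ordering the triples lexicographically so that the resulting $\binom{2g}{3}\times\binom{2g}{3}$ transition matrix is upper triangular with $\pm 1$ on the diagonal would complete the proof; the bookkeeping required to achieve this unimodular form is where the actual content of the argument lies.
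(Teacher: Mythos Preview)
The paper does not actually prove this lemma: it is stated as a corollary to Theorem~4A of Johnson's paper \cite{Johnson2} and is simply cited, with no argument given. So there is no ``paper's own proof'' to compare against.

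Your outline is essentially Johnson's original argument. The reduction to producing $\binom{2g}{3}$ BP maps whose $\tau_1$-images form a $\Z$-basis of $\bigwedge^3 H$ is exactly right, and your formula $\tau_1(f_{a,b})=[a]\wedge\omega_{\Sigma'}$ is the standard one. Your diagnosis of the obstruction is also correct: for a genus-one BP the two interior basis classes $x,y$ satisfy $x\cdot y=1$ while both are orthogonal to $[a]$, so a single genus-one BP cannot hit a wedge like $a_1\wedge a_2\wedge a_3$ directly. Your proposed fix---perturb one entry to reintroduce a symplectic pair (e.g.\ realize $a_1\wedge a_2\wedge(a_3+b_2)$ by a genus-one BP and subtract the already-handled $a_1\wedge a_2\wedge b_2$)---is exactly how Johnson proceeds, though he organizes it via a particular ``chain'' basis rather than the standard symplectic one. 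Two points to flag: first, you should say a word about why the required genus-one subsurfaces with prescribed boundary class and prescribed interior symplectic pair actually exist in $\Sigma_{g,1}$ (this is where $g\geq 3$ is used); second, as you yourself note, the unimodularity bookkeeping is the real content and you have not carried it out. As a sketch the proposal is sound; as a proof it is incomplete in the places you already identified.
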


So by the above lemma we may choose $S''$ to contain $S'$ and only $2g \choose 3$ bounding pair maps. We may then obtain a generating set $S$ for $Mod_{g,1}[p]$ that contains $S''$ and the set of $p^{th}$-powers of Dehn twists, which we denote by $D_p$. We have calculated the image of $S$ under $\tau_1^*$ in Section \ref{Section:Images}, and so we may use the general argument outlined by Lemma \ref{lemma:GeneratingKernels} to obtain a normal generating set for the subgroups $\I_{g,1}^*(2) < Mod_{g,1}[p]$. In particular, we apply the procedure to the following short exact sequence for $* = S$ or $Z$:

\[
1 \longrightarrow \I_{g,1}^*(2) \longrightarrow Mod_{g,1}[p] \longrightarrow image(\tau_1^*) \longrightarrow 1
\]

The presentation for $image(\tau_1^*)$ uses the generating set $\tau_1(S)$ with some relators $R$. We must determine these relators and choose a lift of each to a word in $S$. In so doing we obtain a normal generating set for $\I_{g,1}^*(2)$. The normal generating sets are actually honest generating sets. 
In order to see this recall Lemma \ref{lemma:ConjTwists}, which states that conjugates of Dehn twists in $Mod_{g,1}$ are Dehn twists.

\begin{thm}
For $g\geq 3$, $\I_{g,1}^Z(2) = ker(\tau_1^Z)$ is generated by the group $\K = ker(\tau_1)$, which is generated by separating twists, and $D_p$, which is generated by $p^{th}$-powers of Dehn twists.
\end{thm}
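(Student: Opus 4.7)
The plan is to apply Lemma~\ref{lemma:GeneratingKernels} to the short exact sequence
\[
1 \longrightarrow \I_{g,1}^Z(2) \longrightarrow Mod_{g,1}[p] \stackrel{\tau_1^Z}{\longrightarrow} \bigwedge^3\F_p^{2g} \longrightarrow 1
\]
using the refined generating set $S = S' \cup B \cup D_p$ for $Mod_{g,1}[p]$ assembled in the discussion preceding the theorem, where $S'$ is the set of separating twists, $B$ is the set of $\binom{2g}{3}$ bounding pair maps supplied by Lemma~\ref{GenImageTorelli}, and $D_p$ is the set of $p^{th}$-powers of Dehn twists.

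The calculations of Section~\ref{Section:Images} show that separating twists (being members of $\K=\ker(\tau_1)$) and $p^{th}$-powers of Dehn twists both map trivially under $\tau_1^Z$, while the bounding pair maps $f_i \in B$ map to a basis of $\bigwedge^3\F_p^{2g}$. A presentation of $\bigwedge^3\F_p^{2g}$ on this basis requires exactly two families of relators, namely the commutators $[\tau_1^Z(f_i),\tau_1^Z(f_j)]$ and the $p^{th}$-powers $\tau_1^Z(f_i)^p$, so Lemma~\ref{lemma:GeneratingKernels} tells us that $\I_{g,1}^Z(2)$ is normally generated in $Mod_{g,1}[p]$ by $S' \cup D_p$ together with the lifted relators $[f_i,f_j]$ and $f_i^p$.

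The heart of the argument is to show that each lifted relator already lies in the subgroup $\langle S' \cup D_p \rangle$. For the commutators, $f_i,f_j \in \I_{g,1}=\I_{g,1}(1)$, so Andreadakis's theorem gives $[f_i,f_j] \in [\I_{g,1}(1),\I_{g,1}(1)] < \I_{g,1}(2)=\K$, which Johnson showed is generated by separating twists. For the $p^{th}$-powers, writing $f_i = T_aT_b^{-1}$ with $a,b$ disjoint homologous curves, the Dehn twists $T_a$ and $T_b$ commute, and hence $f_i^p = T_a^p T_b^{-p}$ is a product of two elements of $D_p$.

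Finally, Lemma~\ref{lemma:GeneratingKernels} yields only a \emph{normal} generating set in $Mod_{g,1}[p]$, but Lemma~\ref{lemma:ConjTwists} shows that any conjugate of a Dehn twist is another Dehn twist, and separatingness of the defining curve is preserved under conjugation, so enlarging $S'$ and $D_p$ to all separating twists and all $p^{th}$-powers of Dehn twists subsumes the normal closure. I anticipate no substantive obstacle; the only points where care is required are in invoking Andreadakis's inclusion at precisely $k=l=1$ to land in $\K$ (rather than in some larger filtration term), and in recognising that the disjointness built into the definition of a bounding pair is exactly what makes the identity $(T_aT_b^{-1})^p = T_a^pT_b^{-p}$ hold.
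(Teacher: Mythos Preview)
Your proposal is correct and follows essentially the same route as the paper: apply Lemma~\ref{lemma:GeneratingKernels} to the short exact sequence for $\tau_1^Z$ using the generating set $S' \cup B \cup D_p$, list the relators (trivially-mapping generators plus commutators and $p^{th}$-powers among the bounding pair images), lift them, and then upgrade normal generation to honest generation via Lemma~\ref{lemma:ConjTwists}. The only cosmetic difference is that the paper handles the commutator lift $[f_{a,b},f_{d,e}]$ by observing directly that it lies in $\ker(\tau_1)=\K_{g,1}$ (since $\tau_1$ lands in an abelian group), whereas you cite Andreadakis's inclusion $[\I_{g,1}(1),\I_{g,1}(1)]<\I_{g,1}(2)$; these are the same fact.
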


\begin{proof}
We have the short exact sequence:

\[
1 \rightarrow \ker(\tau_1^Z) \rightarrow Mod_{g,1}[p] \rightarrow \bigwedge^3(\F_p^{2g}) \rightarrow 1
\]

\noindent Let $S$ be the generating set for $Mod_{g,1}[p]$ discussed above. That is, $S$ contains separating twists, $p^{th}$ powers of Dehn twists, and $2g \choose 3$ bounding pair maps. We may obtain a group presentation for $\bigwedge^3(\F_p^{2g})$ with generating set $\tau_1^Z(S)$ and relations given below. The relators in this presentation lift to 
words in $S \subset Mod_{g,1}[p]$, which give a normal generating set for $ker(\tau_1^Z)$ by 
Lemma \ref{lemma:GeneratingKernels}.

We now list the relations in our presentation for $\bigwedge^3(\F_p^{2g})$. We have seen that $\tau_1^Z(D_p) = 0$ so take each generator of the form $\tau_1^Z(T_{\gamma}^p)$ to be a relator. Recall $\tau_1^*|_{{\I}_{g,1}} = \tau_1\text{ mod }p$. The image of the separating twists are trivial since $\tau_1^Z(\K_{g,1}) = \tau_1(\K_{g,1})\text{ mod }p = 0$. So take each generator of the form $\tau_1^Z(T_c)$, where $c$ is separating, to be a relator.

Now we need only consider the relations among the images of the bounding pair maps $f_{a,b} = T_aT_b^{-1}$ in $S$. Since $\tau_1^Z(f_{a,b}) = \tau_1(f_{a,b})\text{ mod }p$, Lemma \ref{GenImageTorelli} tells us that the collection of $\tau_1^Z(f_{a,b})$ is a generating set for $\bigwedge^3 \F_p^{2g}$ with $2g \choose 3$ elements. Because $\bigwedge^3 \F_p^{2g} \cong \F_p^{2g\choose 3}$ we have the following relations among generators of the form $\tau_1^Z(f_{a,b})$:
\begin{itemize}
\item $[\tau_1^Z(f_{a,b}),\tau_1^Z(f_{d,e})]$
\item $\tau_1^Z(f_{a,b})^p$
\end{itemize}
Our complete list of relators in our presentation for $\bigwedge^3 \F_p^{2g}$ with generating set $\tau_1^Z(S)$ is as follows:
\begin{itemize}
\item $\tau_1^Z(T_{\gamma}^p)$
\item $\tau_1^Z(T_c)$, where $c$ is separating
\item $[\tau_1^Z(f_{a,b}),\tau_1^Z(f_{d,e})]$
\item $\tau_1^Z(f_{a,b})^p$
\end{itemize}
\noindent The relators $\tau_1^Z(T_{\gamma}^p)$ lift to $p^{th}$-powers of Dehn twists. The relators $\tau_1^Z(T_c)$ lift to separating twists. The relators $[\tau_1^Z(f_{a,b}),\tau_1^Z(f_{d,e})]$ lift to elements of $\K_{g,1}$, which is generated by separating twists. The relators $\tau_1^Z(f_{a,b})^p$ lift to $p^{th}$-powers of the bounding pair maps $f_{a,b}$. Note that $(T_aT_b^{-1})^p = T_a^pT_b^{-p}$, which is a product of $p^{th}$-powers of Dehn twists, since $a$ and $b$ are disjoint so that $T_a$ and $T_b$ commute. Therefore, $\I_{g,1}^Z(2)$ is normally generated by separating twists and $p^{th}$-powers of Dehn twists.

We note that the identity of Lemma \ref{lemma:ConjTwists} $fT_c^pf^{-1} = T_{f(c)}^p$ shows $\left< D_p\right>$ to be normal in $Mod_{g,1}[p]$. Similarly a conjugate of a separating twist is a separating twist. It follows that in fact $ker(\tau_1^Z) = \I_{g,1}^Z(2)$ is generated by separating twists and $p^{th}$-powers of Dehn twists.\qedhere
\end{proof}

\begin{thm}
For $g\geq 3$, $\I_{g,1}^S(2) = ker(\tau_1^S)$ is generated by separating twists, $p^{th}$-powers of bounding pair maps, and $p^2$-powers of Dehn twists.
\end{thm}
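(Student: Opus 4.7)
The plan is to parallel the Zassenhaus case, applying Lemma \ref{lemma:GeneratingKernels} to the short exact sequence
\[
1 \longrightarrow \ker(\tau_1^S) \longrightarrow Mod_{g,1}[p] \longrightarrow image(\tau_1^S) \longrightarrow 1,
\]
with the same generating set $S$ for $Mod_{g,1}[p]$ used in the Zassenhaus proof: separating twists, $\binom{2g}{3}$ bounding pair maps $f_{a,b}$, and the set $D_p$ of $p^{th}$-powers of Dehn twists. The new feature compared to the Zassenhaus proof is that by Theorem \ref{Theorem:StallingsImage}, $image(\tau_1^S) \cong \bigwedge^3\F_p^{2g} \oplus \mathfrak{sp}_{2g}(\F_p)$, and the $\tau_1^S(T_\gamma^p)$ are no longer trivial; they generate the $\mathfrak{sp}_{2g}(\F_p)$ summand via the commutative diagram at the end of Section \ref{Section:Images}. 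Thus there are genuine new relators involving the images of $p^{th}$-power Dehn twists that must be analyzed.

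I would organize the relators of $image(\tau_1^S)$ on the generating set $\tau_1^S(S)$ into three types and lift each. First, the relators $\tau_1^S(T_c) = 0$ lift to separating twists. Second, the $p^{th}$-power relators $\tau_1^S(f_{a,b})^p$ and $\tau_1^S(T_\gamma^p)^p$ lift to $f_{a,b}^p = T_a^pT_b^{-p}$ and $T_\gamma^{p^2}$ respectively, giving $p^{th}$-powers of bounding pair maps and $p^2$-powers of Dehn twists. Third, the commutator relators $[\tau_1^S(x), \tau_1^S(y)]$, together with any additional linear dependencies among the $\tau_1^S(T_\gamma^p)$'s inside $\mathfrak{sp}_{2g}(\F_p)$. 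Commutators lying entirely among the bounding pair maps and separating twists lift into $\K_{g,1}$, and hence are generated by separating twists by Johnson's theorem.

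The main obstacle is the third family when it involves $p^{th}$-power Dehn twists. My plan is to project each such relator $C$ via the symplectic representation $\Psi$. By Theorem \ref{ModPAbel}, $[Sp_{2g}(\Z)[p], Sp_{2g}(\Z)[p]] = Sp_{2g}(\Z)[p^2]$, so $\Psi(C) \in Sp_{2g}(\Z)[p^2]$. The Bass--Milnor--Serre result underlying Theorem \ref{SpGenerators}, applied at level $p^2$, gives a normal generating set for $Sp_{2g}(\Z)[p^2]$ by the matrices $M_{i,j}^p$ and $N_{i,j}^p$, which are simply the $M, N$ matrices with $p$ replaced by $p^2$ and hence lift to products of $p^2$-powers of Dehn twists as in Section \ref{Section:Images}. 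We can thus write $C = P \cdot R$ with $P$ a product of $p^2$-power Dehn twists and $R \in \I_{g,1}$. Since $C \in \ker(\tau_1^S)$ and $\tau_1^S(P) = 0$ (as $p$ annihilates $\mathfrak{sp}_{2g}(\F_p)$), we have $\tau_1(R) \equiv 0 \pmod{p}$ and so $\tau_1(R) \in p\bigwedge^3 H$. Since $\tau_1(f_{a,b}^p) = p\,\tau_1(f_{a,b})$ and the $\tau_1(f_{a,b})$ generate $\bigwedge^3 H$, a suitable product $Q$ of $p^{th}$-powers of bounding pair maps satisfies $\tau_1(RQ^{-1}) = 0$, so that $RQ^{-1} \in \K_{g,1}$ and is a product of separating twists. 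This expresses $C$ as a product of elements from the asserted generating set.

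Finally, one verifies that the subgroup $K$ generated by separating twists, $p^{th}$-powers of bounding pair maps, and $p^2$-powers of Dehn twists is normal in $Mod_{g,1}[p]$: Lemma \ref{lemma:ConjTwists} handles separating twists and $p^2$-power Dehn twists, while the identity $f(T_aT_b^{-1})f^{-1} = T_{f(a)}T_{f(b)}^{-1}$ shows $p^{th}$-powers of bounding pair maps conjugate to $p^{th}$-powers of bounding pair maps, as $f(a)$ and $f(b)$ remain disjoint and homologous. This upgrades the normal generating set produced by Lemma \ref{lemma:GeneratingKernels} to an honest generating set, completing the proof.
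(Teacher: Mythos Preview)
Your proposal is correct and follows essentially the same strategy as the paper: apply Lemma \ref{lemma:GeneratingKernels} to the short exact sequence with the same generating set $S$, lift each relator type, and upgrade from a normal generating set to an honest one via Lemma \ref{lemma:ConjTwists}.

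The one organizational difference worth noting is in how you handle the relators involving $D_p$. The paper splits these into several explicit subcases (commutators $[\tau_1^S(f_{a,b}),\tau_1^S(T_\gamma^p)]$, $p^{th}$-powers $\tau_1^S(T_\gamma^p)^p$, and the remaining relations presenting $\mathfrak{sp}_{2g}(\F_p)$, the latter further divided into relations of $Sp_{2g}(\Z)[p]$ versus generators of $Sp_{2g}(\Z)[p^2]$). You instead treat them uniformly: any lifted relator $C$ lies in $\ker(\tau_1^S)$, hence by the commutative square at the end of Section \ref{Section:Images} one has $abel\circ\Psi(C)=0$, i.e.\ $\Psi(C)\in Sp_{2g}(\Z)[p^2]$; then peel off a $D_{p^2}$ factor and reduce the remainder $R\in\I_{g,1}$ via $\tau_1(R)\in p\bigwedge^3 H$. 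This is slightly cleaner than the paper's case analysis, and in fact sidesteps a small imprecision there (the paper asserts that $[f_{a,b},T_\gamma^p]$ ``lifts to a $p^{th}$-power of a bounding pair map,'' but the curves $f_{a,b}(\gamma)$ and $\gamma$ need not be disjoint, so this requires exactly the decomposition you give). Two minor points: your justification ``$[Sp_{2g}(\Z)[p],Sp_{2g}(\Z)[p]]=Sp_{2g}(\Z)[p^2]$'' literally covers only the commutator relators; for the linear dependencies you should invoke $\ker(abel)=Sp_{2g}(\Z)[p^2]$ directly (same Theorem \ref{ModPAbel}). And the reason $\tau_1^S(P)=0$ for $P\in\langle D_{p^2}\rangle$ is that $\G^{p^2}<\G_3^S$, as noted in Section \ref{Section:Images}, rather than ``$p$ annihilates $\mathfrak{sp}_{2g}(\F_p)$.''
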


\begin{proof}
This proof is similar to the previous one for the Zassenhaus case except that we no longer have 
$D_p < \I_{g,1}^S(2)$. Again we take $S$ as defined above to be our generating set for $Mod_{g,1}[p]$ and $\tau_1^S(S)$ to be the generating set for a presentation of $image(\tau_1^S) \cong \bigwedge^3\F_p^{2g}\oplus \mathfrak{sp}_{2g}(\F_p)$. For the same reasons as in the case of Zassenhaus the generators $\tau_1^S(T_c)$ are relators when $c$ is separating. Again the images of the bounding pair maps $f_{a,b}$ generate the $\bigwedge^3\F_p^{2g}$ summand of $image(\tau_1^S)$, and we have the following relations among them:
\begin{itemize}
\item $[\tau_1^S(f_{a,b}),\tau_1^S(f_{d,e})]$
\item $\tau_1^S(f_{a,b})^p$
\end{itemize}

We now need to consider the relations among generators of the form $\tau_1^S(T_{\gamma}^p)\in \tau_1^S(D_p)$. We are interested in the case when $f \in D_p$ has trivial image in $\mathfrak{sp}_{2g}(\F_p)$. Recall $\Psi:Mod_{g,1}[p]\longrightarrow Sp_{2g}(\Z)[p]$ denotes the symplectic representation. Recall that $abel$ denotes the abelianization map of $Sp_{2g}(\Z)[p]$. Finally, recall $r\circ\tau_1^S(f) = abel\circ\Psi(f)$, where $r:Hom(\F_p^{2g},\el_2^S)\longrightarrow Hom(\F_p^{2g},\F_p^{2g})$. If $f\in ker(\Psi)$ then $f \in \I_{g,1}$. In this case, relations among $\tau_1^S(f)$ are already described above. It then suffices to consider the case when $f \in D_p$ and not an element of $\I_{g,1}$. If $\tau_1^S(f) = 0$ for such an $f$ then $\Psi(f) \in ker(abel)$. By Lemma \ref{ModPAbel} this implies that $\Psi(f) \in Sp_{2g}(\Z)[p^2]$. Hence, $\Psi(f)$ has a lift to $Mod_{g,1}[p]$ that is in $\left< D_{{p}^2}\right>$. Any two lifts of $\Psi(f)$ differ by multiplication with elements of $\I_{g,1} = ker(\Psi)$.

We may then take the following set of relations in the generating set $\tau_1^S(S)$ for our presentation of $image(\tau_1^S)$:
\begin{itemize}
\item $\tau_1^S(T_c)$, where $c$ is separating
\item $[\tau_1^S(f_{a,b}),\tau_1^S(f_{d,e})]$
\item $\tau_1^S(f_{a,b})^p$
\item $\tau_1^S(T_{\gamma}^p)^p$
\item $[\tau_1^S(f_{a,b}),\tau_1^S(T_{\gamma}^p)]$
\item Relations in $\tau_1^S(T_{\gamma}^p)$ sufficient to present the $\mathfrak{sp}_{2g}(\F_p)$ summand of the image of $\tau_1^S$
\end{itemize}
\noindent The first relators lift to separating twists. The second lift to elements of $\K_{g,1}$, which in turn is generated by separating twists. The third lift to $p^{th}$-powers of Dehn twists. The fourth lift to $p^{2}$-powers of Dehn twists. The fifth relations in the list lift to $p^{th}$-powers of bounding pair maps.

Now we consider the final relations. Recall $\mathfrak{sp}_{2g}(\F_p) \cong Sp_{2g}(\Z)[p]/Sp_{2g}(\Z)[p^2]$. Our generating set for $\mathfrak{sp}_{2g}(\F_p)$ consists of elements of the form $\tau_1^S(T_{\gamma}^p)$. There are two types of relations: the relations necessary to present $Sp_{2g}(\Z)[p]$ and the relations necessary to generate $Sp_{2g}(\Z)[p^2]$. Relations of the second kind lift by $\tau_1^S$ to words in $D_{p^2}$. For the sake of this proof we need not know the exact words since we have already taken all of $D_{p^2}$ to be in our generating set for $ker(\tau_1^S)$. Relations of the first kind lift by $\tau_1^S$ to a word $u$ in $D_p$ that is also an element of $\I_{g,1}$. Such an element $u$ satisfies $\tau_1^Z(u) = 0$ since $\tau_1^Z(D_p) = 0$, and $\tau_1^Z(u) = \tau_1(u)\text{ mod } p$ since $u \in \I_{g,1}$. Therefore, either $u\in ker(\tau_1)$, in which case it can be written as a product of separating twists, or $u$ is a product of $p^{th}$-powers of bounding pair maps.

Hence, $\I_{g,1}^S(2)$ is normally generated by separating twists, $p^{th}$-powers of bounding pair maps, and $p^2$-powers of Dehn twists. Again Lemma \ref{lemma:ConjTwists} shows this to actually be an honest generating set.\qedhere
\end{proof}

%%%%%%%%%%%%%%%%%%%%%%%%%%%%%%%%%%%%%%%%%%%%%%%%%%%%%
%%%%%%%%%%%%%%%%%%%%%%%%%%%%%%%%%%%%%%%%%%%%%%%%%%%%%

\section{Rational homology spheres}\label{Section:QHS}

Let $i:\Sigma_{g}\longrightarrow \mathbf{S}^3$ be a genus $g$ Heegaard surface in $\mathbf{S}^3$. We may obtain a 3-manifold by cutting $\mathbf{S}^3$ along $i(\Sigma_g)$ and regluing the resulting handlebodies $V$ and $W$ along their boundaries by an element $f\in Mod_{g,1}$. Here we think of the boundary component of $\Sigma_{g,1}$ as an embedded disk fixed by $f$. In this way we may think of $f$ as a mapping class of $\Sigma_g$ as well. This is done explicitly in the following way. We identify $V$ and $W$ with the standard handlebody $H_g$ via diffeomorphisms $f_V$ and $f_W$. Let $\phi$ be an orientation reversing diffeomorphism of $H_g$. Let $\overline{H}_g$ denote $\phi(H_g)$. We define our 3-manifold by $M_f = H_g\cup_{\phi\circ f} \overline{H}_g$. The gluing is done on the boundaries of our handlebodies by gluing $f_V(x) \in \partial V$ to $f_W\circ \phi\circ f(x) \in \partial \overline(W)$. We call $f \in Mod_{g,1}$ the Heegaard gluing map. We note that if $f$ is an element of $Mod_{g,1}[p]$ then $M_f$ is a rational homology 3-sphere (QHS). If $f$ is an element of $\I_{g,1}$ then $M_f$ is an integral homology 3-sphere (ZHS). 

Associated to a Heegaard splitting are two Lagrangian subspaces of $H_1(i(\Sigma_g))$, which
are defined as:
\begin{align*}
L_V &= ker(H_1(\partial V) \longrightarrow H_1(V)) \\
L_W &= ker(H_1(\partial W) \longrightarrow H_1(W))
\end{align*}
\noindent Here the maps are induced by the inclusions of the boundaries into the handlebodies. Note that $\partial H_g = \Sigma_g$ so that $H_1(\partial V) \cong H_1(\Sigma_g) \cong H_1(\partial W)$ by the diffeomorphisms $f_V$ and $f_W$.

We say that two gluing maps $f$ and $f'$ are equivalent if $f' = h_1\circ f \circ h_2$ for some 
$h_i \in Mod_{g,1}$ such that $h_1$ extends to a diffeomorphism $\overline{h}_1$ of the handlebody $V$ and $h_2$ extends to a 
diffeomorphism $\overline{h}_2$ of $W$. This means there is a diffeomorphism $F:M_f\longrightarrow M_{f'}$ that restricts to a diffeomorphism on each of the handlebodies. Namely, $F|_V = \overline{h}_1$ and $F|_W = \overline{h}_2$. By work of Birman we have a nice characterization of mapping classes $h$ that extend over 
handlebodies.

\begin{thm}[Birman, {\cite{Birman}}]\label{theorem:ExtendsOverBody}
Let $h \in Mod_{g}$, and suppose $h$ extends over the handlebody $V$. Let $\{a_1,...,a_g,b_1,...,b_g\}$ be a symplectic basis for $H_1(\Sigma_g)$ with representatives of $a_i$ bounding disks in $V$ and $b_i$ bounding disks in $W$. Then the image of $h$ in $Sp_{2g}(\Z)$ under the symplectic representation $\Psi$ is a matrix with $g \times g$ block form:

\[
\left( \begin{array}{cc}
* & * \\
0 & * \end{array} \right)
\]

\noindent If instead, $h$ extends over $W$ then $\Psi(h)$ is of the form:

\[
\left( \begin{array}{cc}
* & 0 \\
* & * \end{array} \right)
\]

\noindent That is, $\Psi(h)$ preserves one of the Lagrangian subspaces $L_V$ or $L_W$, respectively. 
\end{thm}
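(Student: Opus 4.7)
The approach I plan to use is pure functoriality of $H_1$. If $h\in Mod_g$ extends to a self-homeomorphism $\bar h$ of $V$, then the inclusion $\iota\colon \partial V \hookrightarrow V$ satisfies $\iota\circ h = \bar h\circ \iota$, so after applying $H_1$ we obtain a commutative square
\[
\begin{CD}
H_1(\partial V) @>h_*>> H_1(\partial V)\\
@V\iota_* VV @VV \iota_* V\\
H_1(V) @>\bar h_*>> H_1(V).
\end{CD}
\]
For any $x\in L_V = \ker \iota_*$ we then have $\iota_*(h_*(x)) = \bar h_*(\iota_*(x)) = 0$, so $h_*(x)\in L_V$. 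Thus $\Psi(h)$ preserves $L_V$. The case of $W$ is identical after swapping the roles, giving that $\Psi(h)$ preserves $L_W$ when $h$ extends over $W$.

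The second step is to translate "$\Psi(h)$ preserves $L_V$" into the claimed block form. For this I would identify $L_V$ with the span of $\{[a_1],\ldots,[a_g]\}$ under the chosen symplectic basis. The containment $\mathrm{span}\{[a_i]\} \subset L_V$ is immediate from the hypothesis that each $a_i$ bounds a disk in $V$. For the reverse containment, the key fact is that $\iota_*\colon H_1(\partial V)\to H_1(V)$ is surjective, which follows from $V$ being a handlebody of genus $g$ with $H_1(V)\cong \Z^g$ generated (after identification via $f_V$) by the homology classes of the longitudes $b_i$. A rank count then forces $L_V$ to be exactly the rank $g$ summand spanned by the $[a_i]$. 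With respect to the ordered basis $\{a_1,\ldots,a_g,b_1,\ldots,b_g\}$, preservation of this summand is precisely the vanishing of the lower-left $g\times g$ block, yielding
\[
\Psi(h) = \begin{pmatrix} * & * \\ 0 & * \end{pmatrix}.
\]
The $W$ case produces the transposed-looking block form because $L_W = \mathrm{span}\{[b_i]\}$ corresponds instead to the second half of the basis.

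I do not expect any serious obstacle here; the argument is essentially diagram-chasing plus a rank computation. The one place to be a bit careful is the identification $L_V = \mathrm{span}\{[a_i]\}$: one needs that the symplectic basis chosen is genuinely adapted to the Heegaard splitting (which is exactly the hypothesis on the $a_i,b_i$) and that $\iota_*$ is surjective, which is a standard property of handlebodies. Everything else is forced by the commutative square above.
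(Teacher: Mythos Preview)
Your argument is correct and is the standard one. Note, however, that the paper does not supply its own proof of this theorem: it is stated with attribution to Birman \cite{Birman} and used as a black box, so there is no in-paper proof to compare against. The functoriality-plus-rank-count argument you give is essentially the canonical justification, and your care in showing $L_V = \mathrm{span}\{[a_i]\}$ (via surjectivity of $\iota_*$ and the resulting isomorphism $\Z^{2g}/\mathrm{span}\{[a_i]\}\to \Z^{2g}/L_V$) is exactly what is needed.
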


Consider the case when $f \in Mod_{g,1}[p]$. We have from the Mayer-Vietoris sequence afforded by this Heegaard splitting of $M$ that:

\[
H_1(M) \cong \frac{H_1(\Sigma_{g,1})}{\left\langle f(L_V), L_W \right\rangle}
\]

\noindent Since $M$ is a QHS we know that $|H_1(M)| = n < \infty $. We note that since $M$ is connected, the universal 
coefficient theorem gives us that $H_1(M;\F_p) = H_1(M) \otimes \F_p$. In particular, if $p$ is a prime such 
that $(p,n) = 1$ then $H_1(M;\F_p) = 0$. Hence, 
$H_1(\Sigma_{g,1};\F_p) \cong \left\langle L_W, f(L_V) \right\rangle \otimes \F_p$.

\begin{thm}
Let $M$ be a rational homology 3-sphere. Let $n = |H_1(M)|$. Then for any prime $p$ relatively prime to $n$ there is a mapping class $f \in Mod_{g,1}[p]$ for some $g$ such that $M = V \cup_f W$ is a Heegaard splitting for $M$.
\end{thm}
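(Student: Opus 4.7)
The plan is to begin with any Heegaard splitting of $M$ and modify the gluing map by composition with handlebody-extending mapping classes, which do not change $M$ up to diffeomorphism, until the resulting gluing map acts trivially on $H_1(\Sigma_g;\F_p)$. Every closed orientable 3-manifold has a Heegaard splitting, so choose one, $M = V \cup_{f_0} W$, of some genus $g$. Applying the Mayer-Vietoris computation in the discussion just before the theorem, the hypothesis $p \nmid |H_1(M)|$ forces $H_1(M;\F_p)=0$, hence $f_0(L_V)+L_W = H_1(\Sigma_g;\F_p)$. With respect to a symplectic basis adapted to the splitting as in Theorem \ref{theorem:ExtendsOverBody}, writing $\Psi(f_0)=\begin{pmatrix}P & Q\\ R & S\end{pmatrix}$ in $g\times g$ blocks, this surjectivity is precisely the statement that the top-left block $P$ is invertible modulo $p$.

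The manifold $M_{f_0}$ is unchanged when $f_0$ is replaced by $h_1\circ f_0\circ h_2$ with $h_1,h_2$ extending over the two handlebodies. Choosing $h_1$ to extend over $W$ and $h_2$ over $V$, Theorem \ref{theorem:ExtendsOverBody} tells us that $\Psi(h_1)$ is lower block triangular and $\Psi(h_2)$ is upper block triangular. The task then reduces to a factorization in $Sp_{2g}(\F_p)$: produce a symplectic ``LU'' decomposition $\overline{\Psi(f_0)}=\bar L\bar U$ with $\bar L$ lower block triangular and $\bar U$ upper block triangular, and then arrange $h_1,h_2$ so that $\Psi(h_1)\equiv \bar L^{-1}$ and $\Psi(h_2)\equiv \bar U^{-1}$ modulo $p$. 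Using $P$ invertible mod $p$, I would write
\[
\overline{\Psi(f_0)} = \begin{pmatrix} P & 0 \\ R & (P^T)^{-1} \end{pmatrix}\begin{pmatrix} I & P^{-1}Q \\ 0 & I \end{pmatrix}
\]
and check that each factor is symplectic using the standard block identities $P^T R = R^T P$ and $PQ^T = QP^T$ that hold for any symplectic matrix.

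It then remains to lift the two factors to $Sp_{2g}(\Z)$ preserving their block-triangular shapes and to realize these integral lifts as $\Psi(h_1)$ and $\Psi(h_2)$ for mapping classes extending over $W$ and $V$ respectively. The matrix-level lift is routine: each parabolic is a semidirect product of a Levi by a unipotent radical, and reduction mod $p$ is surjective on each factor. I expect the main obstacle to be realizing each integral parabolic element as the symplectic image of a handlebody-extending mapping class, i.e.\ invoking the converse direction of Birman's theorem, which is classical but not recorded in the excerpt. This can be done by exhibiting explicit generators for the mapping class group of the handlebody---Dehn twists about meridian disks, handle slides, and twists of the complementary 1-handles---and verifying that their symplectic images generate the corresponding integral parabolic. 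Once this is in place, setting $f := h_1\circ f_0\circ h_2$ gives $\Psi(f)\equiv I\pmod p$, so $f\in Mod_{g,1}[p]$, while $M_f\cong M$ by construction. The restriction theorem for $\I_{g,1}^Z(3)$ will follow by pushing the same scheme one filtration step further, refining the factorization modulo $\G_3^Z$ rather than modulo $p$.
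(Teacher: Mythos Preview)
Your plan is the paper's plan: exploit invertibility of a $g\times g$ block of $\overline{\Psi(f_0)}$ to produce a block-triangular factorisation over $\F_p$, and then absorb the two factors into handlebody-extending mapping classes on either side. The paper's bookkeeping looks slightly different---it works with $\Psi_p(f^{-1})$, clears the upper-right block with a left upper-\emph{unipotent} correction $\begin{pmatrix}I&B'\\0&I\end{pmatrix}$, and then multiplies by the inverse of the remaining lower-triangular part---but unwinding this gives exactly your LU formula with $P^{-1}Q$ playing the role of $B'$. You are also right that both arguments need the converse of Birman's theorem; the paper invokes it tacitly when it asserts that a block-triangular symplectic matrix ``lifts to a mapping class that extends over one of our handlebodies.''

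There is one concrete error in your lifting paragraph. You write that the integral parabolic surjects onto the mod-$p$ parabolic because ``reduction mod $p$ is surjective on each factor.'' For the unipotent radical this is fine, but for the Levi it is false: the Levi of the Siegel parabolic is $GL_g$, and the image of $GL_g(\Z)$ in $GL_g(\F_p)$ consists only of matrices of determinant $\pm1$. In your factorisation the Levi of $\bar L$ is $P$, and $\det P\in\F_p^\times$ is a priori arbitrary (it is the mod-$p$ reduction of $\pm|H_1(M)|$), so for $p\ge 5$ you cannot in general lift $\bar L$ to an integral lower-triangular symplectic matrix as claimed. The paper's proof is equally silent at exactly this step---it simply asserts the lift exists---so you are not diverging from the paper, but be aware that this is a real gap in the argument as written, not a routine detail.
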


\begin{proof}
Pick a symplectic basis $\{a_1, b_1,...,a_g, b_g\}$ for $H_1(\Sigma_{g,1})$ such that $\{a_i\}$ is also a basis for $L_V$ and $\{b_i\}$ is a basis for $L_W$ as in Theorem \ref{theorem:ExtendsOverBody}. We know there is some Heegaard gluing map $f$ giving the QHS $M$. Let $p$ be as in the hypothesis, and
consider the matrix $\Psi(f)\ \text{mod}\ p = \Psi_p(f) \in Sp_{2g}(\F_p)$. Since $f$ is invertible $f^{-1}$ is 
defined and we may write $\Psi_p(f^{-1})$ as the $g\times g$ block matrix

\[ \left( \begin{array}{cc}
E & F \\
G & H \end{array} \right)\]

\noindent Since $H_1(M;\F_p) = 0$ we have that:
\[
span\{a_i, b_i\} = span\{f(a_i), b_i\} = span\{a_i, f^{-1}(b_i)\}.
\]
\noindent In particular, the map $f^{-1}_b$ on $H_1(\Sigma_{g,1};\F_p)/span\{a_i\}$ must have full rank. It follows that
the block $H$ is invertible. We consider a matrix $X$ of the form:

\[ 
X =  \left( \begin{array}{cc} Id_{g} & B' \\
0 & Id_{g}  \end{array} \right) \left( \begin{array}{cc} E & F \\
G & H  \end{array} \right) =  \left( \begin{array}{cc} E+B'G & F+B'H \\
G & H  \end{array} \right)
\]

We want to choose $B'$ such that $X$ lifts to a mapping class that extends over one of our handlebodies.
This is equivalent to either $F+B'H$ or $G$ being zero by Theorem \ref{theorem:ExtendsOverBody}.
$G$ is given so we must choose $B' = -FH^{-1}$ so that $F+B'H = 0$. This choice of $B'$ is defined since as
noted $H$ is invertible. Furthermore, since $F^TH=H^TF$ by virtue of $\Psi_p(f^{-1})$ being symplectic we have:
\begin{align*}
{B'}^T &= -{(H^{-1})}^TF^T \\
	&=  -{H^{-1}}^TH^TFH^{-1}\\
	&= -FH^{-1} \\
	&= B'
\end{align*}
\noindent Hence, $X$ is indeed a symplectic matrix, and we now have that $X\Psi_p(f)$ lifts to some $f'$ equivalent to $f$ acting trivially on $span\{a_i\}$ in $H_1(\Sigma_{g,1};\F_p)$.

To complete the proof let $Y = (X\Psi(f))^{-1}$. Since $X\Psi(f)$ has its lower left block zero, so does $Y$ 
(else $Id_g = 0_g$). In particular, $Y$ extends over the other of our handlebodies. Now $X\Psi(f)Y$ lifts to a
mapping class that is equivalent to $f$ and acts trivially on all of $H_1(\Sigma_{g,1};\F_p)$ as it is the identity 
matrix modulo $p$.\qedhere
\end{proof}

The case $p = 0$ gives a proof of the following corollary. The result was of course previously known, but to our knowledge a full proof has not yet been available in the literature.

\begin{cor}
If $M$ is an integral homology sphere then we may find an element $f \in \I_{g,1}$ for some $g$ such that $M = V \cup_f W$ is a Heegaard splitting.
\end{cor}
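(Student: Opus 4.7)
The plan is to run the preceding theorem's argument verbatim, but with $\Z$ coefficients in place of $\F_p$ coefficients. Where the earlier proof produced $f' \in Mod_{g,1}[p]$ satisfying $\Psi(f') \equiv Id \pmod{p}$, the integral analogue will yield $\Psi(f') = Id$ on the nose, placing $f'$ in $ker(\Psi) = \I_{g,1}$.

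First I would take any Heegaard splitting $M = V \cup_f W$ of the integral homology sphere $M$ and fix a symplectic basis $\{a_i, b_i\}$ for $H_1(\Sigma_g; \Z)$ adapted to the splitting, with $L_V = \langle a_i\rangle$ and $L_W = \langle b_i\rangle$ as in Birman's theorem. Mayer--Vietoris yields
\[
H_1(M; \Z) \cong \frac{H_1(\Sigma_g; \Z)}{\langle f(L_V), L_W\rangle},
\]
and since $H_1(M; \Z) = 0$ this forces the integral spanning condition $\langle f(L_V), L_W\rangle = H_1(\Sigma_g; \Z)$, not merely after tensoring with $\F_p$.

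Writing $\Psi(f^{-1}) = \bigl(\begin{smallmatrix} E & F \\ G & H \end{smallmatrix}\bigr)$ in $g \times g$ block form, the integral spanning condition implies that the lower-right block $H$ represents an invertible map on $H_1(\Sigma_g; \Z)/\langle a_i\rangle \cong \Z^g$, so $\det H = \pm 1$. From here I would define $B' = -FH^{-1}$, which is now a genuine integer matrix, with $B' = (B')^T$ following from the symplectic identity $F^TH = H^TF$ exactly as in the preceding proof. Then $\bigl(\begin{smallmatrix} Id & B' \\ 0 & Id\end{smallmatrix}\bigr)$ is a symplectic integer matrix lifting through $\Psi$ to a mapping class extending over one handlebody, and the inverse of the resulting product, which has the opposite block triangular form, lifts to a mapping class extending over the other handlebody. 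Composing $f$ with these on the appropriate sides yields an equivalent gluing $f'$ with $\Psi(f') = Id$ exactly, hence $f' \in \I_{g,1}$.

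The sole sensitive point, and thus the principal obstacle, is promoting the invertibility of $H$ from $\F_p$ coefficients to $\Z$ coefficients; the key observation is that the hypothesis $H_1(M) = 0$ leaves no torsion to absorb and so delivers $\det H = \pm 1$ directly. Once the integrality of $B'$ is in hand, every subsequent arithmetic manipulation in the preceding proof is an equation over $\Z$ and transfers without modification.
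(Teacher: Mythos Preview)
Your proposal is correct and follows exactly the paper's approach: the paper proves this corollary in a single sentence, stating that ``the case $p = 0$ gives a proof,'' meaning precisely what you have written out---run the preceding theorem's argument with $\Z$ in place of $\F_p$. Your identification of the only nontrivial point (that $H_1(M;\Z)=0$ forces $\det H = \pm 1$, so $B' = -FH^{-1}$ is an honest integer matrix) is the one thing that needs to be checked, and you have checked it.
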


The above says that the Torelli groups $\I_{g,1}$ suffice to provide the necessary gluing maps to obtain any ZHS. 
Pitsch argues in \cite{Pitsch} that in fact the $\I_{g,1}(3)$ term of the Johnson filtration suffices to construct all ZHSs.
This allows us to offer a similar statement in the case of QHSs.

\begin{thm}
Let $M$ be a rational homology 3-sphere. Let $n = |H_1(M)|$. Then for any prime $p$ relatively prime to $n$ 
there is a mapping class $f \in \I_{g,1}(3)\cdot D_p$ for some $g$ such that $M = V \cup_f W$ is a 
Heegaard splitting for $M$. In particular, for prime $p \geq 5$ it follows that $f\in \I_{g,1}^Z(3)$.
\end{thm}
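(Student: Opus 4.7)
The plan is to combine the preceding QHS theorem with Pitsch's strategy for ZHSs. Apply the preceding theorem first to obtain a Heegaard splitting $M = V \cup_f W$ with $f \in Mod_{g,1}[p]$. By the Lemma in Section \ref{Section:Images}, decompose $f = f_I \cdot f_D$ with $f_I \in \I_{g,1}$ and $f_D$ a product of $p^{th}$-powers of Dehn twists.

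Next, I would modify $f$ by Heegaard-equivalence, namely by pre- and post-composing with mapping classes extending over the respective handlebodies (Theorem \ref{theorem:ExtendsOverBody}), to successively kill the Johnson obstructions to moving the Torelli factor into $\I_{g,1}(3)$. Pitsch's argument for ZHSs relies on the fact that for a ZHS with Torelli gluing $f_I$, the classes $\tau_1(f_I) \in \bigwedge^3 H$ and $\tau_2(f_I) \in \el_3$ can be cancelled by such modifications, perhaps after stabilizing the splitting. In our mixed setting the $D_p$ factor is invisible at the first Johnson level since $\tau_1^Z(D_p) = 0$ (computed in Section \ref{Section:Images}), so the first obstruction is exactly the one handled by Pitsch. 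At the second level, any residual contribution of $f_D$ will be itself a product of $p^{th}$-power twists and can be reabsorbed into the $D_p$ factor. Running the argument yields an equivalent gluing $f' \cdot f_D'$ with $f' \in \I_{g,1}(3)$ and $f_D' \in \langle D_p \rangle$, establishing the first claim.

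For the ``In particular'' statement, it suffices to show each factor lies in $\I_{g,1}^Z(3)$ when $p \geq 5$. The inclusion $\I_{g,1}(3) \subset \I_{g,1}^Z(3)$ follows at once from $\G_4 \subset \G_4^Z$. For $T_\gamma^p \in D_p$, use Lemma \ref{lemma:TwistContributions} to write $T_\gamma^p(x) x^{-1}$ as a product of conjugates of $y^{\pm p}$. By the definition of the Zassenhaus series, $\G^p \subset \G_p^Z$, and $[\G, \G_p^Z] \subset \G_{p+1}^Z$, so for $p \geq 4$ both $y^{\pm p}$ and the commutators picked up by conjugation lie in $\G_4^Z$. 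Hence $T_\gamma^p \in \I_{g,1}^Z(3)$ whenever $p \geq 5$.

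The main obstacle will be executing the Pitsch-style step-by-step killing of Johnson obstructions in the mixed setting: one must check that, at each level, either the $p^{th}$-power factor does not contribute or its contribution can be shuffled back into the $D_p$ piece, and that Heegaard stabilization respects this bookkeeping. The second Johnson level is the most delicate point, since there the $p^{th}$-power twists begin to contribute nontrivially and the interaction with handlebody-extending modifications must be tracked carefully.
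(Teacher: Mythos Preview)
Your decomposition $f = f_I \cdot f_D$ and your treatment of the ``In particular'' clause are fine and match the paper. The difference is in the middle step, and it is a real one: you propose to re-run Pitsch's obstruction-killing procedure on the composite $f$, tracking the $D_p$ factor through both Johnson levels; the paper instead applies Pitsch's theorem as a black box to the Torelli factor $f_I$ alone and then uses the normality of $\langle D_p\rangle$ in $Mod_{g,1}$ to finish in one line.

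Concretely: since $f_I\in\I_{g,1}$, the manifold $M_{f_I}$ is a ZHS, so Pitsch gives handlebody-extending classes $h_1,h_2$ with $f_I = h_1 f_I' h_2^{-1}$ and $f_I'\in\I_{g,1}(3)$. Then
\[
f \;=\; h_1 f_I' h_2^{-1} f_D \;=\; h_1\, f_I'\,(h_2^{-1} f_D h_2)\, h_2^{-1},
\]
and by Lemma~\ref{lemma:ConjTwists} the conjugate $h_2^{-1} f_D h_2$ is again a product of $p^{th}$-powers of Dehn twists. Thus $f$ is Heegaard-equivalent to $f_I'\cdot f_D'$ with $f_I'\in\I_{g,1}(3)$ and $f_D'\in\langle D_p\rangle$. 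This completely bypasses the second-level bookkeeping you flag as the ``main obstacle'': there is no need to analyze $\tau_2$ in the mixed setting at all, nor to worry about how $f_D$ interacts with the modifications, because those modifications act only on $f_I$ and normality absorbs the rest.

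Your route is not obviously wrong, but it leaves genuine work undone (you would have to justify that the handlebody-extending corrections at level~2, which are chosen relative to the \emph{integral} Johnson homomorphism on $f_I$, interact correctly with a factor that is not in $\I_{g,1}$), and that work is unnecessary once you notice the normality trick.
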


\begin{proof}
We are guaranteed that $M$ is as a Heegaard gluing my some $f \in Mod_{g,1}[p]$. We may write 
$f = f_1\circ f_2$ with $f_1\in \I_{g,1}$ and $f_2\in D_p$. By Pitsch's result $f_1$ is equivalent 
to some $f_1' \in \I_{g,1}(3)$. That is $f_1 = h_1f_1'h_2^{-1}$, and so $f = h_1f_1'h_2^{-1}f_2$. 
We rewrite $f$ as $h_1f_1'h_2^{-1}f_2h_2h_2^{-1}$. Since $f_2 \in D_p$ and $D_p$ is a normal subgroup 
of $Mod_{g,1}$ we have that $f$ is equivalent to $f_1'\circ f_2'$ with $f_1'\in\I_{g,1}(3)$ and $f_2'\in D_p$.
As both $\I_{g,1}(3)$ and $D_p$ are subgroups of $\I_{g,1}^Z(3)$ for prime $p \geq 5$, we obtain the final statement.\qedhere
\end{proof}

%%%%%%%%%%%%%%%%%%%%%%%%%%%%%%%%%%%%%%%%%%%%
%%%%%%%%%%%%%%%%%%%%%%%%%%%%%%%%%%%%%%%%%%%%

\end{document}